\documentclass[11pt,letterpaper]{amsart}
\usepackage[foot]{amsaddr}

\usepackage{mathtools}
\usepackage{amsmath}
\usepackage[T1]{fontenc}
\usepackage[latin9]{inputenc}
\usepackage{geometry}
\geometry{verbose,letterpaper,tmargin= 1in,bmargin=1in,lmargin=1in,rmargin=1in}
\usepackage{wrapfig}
\usepackage{multicol}
\usepackage{graphicx}
\usepackage{soul}
\usepackage{xcolor}
\usepackage{amssymb}
\usepackage{placeins}
\usepackage{bbm}
\setcounter{tocdepth}{1}
\usepackage{cite}
\usepackage{caption}
\usepackage{enumerate}
\usepackage{afterpage}
\usepackage{enumitem}
\usepackage{bmpsize}
\usepackage{hyperref}
\usepackage{tabu}
\usepackage{enumitem}
\numberwithin{equation}{section}
\usepackage{stmaryrd}
\usepackage{tikz}
\usetikzlibrary{matrix,graphs,arrows,positioning,calc,decorations.markings,shapes.symbols}

\makeatletter
\renewcommand{\email}[2][]{%
  \ifx\emails\@empty\relax\else{\g@addto@macro\emails{,\space}}\fi%
  \@ifnotempty{#1}{\g@addto@macro\emails{\textrm{(#1)}\space}}%
  \g@addto@macro\emails{#2}%
}
\makeatother


\newtheorem{theorem}{Theorem}[section]
\newtheorem{lemma}[theorem]{Lemma}
\newtheorem{proposition}[theorem]{Proposition}

{ \theoremstyle{definition}
\newtheorem{definition}[theorem]{Definition}}
{ \theoremstyle{remark}
\newtheorem{remark}[theorem]{Remark}}

\newcommand{\ice}{\mathsf{Inter}}

\title{Tightness for interlacing geometric random walk bridges}
\date{\today}
\author{Evgeni Dimitrov}

\begin{document}

\begin{abstract}
We investigate a class of line ensembles whose local structure is described by independent geometric random walk bridges, which have been conditioned to interlace with each other. The latter arise naturally in the context Schur processes, including their versions in a half-space and a finite interval with free or periodic boundary conditions. We show that under one-point tightness of the curves, these line ensembles are tight and any subsequential limit satisfies the Brownian Gibbs property. As an application of our tightness results, we show that sequences of spiked Schur processes, that were recently considered in \cite{ED24a}, converge uniformly over compact sets to the Airy wanderer line ensembles.
\end{abstract}

\maketitle

\tableofcontents

%
%
\section{Introduction and main results}\label{Section1}

%
%
\subsection{Introduction}\label{Section1.1} A {\em Gibbsian line ensemble} is a collection of independent labeled random walks or Brownian motions whose joint law is reweighed by a Radon-Nikodym derivative proportional to the exponential of the sum of local interaction energies between consecutively labeled curves. A simple example of a Gibbsian line ensemble is a collection of independent random walks, conditioned not to cross each other (e.g. level lines of random lozenge or domino tilings). In this case the local energy is infinity or zero depending on whether the crossing occurs or does not. A considerably more involved example (which arises under edge scaling of certain avoiding random walks) is given by the (parabolic) {\em Airy line ensemble}, which is a collection of random continuous functions that are globally parabolic and whose local distribution is described by independent Brownian bridges, conditioned on avoiding each other (this is called the {\em Brownian Gibbs property}). The Airy line ensemble was introduced in \cite{CorHamA} and it forms the foundation of the entire Kardar-Parisi-Zhang (KPZ) universality class through its role in the construction of the {\em Airy sheet} in \cite{DOV22}.

Over the last several years, there has been a substantial interest in finding general conditions that ensure that a sequence of Gibbsian line ensembles is tight, see e.g. \cite{CorHamA, CorHamK, DEA21, DW21, DNV23, S23, W23a, W23b}. In the earliest of these works, \cite{CorHamA}, establishing tightness was an important ingredient in the construction of the Airy line ensemble, namely in showing that this processes has a continuous version. In some of the subsequent works establishing tightness can be seen as an important step towards showing that a sequence of Gibbsian line ensembles converge uniformly over compact sets to the Airy line ensemble, which should be thought of as a stronger notion of convergence than in the finite-dimensional sense. As mentioned in the introduction of \cite{DNV23}, establishing this stronger notion of convergence is not merely cosmetic, but a crucial ingredient in showing that various models converge to the Airy sheet. The earliest result in this direction is \cite{DOV22}, see also \cite{ACH24} and the references therein. 

Showing that a sequence of line ensembles is tight ensures the existence of subsequential limits, and to prove uniform over compacts convergence one has to show that all of these limits are the same, i.e. establish uniqueness. One way to accomplish this is to prove finite-dimensional convergence, which is hard for many models due to the lack of exact formulas. Because of this reason, a general scheme for showing uniqueness of subsequential limits has been to establish that each subsequential limit has to satisfy enough properties that narrows down the list of possible candidates to exactly one. For example, in many of the aforementioned works the authors were able to show that any subsequential limit satisfies the Brownian Gibbs property, enjoyed by the Airy line ensemble. As shown in \cite{DM21}, any line ensemble possessing this property is uniquely characterized by the law of its lowest-indexed curve, and this characterization was instrumental, together with \cite{QS22}, in establishing the convergence of the {\em KPZ line ensemble} to the Airy line ensemble in \cite{W23a}. More recently, \cite{AH23} showed that the Airy line ensemble is essentially the unique line ensemble that possesses the Brownian Gibbs property and whose top curve is globally parabolic. The latter {\em strong characterization} of the Airy line ensemble is what enabled the results in \cite{ACH24}. 

The recent characterization results for Brownian Gibbsian line ensembles have placed even higher emphasis on finding general conditions that ensure that a sequence of line ensembles is tight and that any subsequential limit satisfies the Brownian Gibbs property, and the goal of this paper is to find such conditions for interlacing geometric random walk bridges.\\

In the present paper we deal with discrete line ensembles $\mathfrak{L} = \{L_i\}_{i = 1}^{\infty}$, where $L_i: \mathbb{Z} \rightarrow \mathbb{Z}$ satisfy
\begin{equation}\label{S1E1}
L_i(s) \leq L_{i} (s+1) \mbox{ for all } s \in \mathbb{Z}.
\end{equation}
In words, equation (\ref{S1E1}) says that one can think of $L_i$ as the trajectories of geometric random walkers. In addition, our ensemble $\mathfrak{L}$ satisfies the property that for each $s \in \mathbb{Z}$ the vectors $\mathfrak{L}(s) = (L_1(s), L_2(s), \dots)$ and $\mathfrak{L}(s+1) = (L_1(s+1), L_2(s+1), \dots)$ {\em interlace} in the sense that 
\begin{equation}\label{S1E2}
L_1(s+1) \geq L_1(s) \geq L_2(s+1) \geq L_2(s) \geq L_3(s+1) \geq L_3(s) \geq \cdots,
\end{equation}
and locally the distribution of $\mathfrak{L}$ is uniform subject to the interlacing inequalities in (\ref{S1E2}). The last statement will be made precise in Definition \ref{DefSGP}, and we refer to it as the {\em interlacing Gibbs property}. In words, $\mathfrak{L}$ satisfying the interlacing Gibbs property means that its law is locally described by independent geometric random walkers, which have been conditioned to interlace. By linearly interpolating the points $(s, L_i(s))$ we can view $\mathfrak{L}$ as a continuous line ensemble, see Figure \ref{S1_1}.

\begin{figure}[ht]
	\begin{center}
		\begin{tikzpicture}[scale=0.7]
		\begin{scope}
        \def\r{0.1}
		\draw[dotted, gray] (0,0) grid (8,6);

        \draw[fill = black] (0,1) circle [radius=\r];
        \draw[fill = black] (1,4) circle [radius=\r];
        \draw[fill = black] (2,5) circle [radius=\r];
        \draw[fill = black] (3,5) circle [radius=\r];
        \draw[fill = black] (4,5) circle [radius=\r];
        \draw[fill = black] (5,5) circle [radius=\r];
        \draw[fill = black] (6,6) circle [radius=\r];
        \draw[fill = black] (7,6) circle [radius=\r];
        \draw[fill = black] (8,6) circle [radius=\r];
        \draw[-][black] (0,1) -- (1,4);
        \draw[-][black] (1,4) -- (2,5);
        \draw[-][black] (2,5) -- (3,5);
        \draw[-][black] (3,5) -- (4,5);
        \draw[-][black] (4,5) -- (5,5);
        \draw[-][black] (5,5) -- (6,6);
        \draw[-][black] (6,6) -- (7,6);
        \draw[-][black] (7,6) -- (8,6);

        \draw[fill = black] (0,1) circle [radius=\r];
        \draw[fill = black] (1,1) circle [radius=\r];
        \draw[fill = black] (2,1) circle [radius=\r];
        \draw[fill = black] (3,3) circle [radius=\r];
        \draw[fill = black] (4,3) circle [radius=\r];
        \draw[fill = black] (5,3) circle [radius=\r];
        \draw[fill = black] (6,3) circle [radius=\r];
        \draw[fill = black] (7,4) circle [radius=\r];
        \draw[fill = black] (8,4) circle [radius=\r];
        \draw[-][black] (0,1) -- (1,1);
        \draw[-][black] (1,1) -- (2,1);
        \draw[-][black] (2,1) -- (3,3);
        \draw[-][black] (3,3) -- (4,3);
        \draw[-][black] (4,3) -- (5,3);
        \draw[-][black] (5,3) -- (6,3);
        \draw[-][black] (6,3) -- (7,4);
        \draw[-][black] (7,4) -- (8,4);

        \draw[fill = black] (0,0) circle [radius=\r];
        \draw[fill = black] (1,0) circle [radius=\r];
        \draw[fill = black] (2,1) circle [radius=\r];
        \draw[fill = black] (3,1) circle [radius=\r];
        \draw[fill = black] (4,3) circle [radius=\r];
        \draw[fill = black] (5,3) circle [radius=\r];
        \draw[fill = black] (6,3) circle [radius=\r];
        \draw[fill = black] (7,3) circle [radius=\r];
        \draw[fill = black] (8,3) circle [radius=\r];
        \draw[-][black] (0,0) -- (1,0);
        \draw[-][black] (1,0) -- (2,1);
        \draw[-][black] (2,1) -- (3,1);
        \draw[-][black] (3,1) -- (4,3);
        \draw[-][black] (4,3) -- (5,3);
        \draw[-][black] (5,3) -- (6,3);
        \draw[-][black] (6,3) -- (7,3);
        \draw[-][black] (7,3) -- (8,3);

        \draw (8.5, 6) node{$L_1$};
        \draw (8.5, 4) node{$L_2$};
        \draw (8.5, 3) node{$L_3$};

		\end{scope}

		\end{tikzpicture}
	\end{center}
	\caption{The figure depicts the top three curves in $\mathfrak{L}$, satisfying (\ref{S1E1}) and (\ref{S1E2}).}
	\label{S1_1}
\end{figure}

The main result of the paper, Theorem \ref{S1T1} (which is a special case of the more general Theorem \ref{S5T} in the main text), gives general conditions under which a sequence of line ensembles that satisfy the interlacing Gibbs property is tight (under an appropriate diffuse scaling). We furthermore show that any subsequential limit satisfies the Brownian Gibbs property. I.e. in a diffuse scaling the local geometric random walk structure of the paths becomes that of Brownian motions (or rather bridges), and the local interlacing condition becomes a locally avoiding one.  

Part of the interest in ensembles that satisfy the interlacing Gibbs property comes from the fact that they arise in the Schur processes, introduced by Okounkov and Reshetikhin in \cite{OR03}. In particular, in Section \ref{Section6} we apply Theorem \ref{S1T1} to a sequence of spiked Schur processes, that were recently considered in \cite{ED24a}, and show that the latter converge uniformly over compact sets to the {\em Airy wanderer line ensembles} -- see Proposition \ref{S62P}. We mention that the Airy wanderer line ensembles were constructed for general parameters in \cite{ED24a} as weak limits of such ensembles for special parameters from \cite{AFM10, CorHamA}. By combining \cite[Theorem 1.8]{ED24a} and Proposition \ref{S62P} one obtains an alternative construction of these ensembles, directly as weak limits of Schur processes, which is independent of the results in \cite{AFM10, CorHamA}. In addition, the more general Theorem \ref{S5T} in the main text allows one to deal with line ensembles defined on a half-infinite or even a finite interval, which makes it applicable to a large class of models including the half-space and free boundary Schur processes from \cite{BBCS18, BBNV18}, and the periodic Schur processes from \cite{BB19, Bor07}. For example, the results of the present paper have already found applications to the analysis of half-space Schur processes away from the boundary in \cite{Z25} and in constructing half-space analogues of the Airy line ensemble in \cite{DY25}.

%
%
\subsection{Main result}\label{Section1.2} As mentioned in Section \ref{Section1.1} the main technical result of the paper is Theorem \ref{S5T}, which we forego stating until the main body of text after we have introduced some relevant notation in Section \ref{Section2}. In this section we state a special case of that result as Theorem \ref{S1T1}, which provides a good idea of the type of assumptions we make and consequences we can deduce, while being easier to formulate.

A {\em partition} is a sequence $\lambda = (\lambda_1, \lambda_2 ,\dots)$ of non-negative integers such that $\lambda_1 \geq \lambda_2 \geq \cdots$ and all but finitely many terms are zero. We denote the set of all partitions by $\mathbb{Y}$. We say that $\lambda, \mu \in \mathbb{Y}$ {\em interlace}, denoted by $\lambda \succeq \mu$ or $\mu \preceq \lambda$, if $\lambda_1 \geq \mu_1 \geq \lambda_2 \geq \mu_2 \geq \cdots$.  

We assume that $\mathfrak{L}^N$ is a sequence of random elements in $\mathbb{Y}^{\mathbb{Z}}$, where we write $\mathfrak{L}^N = \{\mathfrak{L}^N(s): s \in \mathbb{Z}\}$ and $\mathfrak{L}^N(s) = (L^N_1(s), L^N_2(s), \dots) \in \mathbb{Y}$. We make the following assumptions about this sequence.\\

{\bf \raggedleft Assumption 1. (One-point tightness)} There exists $p \in (0,\infty)$, $\sigma = \sqrt{p (1+p)}$ and sequences $d_N \in (0, \infty), C_N \in \mathbb{R}$ with $\lim_N d_N = \infty$ such that for each $t \in \mathbb{R}$ and $i \in \mathbb{N}$ the sequence of random variables $\sigma^{-1} d_N^{-1/2} (L_i(\lfloor t d_N \rfloor) - p td_N - C_N)$ is tight.\\

{\bf \raggedleft Assumption 2. (Interlacing Gibbs property)} There are sequences of integers $\hat{A}_N, \hat{B}_N \in \mathbb{Z}$ with $\hat{A}_N \leq \hat{B}_N$, such that $\lim_N d_N^{-1} \hat{A}_N = -\infty$, $\lim_N d_N^{-1} \hat{B}_N = \infty$, and for all $\lambda^{\hat{A}_N}, \lambda^{\hat{A}_N+1}, \dots, \lambda^{\hat{B}_N} \in \mathbb{Y}$ 
\begin{equation*}
\begin{split}
&\mathbb{P}\left(\mathfrak{L}^N(\hat{A}_N)= \lambda^{\hat{A}_N}, \mathfrak{L}^N(\hat{A}_N + 1) = \lambda^{\hat{A}_N+1}, \dots,  \mathfrak{L}^N(\hat{B}_N) = \lambda^{\hat{B}_N} \right)\\
& = \mathbb{P} \left( \mathfrak{L}^N(\hat{A}_N) = \lambda^{\hat{A}_N}, \mathfrak{L}^N(\hat{B}_N) = \lambda^{\hat{B}_N} \right) \cdot {\bf 1} \{ \lambda^{\hat{A}_N} \preceq \lambda^{\hat{A}_N + 1} \preceq \cdots \preceq \lambda^{\hat{B}_N} \}.
\end{split}
\end{equation*}
In plain words, the last equation states that the conditional law of $\{ \mathfrak{L}^N(s): s = \hat{A}_N+1, \dots, \hat{B}_N-1\}$, given that $\mathfrak{L}^N(\hat{A}_N) = \lambda^{\hat{A}_N}, \mathfrak{L}^N(\hat{B}_N) = \lambda^{\hat{B}_N}$, is uniform on $\mathbb{Y}^{\hat{B}_N - \hat{A}_N - 1}$ subject to interlacing.\\

We can now formulate our main result about $\mathfrak{L}^N$. 
\begin{theorem}\label{S1T1} Suppose that $\mathfrak{L}^N$ is a sequence of random elements in $\mathbb{Y}^{\mathbb{Z}}$ that satisfies Assumptions 1 and 2 above. Define the sequence $\mathcal{L}^N = \{\mathcal{L}_i^N\}_{i = 1}^{\infty} \in C(\mathbb{N} \times \mathbb{R})$ by
$$\mathcal{L}^N_i(t) = \sigma^{-1} d_N^{-1/2} \cdot \left( L_i^N(t d_N) - p td_N - C_N\right), $$
where $L_i^N(s)$ is defined for non-integer $s$ by linear interpolation. Then, $\mathcal{L}^N$ is a tight sequence of random elements in $C(\mathbb{N} \times \mathbb{R})$. Moreover, any subsequential limit satisfies the Brownian Gibbs property from \cite{CorHamA}, see also \cite[Definition 2.5]{DM21}.
\end{theorem}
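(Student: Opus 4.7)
The theorem asserts two things --- tightness of $\mathcal{L}^N$ in $C(\mathbb{N}\times\mathbb{R})$ and the Brownian Gibbs property for every subsequential limit. By Prokhorov together with a diagonal argument over the exhausting family of compacts $\{1,\ldots,k\}\times[-T,T]$, tightness in $C(\mathbb{N}\times\mathbb{R})$ reduces, for each $k\in\mathbb{N}$ and $T>0$, to (i) one-point tightness of $\mathcal{L}^N_i(0)$ for $i\le k$, which is given by Assumption 1, and (ii) a uniform-in-$N$ modulus of continuity estimate for each $\mathcal{L}^N_i$ on $[-T,T]$ with $i\le k$. All the work lies in (ii) and in the identification of the Gibbs property of the limit.

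To obtain (ii), I would fix $T'>T$, set $A_N=\lfloor -T'd_N\rfloor$ and $B_N=\lceil T'd_N\rceil$ (which lie inside $[\hat{A}_N,\hat{B}_N]$ once $N$ is large), and condition on $\mathfrak{L}^N(A_N)$ and $\mathfrak{L}^N(B_N)$. By Assumption 2 the conditional law of $\{\mathfrak{L}^N(s)\}_{s\in[A_N,B_N]}$ is uniform over sequences of interlacing partitions with the prescribed endpoints, and this coincides with the product law of independent geometric-increment bridges between the given endpoints reweighted by the interlacing indicator, because a geometric random walk bridge has uniform conditional distribution over non-negative increment sequences with the correct total displacement. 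A standard monotone-coupling / Glauber-dynamics argument in the spirit of \cite{CorHamA, DEA21} then stochastically compares each $L^N_i$ with $i\le k$ to free (unconstrained) geometric random walk bridges with the same endpoints, and a soft acceptance-rejection argument shows that any low-probability event under the free product measure remains low-probability under the interlacing measure, provided the probability that free bridges interlace with the prescribed endpoints is bounded uniformly below. This uniform lower bound, in turn, follows from Assumption 1 applied at $t=\pm T'$ (which controls the rescaled endpoint gaps) together with the convergence of rescaled free geometric bridges to Brownian bridges. The modulus of continuity of $\mathcal{L}^N_i$ then inherits from the Brownian modulus of continuity of the free rescaled geometric bridges, which is a standard consequence of a KMT strong embedding.

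For the Brownian Gibbs property of a subsequential limit $\mathcal{L}^\infty$, I would fix $[a,b]\subset\mathbb{R}$ and $k\in\mathbb{N}$ and pass to the limit in the discrete Gibbs statement encoded by Assumption 2: conditionally on the rescaled endpoint data of the top $k$ curves at $a,b$ and on the $(k+1)$-st rescaled curve on $[a,b]$, the top $k$ rescaled curves on $[a,b]$ are distributed as the uniform interlacing bridge measure with those boundary data. The aim is to show that, uniformly over the tight family of admissible boundary data, this rescaled uniform interlacing law converges to the law of $k$ independent Brownian bridges with the prescribed endpoints, conditioned on strict mutual avoidance and on avoidance of $\mathcal{L}^\infty_{k+1}$. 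Granted this invariance principle for the conditioned interlacing bridge ensemble, the Brownian Gibbs property for $\mathcal{L}^\infty$ follows by standard Skorokhod / weak-convergence bookkeeping.

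The main obstacle is precisely this last invariance principle for the conditioned uniform interlacing bridge ensemble, uniformly over boundary data in a compact set. The monotone coupling with free bridges yields tightness and one-sided stochastic comparison, but identifying the limit as non-intersecting Brownian bridges requires ruling out boundary-layer concentration at $s=a,b$ induced by the hard interlacing constraint and constructing a quantitative KMT-style coupling adapted to the interlacing law. This refined coupling --- which exploits the memorylessness of the geometric distribution and the fact that rescaled gaps between consecutive curves are of order one --- constitutes the main technical content of the proof, and it is the place where the specific form of the local law (uniform on interlacing geometric bridges) enters the argument in an essential way.
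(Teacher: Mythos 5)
Your overall architecture matches the paper's: condition on the boundary data at $\pm T'$ via the interlacing Gibbs property, compare the conditioned interlacing bridges to free geometric bridges by showing the free-bridge interlacing probability is bounded below (so that unlikely free events remain unlikely for the interlacing law), transfer the Brownian modulus of continuity through a strong coupling, and then prove an invariance principle for the conditioned interlacing ensemble to identify the limiting Gibbs property. (The paper's stated proof of Theorem \ref{S1T1} is itself just a two-line reduction to Theorem \ref{S5T}; what you are sketching is the content of that theorem, which is the right thing to do.)

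However, there is a genuine gap at the pivotal step. You assert that the uniform lower bound on the probability that free bridges interlace ``follows from Assumption 1 applied at $t=\pm T'$ (which controls the rescaled endpoint gaps).'' Assumption 1 only gives one-point tightness of each curve separately, i.e.\ upper and lower bounds on each rescaled value $\mathcal{L}^N_i(\pm T')$; it gives no lower bound whatsoever on the gaps $\mathcal{L}^N_i(\pm T')-\mathcal{L}^N_{i+1}(\pm T')$, which a priori could be $o(1)$ (the partitions may have equal parts). If the endpoint gaps degenerate, the probability that independent bridges interlace tends to zero and the acceptance--rejection comparison collapses. Establishing that the curves are in fact separated by $\delta^{\mathsf{sep}}d_N^{1/2}$ at the conditioning times with high probability is precisely the main technical contribution of the paper (Lemmas \ref{S41L} and \ref{S52L}): one lowers the boundary data to artificially well-separated values using the monotone coupling of Lemma \ref{MCL}, builds disjoint corridor events of uniformly positive probability (Lemma \ref{S34L}) to bound the interlacing probability below for \emph{those} data, and then uses the no-bulk-concentration estimate (Lemma \ref{S31L}) together with modulus-of-continuity control to push the $k$-th curve strictly above the $(k+1)$-st over a macroscopic window. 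A related omission: your comparison also needs the lower boundary curve $L^N_{k+1}$ to not be too high anywhere on $[A_N,B_N]$, which does not follow from one-point tightness at finitely many times and requires the separate ``no big max'' argument of Lemma \ref{S51L}. Finally, you locate the main difficulty in a ``KMT-style coupling adapted to the interlacing law''; the paper needs no such object --- the invariance principle (Lemma \ref{lem:RW}) is proved by coupling only the \emph{free} bridges via KMT and handling the interlacing indicator through the crossing/touching dichotomy for Brownian bridges, so the conditioning passes to the limit as a ratio of converging expectations.
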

\begin{remark} We mention that $C(\mathbb{N} \times \mathbb{R})$ is the space of continuous functions $f: \mathbb{N} \times \mathbb{R} \rightarrow \mathbb{R}$ with the topology of uniform convergence over compact sets, see Section \ref{Section2.1}.
\end{remark}
\begin{remark} In plain words Theorem \ref{S1T1} says that in the presence of the interlacing Gibbs property (Assumption 2), one point tightness for a sequence of line ensembles (Assumption 1) implies tightness of the sequence as random elements in $C(\mathbb{N} \times \mathbb{R})$. Moreover, the interlacing Gibbs property in the limit becomes the Brownian Gibbs property.
\end{remark}

The proof of Theorem \ref{S1T1} is given right after Theorem \ref{S5T} in Section \ref{Section5}, and is an easy corollary of the latter statement. Specifically, Theorem \ref{S5T} generalizes Theorem \ref{S1T1} by allowing $\mathfrak{L}^N$ to possibly have finitely many curves, and by allowing $d_N^{-1}[ \hat{A}_N, \hat{B}_N]$ to approximate an arbitrary interval $\Lambda \subseteq \mathbb{R}$. The ability to deal with general $\Lambda$ (and not just $\Lambda = \mathbb{R}$) is important for applications to the half-space Schur processes in \cite{BBCS18, BBNV18}, where $\Lambda = (0, \infty)$, and the periodic Schur processes from \cite{BB19, Bor07}, where $\Lambda = (0,L)$ for some $L > 0$.\\

Theorem \ref{S1T1}, and more generally Theorem \ref{S5T}, is similar to \cite[Theorem 3.8]{CorHamA}, \cite[Theorem 1.1]{DEA21}, and \cite[Theorem 1.1]{S23}. In \cite{DEA21} the authors dealt with avoiding Bernoulli random walk bridges, and \cite{S23} extended the argument to a class of avoiding random walkers with general jump distributions, while \cite{CorHamA} worked directly with avoiding Brownian line ensembles. One of the differences between our setup and \cite{DEA21, S23} is that our walkers are interlacing rather than avoiding -- this is quite mild and does not affect the problem substantially. A more serious difference is that in \cite{DEA21, S23} the authors only worked on $\mathbb{R}$, assumed that the top curves $\mathcal{L}_1^N$ have one-point tight marginals, which are globally parabolic. This global parabolicity is what allowed them to conclude that all the other curves also need to have tight one-point marginals. In Theorem \ref{S5T} we want to deal with possibly finite intervals $\Lambda$, for which one needs more than just information about the top curve. Specifically, it is possible for all curves except the first ones in our sequence of ensembles to dip to $-\infty$, and the top curves to converge to a Brownian bridge on $\Lambda$. Consequently, one-point tightness of the top curves does not prevent other curves from escaping to $-\infty$. This is why we require one-point tightness of {\em all} the curves and not just the top one. In this regard our assumptions are closer to those in \cite{CorHamA}.

One thing that we do differently from \cite{CorHamA} is that we do not assume that the curves $\mathcal{L}^N_i$ separate from each other at a fixed time $t$ with high probability, see Hypothesis H3 in \cite[Definition 3.3]{CorHamA}. Instead, we show that this likely separation of the curves occurs by itself under only the one-point tightness assumption. The separation of the curves is an important ingredient in establishing tightness for $\mathcal{L}^N$, since it allows one to make a good comparison between interlacing random walk bridges and free (i.e. not conditioned on interlacing) random walk bridges. In particular, one can show that any event that is unlikely for the free bridges is also unlikely for the interlacing ones. The way this is used in our proofs is as follows. To show tightness, we need to show that for each $i$, the random curves $\mathcal{L}^N_i$ likely have well-behaved moduli of continuity. Being able to compare interlacing with free random walk bridges, allows us to reduce this statement to showing that a sequence of free geometric random walk bridges have well-behaved moduli of continuity. The latter statement can then be established by comparing a geometric random walk bridge with a Brownian bridge using the strong coupling from \cite{DW19}, which we use in establishing Proposition \ref{KMT}. We mention that our arguments do not rely on the full strength of the results in \cite{DW19}, and even a mild comparison between our random walk bridges and Brownian bridges suffices for us.

The likely curve separation was established in \cite{DEA21} through detailed (and rather involved) exact computations for avoiding Bernoulli random walkers, coming from Schur symmetric functions. In \cite{S23} this separation was established by a careful comparison with the Bernoulli case, see \cite[Lemma 3.12]{S23}. Our approach is considerably simpler and free of exact computations. Specifically, it only relies on a mild comparison of our random walk bridges with Brownian bridges, and the fact that we can monotonically couple interlacing random walk bridges in their boundary. Between the two, the monotone coupling, see Lemma \ref{MCL} for a precise statement, is the more restrictive but it is known to occur in many contexts including the aforementioned works \cite{CorHamA, DEA21, S23}. The crux of the separation argument is in the proof of Lemma \ref{S41L}, and the well-behaved nature of the modulus of continuity under separation is Lemma \ref{S42L}. We believe that the latter two statements and their proofs can be readily generalized to a large class of line ensembles, which satisfy the monotone coupling. This would remove the hypothesis of likely curve separation (which can be quite challenging to verify for many models), and also eliminate the need for exact computations.

%
%
\subsection{Outline}\label{Section1.3} In Section \ref{Section2} we introduce the general class of models we study, called {\em geometric line ensembles}, and their local interlacing structure, called the {\em interlacing Gibbs property}. The section contains most of the core notation for the rest of the paper, as well as the monotone coupling, see Lemma \ref{MCL}; the comparison to Brownian bridges, see Proposition \ref{KMT}; and the statement that under diffuse scaling interlacing geometric random walk bridges converge to avoiding Brownian bridges, see Lemma \ref{lem:RW}. The proofs of Lemmas \ref{MCL} and \ref{lem:RW} are given in Appendix \ref{AppendixA}. Section \ref{Section3} provides several technical estimates for a single geometric random walk bridge, which is either free, or conditioned to stay above a well-behaved lower boundary. The proofs in Section \ref{Section3} rely mostly on the monotone coupling in Lemma \ref{MCL} and the bridge comparison in Proposition \ref{KMT}. Section \ref{Section4} contains the critical Lemma \ref{S41L}, which ensures curve separation, and Lemma \ref{S42L}, which provides modulus of continuity estimates under the assumption of curve separation. Section \ref{Section5} contains the most general version of our results, see Theorem \ref{S5T}. In the same section we deduce Theorem \ref{S1T1} from Theorem \ref{S5T} and provide a proof of the latter. In Section \ref{Section6} we apply Theorem \ref{S1T1} to sequences of line ensembles that arises in spiked Schur processes, and show that the latter converge to the Airy wanderer line ensembles, see Proposition \ref{S62P}.

%
%
\subsection*{Acknowledgments}
The author would like to thank Alexei Borodin and Ivan Corwin for many inspiring conversations. This work has been partially supported by the NSF grant DMS:2230262.

%
%
\section{Basic definitions and results}\label{Section2} In this section we introduce the notions of a {\em geometric line ensemble} and the {\em interlacing Gibbs property}. Some of our notation is based on \cite[Section 2]{DEA21}, which in turn goes back to \cite{CorHamA}. We also state several basic properties for these ensembles, including a monotone coupling lemma and a strong coupling of such ensembles with a single curve to Brownian bridges of appropriate variance.

%
%
\subsection{Geometric line ensembles}\label{Section2.1} Given two integers $a \leq b$, we let $\llbracket a, b \rrbracket$ denote the set $\{a, a+1, \dots, b\}$. We also set $\llbracket a,b \rrbracket = \emptyset$ when $a > b$, $\llbracket a, \infty \rrbracket = \{a, a+1, a+2 , \dots \}$, $\llbracket - \infty, b\rrbracket = \{b, b-1, b-2, \dots\}$ and $\llbracket - \infty, \infty \rrbracket = \mathbb{Z}$. Given an interval $\Lambda \subset \mathbb{R}$, we endow it with the subspace topology of the usual topology on $\mathbb{R}$. We let $(C(\Lambda), \mathcal{C})$ denote the space of continuous functions $f: \Lambda \rightarrow \mathbb{R}$ with the topology of uniform convergence over compacts, see \cite[Chapter 7, Section 46]{Munkres}, and Borel $\sigma$-algebra $\mathcal{C}$. Given a set $\Sigma \subset \mathbb{Z}$, we endow it with the discrete topology and denote by $\Sigma \times \Lambda$ the set of all pairs $(i,x)$ with $i \in \Sigma$ and $x \in \Lambda$ with the product topology. We also denote by $\left(C (\Sigma \times \Lambda), \mathcal{C}_{\Sigma}\right)$ the space of real-valued continuous functions on $\Sigma \times \Lambda$ with the topology of uniform convergence over compact sets and Borel $\sigma$-algebra $\mathcal{C}_{\Sigma}$. We typically take $\Sigma = \llbracket 1, N \rrbracket$ with $N \in \mathbb{N} \cup \{\infty\}$.

The following defines the notion of a line ensemble.
\begin{definition}\label{CLEDef}
Let $\Sigma \subseteq \mathbb{Z}$ and $\Lambda \subseteq \mathbb{R}$ be an interval. A {\em $\Sigma$-indexed line ensemble $\mathcal{L}$} is a random variable defined on a probability space $(\Omega, \mathcal{F}, \mathbb{P})$ that takes values in $\left(C (\Sigma \times \Lambda), \mathcal{C}_{\Sigma}\right)$. Intuitively, $\mathcal{L}$ is a collection of random continuous curves (sometimes referred to as {\em lines}), indexed by $\Sigma$,  each of which maps $\Lambda$ in $\mathbb{R}$. We will often slightly abuse notation and write $\mathcal{L}: \Sigma \times \Lambda \rightarrow \mathbb{R}$, even though it is not $\mathcal{L}$ which is such a function, but $\mathcal{L}(\omega)$ for every $\omega \in \Omega$. For $i \in \Sigma$ we write $\mathcal{L}_i(\omega) = (\mathcal{L}(\omega))(i, \cdot)$ for the curve of index $i$ and note that the latter is a map $\mathcal{L}_i: \Omega \rightarrow C(\Lambda)$, which is $(\mathcal{C}, \mathcal{F})-$measurable. If $a,b \in \Lambda$ satisfy $a \leq b$ we let $\mathcal{L}_i[a,b]$ denote the restriction of $\mathcal{L}_i$ to $[a,b]$.
\end{definition}
\begin{remark}\label{RemPolish} As shown in \cite[Lemma 2.2]{DEA21}, we have that $C(\Sigma \times \Lambda)$ is a Polish space, and so a line ensemble $\mathcal{L}$ is just a random element in $C(\Sigma \times \Lambda)$ in the sense of \cite[Section 3]{Bill}.
\end{remark}

\begin{definition}\label{DefDLE}
Let $\Sigma \subseteq \mathbb{Z}$ and $\llbracket T_0, T_1 \rrbracket$ be a non-empty integer interval in $\mathbb{Z}$. Consider the set $Y$ of functions $f: \Sigma \times \llbracket T_0, T_1 \rrbracket \rightarrow \mathbb{Z}$ such that $f(i, j+1) - f(i,j) \in \mathbb{Z}_{\geq 0}$ when $i \in \Sigma$ and $j \in\llbracket T_0, T_1 -1 \rrbracket$ and let $\mathcal{D}$ denote the discrete $\sigma$-algebra on $Y$. We call a function $g: \llbracket T_0, T_1 \rrbracket \rightarrow \mathbb{Z}$ such that $f( j+1) - f(j) \in \mathbb{Z}_{\geq 0}$ when $j \in\llbracket T_0, T_1 -1 \rrbracket$  an {\em increasing path} and elements in $Y$ {\em collections of increasing paths}. A $\Sigma$-{\em indexed geometric line ensemble $\mathfrak{L}$ on $\llbracket T_0, T_1 \rrbracket$}  is a random variable defined on a probability space $(\Omega, \mathcal{B}, \mathbb{P})$, taking values in $Y$ such that $\mathfrak{L}$ is a $(\mathcal{B}, \mathcal{D})$-measurable function. Unless otherwise specified, we will assume that $T_0 \leq T_1$ are both integers, although the above definition makes sense if $T_0 = -\infty$, or $T_1 = \infty$, or both.
\end{definition}

\begin{remark} The condition $f(i, j+1) - f(i,j) \in \mathbb{Z}_{\geq 0}$ when $i \in \Sigma$ and $j \in\llbracket T_0, T_1 -1 \rrbracket$ essentially means that for each $i \in \Sigma$ the function $f(i, \cdot)$ can be thought of as the trajectory of a geometric random walk from time $T_0$ to time $T_1$. Here, and throughout the paper a geometric random variable $X$ with parameter $q \in (0,1)$ has probability mass function $\mathbb{P}(X = k) = (1-q)q^k$ for $k \in \mathbb{Z}_{\geq 0}$.
\end{remark}

We think of geometric line ensembles as collections of random increasing paths on the integer lattice, indexed by $\Sigma$. Observe that one can view an increasing path $L$ on $\llbracket T_0, T_1 \rrbracket$ as a continuous curve by linearly interpolating the points $(i, L(i))$, see Figure \ref{S1_1}. This allows us to define $ (\mathfrak{L}(\omega)) (i, s)$ for non-integer $s \in [T_0,T_1]$ and to view geometric line ensembles as line ensembles in the sense of Definition \ref{CLEDef}. In particular, we can think of $\mathfrak{L}$ as a random element in $C (\Sigma \times \Lambda)$ with $\Lambda = [T_0, T_1]$.

We will often slightly abuse notation and write $\mathfrak{L}: \Sigma \times \llbracket T_0, T_1 \rrbracket \rightarrow \mathbb{Z}$, even though it is not $\mathfrak{L}$ which is such a function, but rather $\mathfrak{L}(\omega)$ for each $\omega \in \Omega$. Furthermore we write $L_i = (\mathfrak{L}(\omega)) (i, \cdot)$ for the index $i \in \Sigma$ path. If $L$ is an increasing path on $\llbracket T_0, T_1 \rrbracket$ and $a, b \in \llbracket T_0, T_1 \rrbracket$ satisfy $a \leq b$ we let $L\llbracket a, b \rrbracket$ denote the restriction of $L$ to $\llbracket a,b\rrbracket$. \\

Let $t_i, z_i \in \mathbb{Z}$ for $i = 1,2$ be given such that $t_1 \leq t_2$ and $z_1 \leq z_2$. We denote by $\Omega(t_1,t_2,z_1,z_2)$ the collection of increasing paths that start from $(t_1,z_1)$ and end at $(t_2,z_2)$, by $\mathbb{P}_{\operatorname{Geom}}^{t_1,t_2, z_1, z_2}$ the uniform distribution on $\Omega(t_1,t_2,z_1,z_2)$ and write $\mathbb{E}^{t_1,t_2,z_1,z_2}_{\operatorname{Geom}}$ for the expectation with respect to this measure. One thinks of the distribution $\mathbb{P}_{\operatorname{Geom}}^{t_1,t_2, z_1, z_2}$ as the law of a simple random walk with i.i.d. geometric increments with parameter $q \in (0,1)$ that starts from $z_1$ at time $t_1$ and is conditioned to end in $z_2$ at time $t_2$ -- this interpretation does not depend on the choice of $q \in (0,1)$. Notice that by our assumptions on the parameters the state space $\Omega(t_1,t_2,z_1,z_2)$ is non-empty.  

Given $k \in \mathbb{N}$, $T_0, T_1 \in \mathbb{Z}$ with $T_0 < T_1$ and $\vec{x}, \vec{y} \in \mathbb{Z}^k$ we let $\mathbb{P}^{T_0,T_1, \vec{x},\vec{y}}_{\operatorname{Geom}}$ denote the law of $k$ independent geometric bridges $\{B_i: \llbracket T_0, T_1 \rrbracket  \rightarrow \mathbb{Z} \}_{i = 1}^k$ from $B_i(T_0) = x_i$ to $B_i(T_1) = y_i$. Equivalently, this is the law of $k$ independent random increasing paths $B_i \in \Omega(T_0,T_1,x_i,y_i)$ for $i \in \llbracket 1, k \rrbracket$ that are uniformly distributed or just the uniform measure on 
$$\Omega_{\operatorname{Geom}}(T_0, T_1, \vec{x}, \vec{y}) = \Omega(T_0,T_1,x_1,y_1) \times \cdots \times \Omega(T_0,T_1,x_k,y_k).$$
This measure is well-defined provided that $\Omega(T_0,T_1,x_i,y_i)$ are non-empty for $i \in \llbracket 1, k \rrbracket$, which holds if $y_i \geq x_i$ for all $i \in \llbracket 1, k \rrbracket$.

The following definition introduces the notion of an $(f,g)$-interlacing geometric line ensemble, which in simple terms is a collection of $k$ independent geometric bridges, conditioned on interlacing with each other and the graphs of two functions $f$ and $g$.
\begin{definition}\label{DefAvoidingLawBer}
Let $k \in \mathbb{N}$ and $\mathfrak{W}_k$ denote the set of signatures of length $k$, i.e.
\begin{equation}\label{DefSig}
\mathfrak{W}_k = \{ \vec{x} = (x_1, \dots, x_k) \in \mathbb{Z}^k: x_1 \geq  x_2 \geq  \cdots \geq  x_k \}.
\end{equation}
Let $\vec{x}, \vec{y} \in \mathfrak{W}_k$, $T_0, T_1 \in \mathbb{Z}$ with $T_0 < T_1$, and $f: \llbracket T_0, T_1 \rrbracket \rightarrow (-\infty, \infty]$ and $g: \llbracket T_0, T_1 \rrbracket \rightarrow [-\infty, \infty)$ be two functions. With the above data we define the {\em $(f,g)$-interlacing geometric line ensemble on the interval $\llbracket T_0, T_1 \rrbracket$ with entrance data $\vec{x}$ and exit data $\vec{y}$} to be the $\Sigma$-indexed geometric line ensemble $\mathfrak{Q}$ with $\Sigma = \llbracket 1, k\rrbracket$ on $\llbracket T_0, T_1 \rrbracket$ and with the law of $\mathfrak{Q}$ equal to $\mathbb{P}^{T_0,T_1, \vec{x},\vec{y}}_{\operatorname{Geom}}$ (the law of $k$ independent uniform increasing paths $\{B_i: \llbracket T_0, T_1 \rrbracket \rightarrow \mathbb{Z} \}_{i = 1}^k$ from $B_i(T_0) = x_i$ to $B_i(T_1) = y_i$), conditioned on 
\begin{equation}\label{EventInter}
\begin{split}
\ice  = &\left\{ B_i(r-1) \geq B_{i+1}(r)  \mbox{ for all $r \in \llbracket T_0 + 1, T_1 \rrbracket$ and $i \in \llbracket 0 , k \rrbracket$} \right\},
\end{split}
\end{equation}
with the convention that $B_0(x) = f(x)$ and $B_{k+1}(x) = g(x)$.

The above definition is well-posed if there exist $B_i \in \Omega(T_0,T_1,x_i,y_i)$ for $i \in \llbracket 1, k \rrbracket$ that satisfy the conditions in $\ice$. We denote by $\Omega_{\ice}(T_0, T_1, \vec{x}, \vec{y}, f,g)$ the set of collections of $k$ increasing paths that satisfy the conditions in $\ice$ and then the distribution of $\mathfrak{Q}$ is simply the uniform measure on $\Omega_{\ice}(T_0, T_1, \vec{x}, \vec{y}, f,g)$. We denote the probability distribution of $\mathfrak{Q}$ as $\mathbb{P}_{\ice, \operatorname{Geom}}^{T_0,T_1, \vec{x}, \vec{y}, f, g}$ and write $\mathbb{E}_{\ice, \operatorname{Geom}}^{T_0, T_1, \vec{x}, \vec{y}, f, g}$ for the expectation with respect to this measure. If $f=+\infty$ and $g=-\infty$, we drop them from the notation and simply write $\Omega_{\ice}(T_0,T_1,\vec{x},\vec{y})$, $\mathbb{P}^{T_0, T_1, \vec{x},\vec{y}}_{\ice, \operatorname{Geom}}$, and $\mathbb{E}^{T_0, T_1, \vec{x},\vec{y}}_{\ice, \operatorname{Geom}}$.
\end{definition}

The following definition introduces the notion of the interlacing Gibbs property.
\begin{definition}\label{DefSGP}
Fix a set $\Sigma = \llbracket 1, N \rrbracket$ with $N \in \mathbb{N}$ or $N = \infty$ and $T_0, T_1\in \mathbb{Z}$ with $T_0 \leq T_1$. A $\Sigma$-indexed geometric line ensemble $\mathfrak{L} : \Sigma \times \llbracket T_0, T_1 \rrbracket \rightarrow \mathbb{Z}$ is said to satisfy the {\em interlacing Gibbs property} if it is interlacing, meaning that 
$$ L_i(j-1) \geq L_{i+1}(j) \mbox{ for all $i \in \llbracket 1, N - 1 \rrbracket$ and $j \in \llbracket T_0 + 1, T_1 \rrbracket$},$$
and for any finite $K = \llbracket k_1, k_2 \rrbracket \subseteq \llbracket 1, N - 1 \rrbracket$ and $a,b \in \llbracket T_0, T_1 \rrbracket$ with $a < b$ the following holds.  Suppose that $f, g$ are two increasing paths drawn in $\{ (r,z) \in \mathbb{Z}^2 : a \leq r \leq b\}$ and $\vec{x}, \vec{y} \in \mathfrak{W}_k$ with $k=k_2-k_1+1$ altogether satisfy that $\mathbb{P}(A) > 0$ where $A$ denotes the event $$A =\{ \vec{x} = ({L}_{k_1}(a), \dots, {L}_{k_2}(a)), \vec{y} = ({L}_{k_1}(b), \dots, {L}_{k_2}(b)), L_{k_1-1} \llbracket a,b \rrbracket = f, L_{k_2+1} \llbracket a,b \rrbracket = g \},$$
where if $k_1 = 1$ we adopt the convention $f = \infty = L_0$. Then, we have for any $\{ B_i \in \Omega(a, b, x_i , y_i) \}_{i = 1}^k$ that
\begin{equation}\label{SchurEq}
\mathbb{P}\left( L_{i + k_1-1}\llbracket a,b \rrbracket = B_{i} \mbox{ for $i \in \llbracket 1, k \rrbracket$} \, \vert  A \, \right) = \mathbb{P}_{\ice, \operatorname{Geom}}^{a,b, \vec{x}, \vec{y}, f, g} \left( \cap_{i = 1}^k\{ Q_i = B_i \} \right).
\end{equation}
\end{definition}
\begin{remark}\label{RemSGB} In simple words, a geometric line ensemble is said to satisfy the interlacing Gibbs property if the distribution of any finite number of consecutive paths, conditioned on their end-points and the paths above and below them is simply the uniform measure on all collections of increasing paths that have the same end-points and interlace with each other and the two paths above and below. 
\end{remark}

\begin{remark}\label{RemSGB2} Observe that in Definition \ref{DefSGP} the index $k_2$ is assumed to be less than or equal to $N-1$, so that if $N < \infty$ the $N$-th path is special and is not conditionally uniform. If $N = 1$, then the conditions in Definition \ref{DefSGP} become void, i.e., any geometric line ensemble with one line satisfies the geometric Gibbs property. We mention that since $\mathfrak{L}$ is interlacing we automatically have for each $j \in \llbracket T_0, T_1\rrbracket$ that $L_1(j) \geq L_2(j) \geq  \cdots$. In addition, the well-posedness of $\mathbb{P}_{\ice, \operatorname{Geom}}^{T_0,T_1, \vec{x}, \vec{y}, f, g}$ in (\ref{SchurEq}) is a consequence of our assumption that $\mathbb{P}(A) > 0$.
\end{remark}

\begin{remark} \label{restrict}  An immediate consequence of Definition \ref{DefSGP} is that if $M \leq N$, we have that the induced law on $\{L_i\}_{i = 1}^M$ also satisfies the interlacing Gibbs property as an $\llbracket 1, M \rrbracket$-indexed geometric line ensemble on $\llbracket T_0, T_1 \rrbracket$.
\end{remark}

We end this section with the following two results. The first, Lemma \ref{Lem.FinEnsSatGB}, shows that line ensembles with law $\mathbb{P}_{\ice, \operatorname{Geom}}^{T_0,T_1, \vec{x}, \vec{y}, \infty, g}$ as in Definition \ref{DefAvoidingLawBer} satisfy the interlacing Gibbs property. The second, Lemma \ref{Lem.StrongGP}, provides a stronger property than (\ref{SchurEq}) enjoyed by line ensembles that satisfy the interlacing Gibbs property. This condition is automatically satisfied; however, it is a bit harder to check in practice, which is why we formulated the interlacing Gibbs property using (\ref{SchurEq}). Both lemmas are proved in Appendix \ref{AppendixA3}.
\begin{lemma}\label{Lem.FinEnsSatGB} Let $\vec{x}, \vec{y} \in \mathfrak{W}_k$, $T_0, T_1 \in \mathbb{Z}$ with $T_0 < T_1$, and $g: \llbracket T_0, T_1 \rrbracket \rightarrow \mathbb{Z}$ be an increasing path. Suppose that $\Omega_{\ice}(T_0, T_1, \vec{x}, \vec{y}, \infty,g) \neq \emptyset$ and $\mathfrak{Q}$ has law $\mathbb{P}_{\ice, \operatorname{Geom}}^{T_0,T_1, \vec{x}, \vec{y}, \infty, g}$ as in Definition \ref{DefSGP}. Define the $\llbracket 1, k+1 \rrbracket$-indexed geometric line ensemble $\mathfrak{L}: \llbracket 1, k+1 \rrbracket \times \llbracket T_0, T_1 \rrbracket \rightarrow \mathbb{Z}$ through
$$\mathfrak{L}(i,j) = \mathfrak{Q}(i,j) \mbox{ for } i \in \llbracket 1, k \rrbracket, j \in \llbracket T_0, T_1 \rrbracket, \mbox{ and } \mathfrak{L}(k+1,j) = g(j) \mbox{ for } j \in \llbracket T_0, T_1 \rrbracket.$$
Then, $\mathfrak{L}$ satisfies the interlacing Gibbs property from Definition \ref{DefSGP} with $N = k+1$. 
\end{lemma}

\begin{lemma}\label{Lem.StrongGP} Assume the same notation as in Definition \ref{DefSGP}. Then, for any $\{ B_i \in \Omega(a, b, x_i , y_i) \}_{i = 1}^k$
\begin{equation}\label{SchurEqV2}
{\bf 1}_{A} \cdot \mathbb{P}\left( L_{i + k_1-1}\llbracket a,b \rrbracket = B_{i} \mbox{ for $i \in \llbracket 1, k \rrbracket$} \, \vert  \mathcal{F}_{\mathrm{ext}} \right) = {\bf 1}_{A} \cdot \mathbb{P}_{\ice, \operatorname{Geom}}^{a,b, \vec{x}, \vec{y}, f, g} \left( \cap_{i = 1}^k\{ Q_i = B_i \} \right),
\end{equation}
where $\mathcal{F}_{\mathrm{ext}} = \sigma(L_i(j): (i,j) \in \Sigma \times \llbracket T_0, T_1 \rrbracket \setminus \llbracket k_1, k_2 \rrbracket \times \llbracket a+1, b-1 \rrbracket)$.
\end{lemma}

%
%
\subsection{Some properties}\label{Section2.2} In this section we state some of the properties of line ensembles with law $\mathbb{P}_{\ice, \operatorname{Geom}}^{T_0,T_1, \vec{x}, \vec{y}, f, g}$ as in Definition \ref{DefAvoidingLawBer}. The first result is a lemma that shows that these ensembles can be monotonically coupled in their boundary data. Its proof is given in Appendix \ref{AppendixA1}. 

\begin{lemma}\label{MCL} Assume the same notation as in Definition \ref{DefAvoidingLawBer}. Fix $k \in \mathbb{N}$, $T_0, T_1 \in \mathbb{Z}$ with $T_0 < T_1$, two functions $g^t, g^b: \llbracket T_0, T_1 \rrbracket  \rightarrow [-\infty, \infty)$ as well as $\vec{x}\,^b, \vec{y}\,^b, \vec{x}\,^t, \vec{y}\,^t \in \mathfrak{W}_k$. We assume that 
$$g^t(r) \geq g^b(r) \mbox{ for } r \in \llbracket T_0, T_1 \rrbracket \mbox{, } x_i^t \geq x^b_i \mbox{, and } y_i^t \geq y_i^b \mbox{ for } i = 1, \dots, k,$$
and also that $\Omega_{\ice}(T_0, T_1, \vec{x}\,^b, \vec{y}\,^b, \infty,g^b)$ and $\Omega_{\ice}(T_0, T_1, \vec{x}\,^t, \vec{y}\,^t, \infty,g^t)$ are both non-empty. Then, there exists a probability space $(\Omega, \mathcal{F}, \mathbb{P})$, which supports two $\llbracket 1, k \rrbracket$-indexed geometric line ensembles $\mathfrak{Q}^t$ and $\mathfrak{Q}^b$ on $\llbracket T_0, T_1 \rrbracket$ such that the law of $\mathfrak{Q}^{t}$ {\big (}resp. $\mathfrak{Q}^b${\big )} under $\mathbb{P}$ is given by $\mathbb{P}_{\ice,\operatorname{Geom}}^{T_0, T_1, \vec{x}\,^t, \vec{y}\,^t, \infty, g^t}$ {\big (}resp. $\mathbb{P}_{\ice,\operatorname{Geom}}^{T_0, T_1, \vec{x}\,^b, \vec{y}\,^b, \infty, g^b}${\big )} and such that $\mathbb{P}$-almost surely $Q_i^t(r) \geq Q^b_i(r)$ for all $i \in \llbracket 1, k \rrbracket$ and $r \in \llbracket T_0, T_1 \rrbracket$.
\end{lemma}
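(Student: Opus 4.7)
The plan is to construct the coupling via heat-bath (Glauber) Markov chain dynamics with common randomness, preserving pointwise domination at every step. For each ensemble, consider the chain on the finite state space $\Omega_{\ice}(T_0, T_1, \vec{x}, \vec{y}, \infty, g)$ that at each step selects a uniformly random interior site $(i, r) \in \llbracket 1, k \rrbracket \times \llbracket T_0+1, T_1-1\rrbracket$ and resamples $Q_i(r)$ from its conditional law given the other coordinates. A direct computation using the constraints in (\ref{EventInter}) shows that this conditional law is uniform on the integer interval
\[ \bigl[\max(Q_i(r-1), Q_{i+1}(r+1)),\ \min(Q_i(r+1), Q_{i-1}(r-1))\bigr], \]
with the conventions $Q_0 \equiv \infty$ and $Q_{k+1} \equiv g$. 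The chain is irreducible and aperiodic on this finite state space, so it converges to its unique stationary distribution, which is $\mathbb{P}_{\ice, \operatorname{Geom}}^{T_0, T_1, \vec{x}, \vec{y}, \infty, g}$.

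The two chains (for the top and bottom ensembles) are run on a common probability space using the same selected site $(i_n, r_n)$ and the same $U_n \sim \operatorname{Uniform}[0,1]$ at each step, with each chain producing its new value from the quantile map $a + \lfloor U_n (b - a + 1) \rfloor$ on its respective integer interval $[a, b] \cap \mathbb{Z}$. The crucial monotonicity observation is that both endpoints of the update interval above are nondecreasing functions of the four relevant neighbor values and of the function $g$; thus if $\mathfrak{Q}^t \geq \mathfrak{Q}^b$ coordinate-wise before an update, the top's interval endpoints are at least as large as the bottom's, and the quantile coupling preserves $Q_i^{t, \text{new}}(r) \geq Q_i^{b, \text{new}}(r)$ while all other coordinates are untouched. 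This monotonicity step is the main technical obstacle: one must carefully verify it in all boundary cases, namely $r-1 = T_0$ or $r+1 = T_1$ (where the fixed entrance/exit data $x_i, y_i$ plays the role of a neighbor) and $i = k$ (where $g$ plays the role of the path below).

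For the initial data I would take $\mathfrak{Q}^{t,0}$ and $\mathfrak{Q}^{b,0}$ to be the pointwise maximal elements of the two state spaces. These exist because the constraints in (\ref{EventInter}) are closed under pointwise max --- a direct check, since the max of two increasing paths is increasing, and if $Q_i(r-1) \geq Q_{i+1}(r)$ and $Q'_i(r-1) \geq Q'_{i+1}(r)$ then $\max(Q_i, Q'_i)(r-1) \geq \max(Q_{i+1}, Q'_{i+1})(r)$ --- so each state space is a finite lattice under coordinate-wise order. An inductive description of the maxima (solve for $Q_1^{\max}$ first using monotonicity and the endpoint conditions; then $Q_2^{\max}$ using the interlacing with $Q_1^{\max}$; and so on, noting that each coordinate is determined by a $\min$ of monotone functions of the boundary data and the previously determined paths), together with the hypotheses $\vec{x}^t \geq \vec{x}^b$, $\vec{y}^t \geq \vec{y}^b$, and $g^t \geq g^b$, yields $Q^{t, \max} \geq Q^{b, \max}$ pointwise. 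Starting the coupled chain from this ordered pair and running for $T$ steps, the ordering is preserved for every $T$; since both marginals converge to the target distributions on the finite state spaces, a convergent subsequence of the joint laws (on the finite product space) produces a coupling of $\mathbb{P}_{\ice, \operatorname{Geom}}^{T_0, T_1, \vec{x}^t, \vec{y}^t, \infty, g^t}$ and $\mathbb{P}_{\ice, \operatorname{Geom}}^{T_0, T_1, \vec{x}^b, \vec{y}^b, \infty, g^b}$ with almost sure pointwise domination.
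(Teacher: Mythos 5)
Your proposal is correct and follows essentially the same route as the paper's proof in Appendix \ref{AppendixA1}: a single-site heat-bath Markov chain on each state space, coupled through a common random site and a common uniform variable fed into the quantile map $a + \lfloor U(b-a+1)\rfloor$, started from the maximal elements and shown to preserve the coordinate-wise order via monotonicity of the update interval's endpoints, with the coupling extracted as a subsequential limit of the joint laws. The only differences are cosmetic (the paper exhibits the maximal element explicitly as $Q_i^{\mathsf{max}}(T_0+s)=\min(x_{i-s},y_i)$ and proves irreducibility by an explicit path of elementary moves through it, whereas you invoke lattice closure under pointwise max and assert irreducibility), and these do not affect the validity of the argument.
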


We next state a strong coupling between geometric random walk bridges and Brownian bridges, which follows from the results in \cite{DW19}. If $W_t$ denotes a standard one-dimensional Brownian motion and $\sigma > 0$, then the process
\begin{equation}\label{S22E1}
B^{\sigma}_t = \sigma (W_t - t W_1), \hspace{5mm} 0 \leq t \leq 1,
\end{equation}
is called a {\em Brownian bridge (from $B^{\sigma}_0 = 0$ to $B_1^{\sigma} = 0$) with variance $\sigma^2$.} When $\sigma^2 =1$ we drop it from the notation in (\ref{S22E1}) and refer to the latter process as a {\em standard Brownian bridge}. 

With the above notation we state the strong coupling result we use.
\begin{proposition}\label{KMT} Fix $p \in (0, \infty)$ and set $\sigma = \sqrt{p(p+1)}$. For any $\epsilon, A > 0$ we can find $N_0$ large enough, depending on $p, \epsilon, A $, such that the following holds. For any $n \geq N_0$ and $z \in \mathbb{Z}_{\geq 0}$ such that $|z - pn| \leq A \sqrt{n}$ we can find a probability space that supports a Brownian bridge $B^{\sigma}$ with variance $\sigma^2$ and a random increasing path $\ell^{(n,z)}$ with law $\mathbb{P}_{\operatorname{Geom}}^{0, n, 0,z}$ such that 
\begin{equation}\label{S22E2}
\mathbb{P} \left( \Delta(n,z) \geq n^{1/4} \right) < \epsilon, \mbox{ where $\Delta(n,z):=  \sup_{0 \leq t \leq n} \left| \sqrt{n} B^\sigma_{t/n} + \frac{t}{n} \cdot z - \ell^{(n,z)}(t) \right|.$}
\end{equation}
\end{proposition}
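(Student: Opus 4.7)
The plan is to invoke the strong coupling theorem for random walk bridges in \cite{DW19} as a black box, and then to pass from its lattice statement to the continuous supremum in (\ref{S22E2}). First I would fix the geometric parameter $q = p/(1+p)$, so that an i.i.d.\ geometric$(q)$ increment $X_i$ has mean $p$ and variance $\sigma^2 = p(1+p)$. Since $\mathbb{P}_{\operatorname{Geom}}^{0,n,0,z}$ is by definition the uniform measure on $\Omega(0,n,0,z)$, and its description as a conditioned geometric random walk is valid for every $q \in (0,1)$, this choice is cost-free. Under this parametrization, $\ell^{(n,z)}$ has the same law as the partial sums $S_k = X_1 + \cdots + X_k$ conditioned on $S_n = z$, and the hypothesis $|z - pn| \leq A \sqrt{n}$ places the endpoint inside the typical Gaussian fluctuation window.

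Next I would apply the KMT-type coupling of \cite{DW19} to these partial sums (geometric increments have a moment generating function in a neighborhood of the origin, which is the main distributional hypothesis of that work). On a suitable probability space, this produces a Brownian bridge $B^\sigma$ of variance $\sigma^2$ on $[0,1]$ coupled to $\ell^{(n,z)}$ such that, for some $C = C(p,A,\epsilon)$ and all $n$ sufficiently large,
$$ \mathbb{P}\!\left( \max_{0 \leq k \leq n} \Bigl| \ell^{(n,z)}(k) - \tfrac{k}{n} z - \sqrt{n}\, B^\sigma_{k/n} \Bigr| \geq C \log n \right) < \epsilon/2. $$
The assumption $|z - pn| \leq A \sqrt{n}$ is precisely what guarantees that this endpoint-conditioned statement falls within the Gaussian range to which \cite{DW19} applies.

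It remains to extend this lattice estimate to all $t \in [0,n]$. For $t \in [k,k+1]$, linear interpolation yields $|\ell^{(n,z)}(t) - \ell^{(n,z)}(k)| \leq \ell^{(n,z)}(k+1) - \ell^{(n,z)}(k)$, a single conditioned increment. A union bound using the (uniform in $n$) exponential tails of conditioned geometric increments shows that $\max_{0 \leq k < n}\bigl(\ell^{(n,z)}(k+1) - \ell^{(n,z)}(k)\bigr) = O(\log n)$ with probability at least $1 - \epsilon/4$. On the Brownian side, the Hölder-$1/2$ modulus of $B^\sigma$ on scale $1/n$ contributes at most $\sqrt{n} \cdot O\!\bigl(\sqrt{n^{-1} \log n}\bigr) = O(\sqrt{\log n})$ uniformly, with probability at least $1 - \epsilon/4$. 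Combining the three estimates yields $\Delta(n,z) = O(\log n)$ with probability at least $1 - \epsilon$, and since $\log n = o(n^{1/4})$, choosing $N_0$ large enough forces $\Delta(n,z) < n^{1/4}$ on this event. The main obstacle is simply verifying that \cite{DW19} applies in the form above; as remarked just after the statement of Theorem \ref{S1T1}, only a mild version of that coupling is needed, so this step is essentially a direct citation rather than new analysis.
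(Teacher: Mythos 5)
Your proposal is correct and follows essentially the same route as the paper: invoke the strong coupling of \cite{DW19} for geometric random walk bridges, use the hypothesis $|z-pn|\leq A\sqrt{n}$ to control the endpoint-dependent factor uniformly, and conclude because the resulting coupling error is $o(n^{1/4})$. The only difference is cosmetic: the paper applies \cite[Theorem 4]{DW19} in its exponential-moment form, which already bounds the supremum over all real $t\in[0,n]$, and then uses Chebyshev's inequality, so your extra interpolation step (max conditioned increment plus Brownian modulus of continuity) is a valid but unnecessary addition.
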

\begin{proof} We fix $q \in (0,1)$ such that $p = \frac{q}{1-q}$. From \cite[Example 2, page 719]{DW19} we have that the geometric distribution satisfies the conditions of \cite[Theorem 4]{DW19}. The latter ensures the existence of constants $0< C, a, \alpha' < \infty$ (depending on $p$ alone) such that for any integer $n$ and any $z \in \mathbb{Z}_{\geq 0}$ we can find a Brownian bridge $B^{\sigma}$ with variance $\sigma^2 = p(1+p)$ and a random increasing path $\ell^{(n,z)}$ with law $\mathbb{P}_{\operatorname{Geom}}^{0, n, 0,z}$ defined on the same probability space such that 
$$\mathbb{E}\left[e^{a \Delta(n,z)} \right] \leq C n^{\alpha'} e^{(z- pn)^2/n}.$$
Using the latter and Chebyshev's inequality gives for $|z - pn| \leq A \sqrt{n}$
$$\mathbb{P} \left( \Delta(n,z) \geq n^{1/4} \right) \leq e^{-a n^{1/4}} \mathbb{E}\left[e^{a \Delta(n,z)} \right] \leq C n^{\alpha'} e^{A^2 - an^{1/4}}, $$
which implies the statement of the proposition.
\end{proof}

Before we turn to the next and last statement of this section we introduce a bit of notation. If $a,b, x,y \in \mathbb{R}$ with $a < b$ we define a random element in $C([a,b])$ via
\begin{equation}\label{S22E3}
B(t) = (b-a)^{1/2} \cdot \tilde{B} \left( \frac{t-a}{b-a} \right) + \left(\frac{b-t}{b-a} \right) \cdot x + \left( \frac{t-a}{b-a}\right) \cdot y,
\end{equation}
where $\tilde{B}$ is a standard Brownian bridge. We refer to $B(t)$ in (\ref{S22E3}) as a {\em Brownian bridge} (from $B(a) = x$ to $B(b) = y$) with variance $1$, and denote its distribution as $\mathbb{P}_{\operatorname{free}}^{a,b,x,y}$. If $\vec{x}, \vec{y} \in \mathbb{R}^k$, then we denote by $\mathbb{P}_{\operatorname{free}}^{a,b,\vec{x},\vec{y}}$ the law of $k$ independent Brownian bridges $\{B_i\}_{i = 1}^k$ from $B_i(a) = x_i$ to $B_i(b) = y_i$ all with variance $1$. For $k \in \mathbb{N}$ we let $W_k^{\circ}$ denote the open {\em Weyl chamber} in $\mathbb{R}^k$
\begin{equation}\label{S22E4}
W_k^{\circ} = \{\vec{x} = (x_1, \dots, x_k) \in \mathbb{R}^k: x_1 > x_2 > \cdots > x_k\}.
\end{equation}

\begin{definition}\label{AvoidBB}
Fix $k \in \mathbb{N}$. Let $\vec{x}, \vec{y} \in W_k^{\circ}$, $a,b \in \mathbb{R}$ with $a < b$ and $f: [a,b] \rightarrow (-\infty, \infty]$ and $g: [-\infty, \infty)$ be two continuous functions. The latter means that either $f: [a,b] \rightarrow \mathbb{R}$ is continuous or $f = \infty$ everywhere and similarly for $g$. We also assume that $f(t) > g(t)$ for all $t \in [a,b]$, $f(a) > x_1$, $f(b) > y_1$, $g(a) < x_k$ and $g(b) < y_k$. 

With the above data we define the $(f,g)$-avoiding Brownian line ensemble on the interval $[a,b]$ with entrance data $\vec{x}$ and exit data $\vec{y}$ to be the $\llbracket 1 , k \rrbracket$-indexed line ensemble $\mathcal{Q}$ on $\Lambda = [a,b]$ whose law is the same as that of $\{B_i\}_{i = 1}^k$ (with law $\mathbb{P}_{\operatorname{free}}^{a,b, \vec{x}, \vec{y}}$), conditioned on the event
$$E = \{f(r) > B_1(r) > B_2(r) > \cdots > B_k(r) > g(r) \mbox{ for all $r \in [a,b]$} \}.$$
We denote the law of $\mathcal{Q}$ by $\mathbb{P}_{\operatorname{avoid}}^{a,b,\vec{x},\vec{y},f,g}$, and expectations with respect to this measure by $\mathbb{E}_{\operatorname{avoid}}^{a,b,\vec{x},\vec{y},f,g}$. 
\end{definition}
\begin{remark} We refer the reader to \cite[Definition 2.7]{DEA21} for more details on $\mathbb{P}_{\operatorname{avoid}}^{a,b,\vec{x},\vec{y},f,g}$ and in particular why it is well-defined.
\end{remark}

With the above notation in place we can state our final result of the section, Lemma \ref{lem:RW} below. In plain words, the lemma states that if we have a sequence of $(F_n,G_n)$-interlacing geometric line ensembles $\mathfrak{Q}^n$ with laws $\mathbb{P}_{\ice, \operatorname{Geom}}^{A_n,B_n, \vec{X}^n, \vec{Y}^n, F_n, G_n}$ as in Definition \ref{DefAvoidingLawBer} whose boundary data converges regularly to some limiting boundary data, then under an appropriate diffuse scaling the ensembles $\mathfrak{Q}_n$ converges to an $(f,g)$-avoiding Brownian line ensemble as in Definition \ref{AvoidBB}. The proof of Lemma \ref{lem:RW} is given in Appendix \ref{AppendixA2}.

\begin{lemma}\label{lem:RW} Let $k, \vec{x}, \vec{y}, a,b, f,g$ be as in the first paragraph of Definition \ref{AvoidBB}. Suppose that $d_n$ is a sequence of positive reals such that $d_n \rightarrow \infty$ as $n \rightarrow \infty$, and set $A_n = \lfloor a d_n \rfloor$, $B_n = \lceil b d_n \rceil$. Let $f_n: [d_n^{-1}A_n, d_n^{-1} B_n] \rightarrow (-\infty, \infty]$ and $g_n: [d_n^{-1}A_n, d_n^{-1} B_n] \rightarrow [-\infty, \infty)$ be continuous functions such that $f_n \rightarrow f$ and $g_n \rightarrow g$ uniformly on $[a,b]$. In the case when $f = \infty$, the last statement means that $f_n = \infty$ for all large $n$, and when $g = -\infty$, the last statement means that $g_n = -\infty$ for all large $n$. We also suppose that 
\begin{equation}\label{EdgeLim}
\lim_{n \rightarrow \infty} |g_n(d_n^{-1}B_n) - g_n(b)| = 0 \mbox{ if } g \neq -\infty \mbox{ and }\lim_{n \rightarrow \infty} |f_n(d_n^{-1}A_n) - f_n(a)| = 0 \mbox{ if } f \neq -\infty.
\end{equation}

Fix $p \in (0,\infty)$, $\sigma = \sqrt{p(1+p)}$ and suppose that $\vec{X}^n, \vec{Y}^n \in \mathfrak{W}_k$ are sequences such that 
\begin{equation}\label{SideLim}
\lim_n \sigma^{-1} d_n^{-1/2} (X_i^n - p A_n)  = x_i \mbox{ and } \lim_n \sigma^{-1} d_n^{-1/2} (Y_i^n - p B_n)   = y_i \mbox{ for } i = 1, \dots, k.
\end{equation}
Define $F_n: [A_n, B_n] \rightarrow (-\infty, \infty]$ and $G_n: [A_n, B_n] \rightarrow [-\infty, \infty)$ through
$$F_n(t) = \sigma d_n^{1/2} \cdot f_n(t/d_n) + pt \mbox{ and } G_n(t) = \sigma d_n^{1/2} \cdot g_n(t/d_n) + p t.$$
Then, we have the following statements.
\begin{enumerate}
\item There exists $N_0 \in \mathbb{N}$ such that for $n \geq N_0$ the laws $\mathbb{P}_{\ice, \operatorname{Geom}}^{A_n,B_n, \vec{X}^n, \vec{Y}^n, F_n, G_n}$ are well-defined, i.e. the sets $\Omega_{\ice}(A_n, B_n, \vec{X}^n, \vec{Y}^n, F_n, G_n)$ are non-empty.
\item If $\mathfrak{Q}^n$ has law $\mathbb{P}_{\ice, \operatorname{Geom}}^{A_n,B_n, \vec{X}^n, \vec{Y}^n, F_n, G_n}$, then the sequence of $\llbracket 1,k \rrbracket$-indexed line ensembles $\mathcal{Q}^n$ on $[a,b]$ defined by
\begin{equation}\label{S22E5}
\mathcal{Q}_i^n(t) = \sigma^{-1} d_n^{-1/2} \cdot \left( Q_i(t d_n) - td_n p \right) \mbox{ for } n \geq N_0, \hspace{2mm} t \in [a,b] \mbox{ and } i = 1, \dots, k,
\end{equation}
converges weakly to $\mathbb{P}_{\operatorname{avoid}}^{a,b,\vec{x},\vec{y},f,g}$ from Definition \ref{AvoidBB} as $n \rightarrow \infty$.
\end{enumerate}
\end{lemma}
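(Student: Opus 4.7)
The plan is to lift weak convergence from the unconditioned free-bridge measure to the conditioned interlacing measure using the strong coupling of Proposition \ref{KMT}, together with the fact that the limiting avoidance event has positive probability.

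For claim (1), since $\vec{x}, \vec{y} \in W_k^{\circ}$ with $f > x_1, y_1$ and $g < x_k, y_k$ strictly at the endpoints, (\ref{SideLim}) together with the uniform convergence of $f_n, g_n$ implies that the rescaled coordinates of $\vec{X}^n, \vec{Y}^n$ and the shifted boundaries $F_n, G_n$ have pairwise gaps of order $d_n^{1/2}$ for large $n$. One may then construct an explicit interlacing collection by stacking appropriately spaced increasing paths between $F_n$ and $G_n$, witnessing $\Omega_{\ice}(A_n, B_n, \vec{X}^n, \vec{Y}^n, F_n, G_n) \neq \emptyset$.

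For claim (2), let $\tilde{\mathcal{Q}}^n$ denote the diffusely rescaled $k$ independent geometric bridges with law $\mathbb{P}_{\operatorname{Geom}}^{A_n, B_n, \vec{X}^n, \vec{Y}^n}$ (i.e. without conditioning on interlacing). Applying Proposition \ref{KMT} to each coordinate separately (after translating so each bridge starts at $0$) and using (\ref{SideLim}), I place $\tilde{\mathcal{Q}}^n$ and $k$ independent Brownian bridges $\tilde{B}_i$ of variance $1$ from $\tilde{B}_i(a) = x_i$ to $\tilde{B}_i(b) = y_i$ on a common probability space with $\sup_{t \in [a,b]} |\tilde{\mathcal{Q}}^n_i(t) - \tilde{B}_i(t)| \to 0$ in probability. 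In particular the rescaled free law converges weakly to $\mathbb{P}_{\operatorname{free}}^{a,b,\vec{x},\vec{y}}$. The main step is then to show that the interlacing event $\ice$ from (\ref{EventInter}) converges to the avoidance event $E$ of Definition \ref{AvoidBB}. After rescaling, $Q_i(r-1) \geq Q_{i+1}(r)$ becomes
\begin{equation*}
\tilde{\mathcal{Q}}^n_i((r-1)/d_n) - \tilde{\mathcal{Q}}^n_{i+1}(r/d_n) \geq p \sigma^{-1} d_n^{-1/2},
\end{equation*}
and analogous comparisons hold against $f_n, g_n$; the edge condition (\ref{EdgeLim}) is exactly what makes the asymmetry between $r-1$ and $r$ disappear at the boundary times. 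As $n \to \infty$ the right-hand side vanishes and the two time arguments collapse to a common $t \in [a,b]$, so on the coupling space $\ice$ is sandwiched between slightly tightened and relaxed versions of $E$ for $(\tilde{B}_1, \dots, \tilde{B}_k)$ that differ by $o(1)$. Since the $\tilde{B}_i$ are a.s. continuous with strictly ordered endpoints, and $f, g$ are continuous and strictly separated from the $\tilde{B}_i$ at $a, b$, standard Brownian-bridge facts yield $\mathbf{1}_{\ice}(\tilde{\mathcal{Q}}^n) \to \mathbf{1}_E(\tilde{B})$ in probability and $\mathbb{P}_{\operatorname{free}}^{a,b,\vec{x},\vec{y}}(E) > 0$. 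For bounded continuous $h$ on $C(\llbracket 1, k \rrbracket \times [a,b])$, dominated convergence then gives
\begin{equation*}
\mathbb{E}[h(\mathcal{Q}^n)] = \frac{\mathbb{E}[h(\tilde{\mathcal{Q}}^n) \mathbf{1}_{\ice}]}{\mathbb{P}(\ice)} \longrightarrow \frac{\mathbb{E}_{\operatorname{free}}^{a,b,\vec{x},\vec{y}}[h \cdot \mathbf{1}_E]}{\mathbb{P}_{\operatorname{free}}^{a,b,\vec{x},\vec{y}}(E)} = \mathbb{E}_{\operatorname{avoid}}^{a,b,\vec{x},\vec{y},f,g}[h],
\end{equation*}
which is the asserted weak convergence of $\mathcal{Q}^n$.

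The main technical obstacle is controlling the sandwich error: one needs that the limiting Brownian bridge system almost surely does not come within $d_n^{-1/2}$ of the boundary of $E$. This reduces to showing that the minimum gap process $\inf_t \min_i(\tilde{B}_i(t) - \tilde{B}_{i+1}(t))$, together with its analogues against $f$ and $g$, has a continuous distribution at $0$. This is a classical but nontrivial absolute-continuity argument for Brownian bridges, slightly more delicate when $f$ or $g$ is finite; the cases $f = \infty$ or $g = -\infty$ are simpler, since the corresponding constraints drop out entirely.
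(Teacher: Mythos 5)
Your proposal follows essentially the same route as the paper's proof in Appendix \ref{AppendixA2}: couple the unconditioned rescaled geometric bridges with free Brownian bridges via Proposition \ref{KMT}, show that the interlacing indicator converges to the avoidance indicator (the ``minimum gap has no atom at $0$'' fact you defer is exactly the no-touching-without-crossing statement from \cite[Corollary 2.9]{CorHamA}, which the paper isolates as Lemma \ref{NoTouch}), and divide by the limiting positive probability of the avoidance event. The one under-justified step is claim (1): since Definition \ref{AvoidBB} only assumes $f>g$ everywhere plus separation at the endpoints $a,b$, the function $f$ may dip below $x_1$ in the interior, so the gaps between $F_n$, the chords of $\vec{X}^n,\vec{Y}^n$, and $G_n$ need not be of order $d_n^{1/2}$ away from the endpoints and a naive ``stacking'' fails; the construction must instead thread paths along intermediate continuous functions $h_i$ squeezed strictly between $g$ and $f$ with $h_i(a)=x_i$, $h_i(b)=y_i$ --- which is what the paper does (tubes of radius $\epsilon$ around such $h_i$, given positive probability by Lemma \ref{CHL}) --- although non-emptiness for large $n$ also follows from your own claim-(2) machinery, since $\mathbb{P}^{A_n,B_n,\vec{X}^n,\vec{Y}^n}_{\operatorname{Geom}}(\ice)\to\mathbb{P}^{a,b,\vec{x},\vec{y}}_{\operatorname{free}}(E)>0$.
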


%
\section{Estimates for single-curve ensembles}\label{Section3} In this section we derive various statements about the measures $\mathbb{P}_{\ice, \operatorname{Geom}}^{T_0,T_1, \vec{x}, \vec{y}, f, g}$ from Definition \ref{DefAvoidingLawBer} when $k = 1$, i.e. when these ensembles contain a single curve. To keep the notation simple we assume $T_0 = 0$, $T_1 = n$, and focus on our primary case of interest when $f = \infty$. We mention that in the special case when $g = -\infty$ we have that $\mathbb{P}_{\ice, \operatorname{Geom}}^{0,n, x, y, \infty, -\infty} = \mathbb{P}_{ \operatorname{Geom}}^{0,n, x, y}$. The main tools we use in deriving the results of this section are the monotone coupling from Lemma \ref{MCL} and the strong coupling from Proposition \ref{KMT}. We continue with the same notation as in Section \ref{Section2}.

%
\subsection{No concentration in the bulk}\label{Section3.1} In this section we show that if $Q_1$ has law $\mathbb{P}_{ \operatorname{Geom}}^{0,n, x, y}$, then for any $t \in (0,1)$ the distribution of the random variables $n^{-1/2}(Q_1(tn) -ptn)$ does not concentrate on small intervals. The latter is expected since by Proposition \ref{KMT} the variables $n^{-1/2}(Q_1(tn) -ptn)$ are approximately Gaussian with a non-zero variance, and the latter are diffuse. The precise statement and proof are as follows.

\begin{lemma}\label{S31L} Fix $p, \eta, M \in (0, \infty)$ and $\delta_1 \in (0,1/2)$. There exist $N_1 \in \mathbb{N}$ and $\delta > 0$, depending on $p, \eta, M, \delta_1$, such that the following holds. If $n \geq N_1$, $x, y \in \mathbb{Z}$ are such that $x \leq y$ and $|x| \leq M n^{1/2}$, $|y - pn| \leq M n^{1/2}$, $t \in [\delta_1, 1-\delta_1]$, and $a \in \mathbb{R}$, we have
\begin{equation}\label{S31E1}
\mathbb{P}_{ \operatorname{Geom}}^{0,n, x, y} \left( n^{-1/2} (Q_1(tn) - ptn) \in [a -\delta, a + \delta ]  \right) < \eta.
\end{equation}
\end{lemma}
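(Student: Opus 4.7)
The plan is to reduce the statement to a density estimate for a Brownian bridge via the strong coupling in Proposition \ref{KMT}. The key observation is that under $\mathbb{P}_{\operatorname{Geom}}^{0,n,x,y}$ the process $\tilde{Q}_1(s) = Q_1(s) - x$ has the law $\mathbb{P}_{\operatorname{Geom}}^{0,n,0,z}$ with $z = y - x$, and the bounds $|x| \leq M n^{1/2}$, $|y - pn| \leq Mn^{1/2}$ give $|z - pn| \leq 2Mn^{1/2}$, so Proposition \ref{KMT} applies with $A = 2M$.

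First, given $\eta > 0$, I would apply Proposition \ref{KMT} with $\epsilon = \eta/2$ and $A = 2M$ to obtain $N_0$ and a coupling, for each admissible $(n,z)$, of $\tilde{Q}_1$ to a Brownian bridge $B^\sigma$ of variance $\sigma^2 = p(1+p)$, such that $\Delta(n,z) < n^{1/4}$ with probability at least $1 - \eta/2$. On this good event,
\[
n^{-1/2}(Q_1(tn) - ptn) = \mu_n(t) + B^\sigma_t + R_n(t), \qquad |R_n(t)| \leq n^{-1/4},
\]
where $\mu_n(t) := (1-t) n^{-1/2} x + t n^{-1/2}(y - pn)$ is deterministic (note the $pt$ cancels against $t n^{-1/2} \cdot pn$).

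Second, the event in (\ref{S31E1}) is then contained, up to probability $\eta/2$, in
\[
\bigl\{ B^\sigma_t \in [a - \mu_n(t) - \delta - n^{-1/4},\, a - \mu_n(t) + \delta + n^{-1/4}]\bigr\}.
\]
The random variable $B^\sigma_t$ is centered Gaussian with variance $\sigma^2 t(1-t)$, and for $t \in [\delta_1, 1-\delta_1]$ this variance is bounded below by $\sigma^2 \delta_1(1-\delta_1) > 0$. Hence the density of $B^\sigma_t$ is uniformly bounded above by some constant $C_1 = C_1(p,\delta_1)$, and the probability above is at most $2C_1(\delta + n^{-1/4})$.

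Finally, I would choose $\delta > 0$ so small that $2C_1 \delta \leq \eta/4$, and then $N_1 \geq N_0$ so large that $2C_1 n^{-1/4} \leq \eta/4$ for $n \geq N_1$. Adding these contributions with the coupling error $\eta/2$ gives total probability at most $\eta$, as required. No step is an obstacle: the only input besides the elementary Gaussian density bound is the strong coupling of Proposition \ref{KMT}, which has already been established. The value of the estimate lies precisely in the uniformity of $N_1$ and $\delta$ over the allowed range of $(x,y,t,a)$, which follows because $\mu_n(t)$ only shifts the interval and the Gaussian density bound $C_1$ depends only on $p$ and $\delta_1$.
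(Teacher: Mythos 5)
Your proposal is correct and follows essentially the same route as the paper's proof: translate to reduce to $\mathbb{P}_{\operatorname{Geom}}^{0,n,0,z}$ with $|z-pn|\leq 2Mn^{1/2}$, invoke Proposition \ref{KMT} with $A=2M$ and coupling error $\eta/2$, and finish with the uniform Gaussian density bound for $B^\sigma_t$ on $t\in[\delta_1,1-\delta_1]$. The only cosmetic difference is that the paper fixes $\delta$ via an explicit bound $4\delta/(\sqrt{2\pi}\,\sigma t(1-t))<\eta/2$ and requires $n^{-1/4}<\delta$, whereas you budget the errors as $\eta/2+\eta/4+\eta/4$; both are the same argument.
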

\begin{proof} By translation we have
$$\mathbb{P}_{ \operatorname{Geom}}^{0,n, x, y} \left( n^{-1/2} (Q_1(tn) - ptn) \in [a,b]  \right) = \mathbb{P}_{ \operatorname{Geom}}^{0,n, 0, y-x} \left( n^{-1/2} (Q_1(tn) - ptn) \in [a-c,b-c]  \right),  $$
where $c = xn^{-1/2}$. We conclude that it suffices to show that we can find $N_1 \in \mathbb{N}$ and $\delta > 0$ such that if $n \geq N_1$, $z \in \mathbb{Z}_{\geq 0}$ with $|z - pn| \leq 2M n^{1/2}$, $t \in [\delta_1, 1- \delta_1]$ and $a \in \mathbb{R}$ we have
\begin{equation}\label{S31E2}
\mathbb{P}_{ \operatorname{Geom}}^{0,n, 0, z} \left( n^{-1/2} (Q_1(tn) - ptn) \in [a -\delta, a + \delta ]  \right) < \eta.
\end{equation}

Let $\delta > 0$ be small enough so that $\frac{4\delta}{\sqrt{2\pi} \sigma t(1-t)} < \eta/2$ for all $t \in [\delta_1, 1- \delta_1]$ and observe that if $B^{\sigma}$ is a Brownian bridge (from $B^{\sigma}_0 = 0$ to $B_1^{\sigma} = 0$) with variance $\sigma^2 = p(1+p)$ we have for $t \in [\delta_1, 1- \delta_1]$ and any interval $I$ of length at most $4\delta$ that
\begin{equation}\label{S31E3}
\mathbb{P}(B^{\sigma}_t \in I) = \int_{I} \frac{e^{-x^2/[2\sigma^2 t (1-t)]}}{\sqrt{2\pi} \sigma t(1-t)} \leq \int_{I} \frac{1}{\sqrt{2\pi} \sigma t(1-t)} \leq \frac{4\delta}{\sqrt{2\pi} \sigma t(1-t)} < \eta/2.
\end{equation}
This specifies our choice of $\delta$. Let $N_0$ be as in the statement of Proposition \ref{KMT} for $A = 2M$, $\epsilon = \eta/2$ and $p$ as in the statement of the present lemma. Let $N_1$ be sufficiently large so that $N_1 \geq N_0$ and for $n \geq N_1$ we have $n^{-1/4} < \delta$. We proceed to show (\ref{S31E2}) holds for the above choice of $\delta$ and $n \geq N_1$.

By Proposition \ref{KMT} we can find a probability space $(\Omega, \mathcal{F},\mathbb{P})$ supporting a random increasing path $\ell^{(n,z)}$ with law $\mathbb{P}_{\operatorname{Geom}}^{0,n, 0, z}$ as well as a Brownian bridge $B^{\sigma}$ such that for $ n \geq N_1$
\begin{equation}\label{S31E4}
\mathbb{P}( \Delta(n,z) \geq n^{1/4} ) < \eta/2,
\end{equation}
where we recall that $\Delta(n,z)$ is as in (\ref{S22E2}). By the triangle inequality we have for all $t \in [0,1]$
$$\left| n^{-1/2} (\ell^{(n,z)}(tn) - ptn) - B_{t}^{\sigma} + c(t,n)\right| \leq n^{-1/2}\Delta(n,z), \mbox{ where } c(t,n) = ptn^{1/2} - t zn^{-1/2} .$$
Combining the latter with the inequality $n^{-1/4} < \delta$ we conclude for all $a \in \mathbb{R}$ and $t \in [\delta_1, 1- \delta_1]$
\begin{equation}\label{S31E5}
\begin{split}
&\mathbb{P}\left( n^{-1/2} (\ell^{(n,z)}(tn) - ptn) \in [a -\delta, a + \delta ]  \right) \leq \mathbb{P}( \Delta(n,z) \geq n^{1/4} ) \\
& + \mathbb{P}( B^{\sigma}_t - c(t,n) \in [a-2\delta, a + 2\delta]) < \eta/2 + \eta/2= \eta ,
\end{split}
\end{equation}
where in the last inequality we used (\ref{S31E3}) and (\ref{S31E4}). Since under $\mathbb{P}$ the law of $\ell^{(n,z)}$ is $\mathbb{P}_{\operatorname{Geom}}^{0,n, 0, z}$, we see that (\ref{S31E5}) implies (\ref{S31E2}) and hence the lemma.
\end{proof}

%
\subsection{Being high sometimes}\label{Section3.2} In this section we show that if $Q_1$ has law $\mathbb{P}_{ \operatorname{Geom}}^{0,n, x, y, \infty, g}$, and $x,y$ are not too low, then neither is $Q_1(s)$ for any $s \in [0,n]$ at least some of the time. The latter is expected, since by Proposition \ref{KMT} the curve $Q_1$ behaves like a Brownian bridge $B$ of appropriate variance between $(0,x)$ and $(n,y)$ and $B(s) \geq (1-s/n)x + (s/n)y$ with probability $1/2$.

\begin{lemma}\label{S32L} Fix $p, M\in (0, \infty)$. There exists $N_2 \in \mathbb{N}$, depending on $p$ and $M$, such that the following holds. If $M_1, M_2 \in \mathbb{R}$ are such that $|M_1 - M_2| \leq M$, $n \geq N_2$, $x, y \in \mathbb{Z}$ are such that $x \leq y$ and $x \geq M_1 n^{1/2}$, $y \geq pn + M_2 n^{1/2}$, $g: \llbracket 0, n \rrbracket \rightarrow [-\infty, \infty)$ is arbitrary such that $\Omega_{\ice}(0,n,x,y, \infty, g) \neq \emptyset$, and $s \in [0, n]$, then we have
\begin{equation}\label{S32E1}
\mathbb{P}_{\ice, \operatorname{Geom}}^{0,n, x, y, \infty, g} \left( Q_1(s) \geq \frac{n - s}{n} \cdot M_1 n^{1/2} + \frac{s}{n} \cdot (pn + M_2 n^{1/2}) - 3n^{1/4} \right) \geq \frac{1}{3}.
\end{equation}
\end{lemma}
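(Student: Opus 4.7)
The plan is to (i) remove the lower boundary $g$ and lower $(x,y)$ to their worst-case values by monotone coupling, thus reducing to a free geometric bridge; (ii) compare that bridge to a Brownian bridge via the strong coupling of Proposition \ref{KMT}; and (iii) exploit the fact that a Brownian bridge is nonnegative at any fixed time with probability at least $1/2$.

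First, set $\tilde{x} = \lceil M_1 n^{1/2} \rceil$ and $\tilde{y} = \lceil p n + M_2 n^{1/2} \rceil$. Since $x, y \in \mathbb{Z}$ and $x \geq M_1 n^{1/2}$, $y \geq pn + M_2 n^{1/2}$, we have $\tilde{x} \leq x$ and $\tilde{y} \leq y$. For $n$ large enough (depending on $p, M$) one also has $\tilde{x} \leq \tilde{y}$, so $\Omega_{\ice}(0,n,\tilde{x},\tilde{y},\infty,-\infty) = \Omega(0,n,\tilde{x},\tilde{y})$ is non-empty. Lemma \ref{MCL} then produces a coupling of $Q_1 \sim \mathbb{P}_{\ice,\operatorname{Geom}}^{0,n,x,y,\infty,g}$ with $\tilde{Q} \sim \mathbb{P}_{\operatorname{Geom}}^{0,n,\tilde{x},\tilde{y}}$ such that $Q_1(s) \geq \tilde{Q}(s)$ almost surely. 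Moreover, since $M_1 n^{1/2} \leq \tilde{x}$ and $pn + M_2 n^{1/2} \leq \tilde{y}$, the linear interpolation satisfies $\tfrac{n-s}{n} M_1 n^{1/2} + \tfrac{s}{n}(pn + M_2 n^{1/2}) \leq \tfrac{n-s}{n}\tilde{x} + \tfrac{s}{n}\tilde{y}$. Hence it suffices to prove
\[
\mathbb{P}_{\operatorname{Geom}}^{0,n,\tilde{x},\tilde{y}}\!\left(\tilde{Q}(s) \geq \tfrac{n-s}{n}\tilde{x} + \tfrac{s}{n}\tilde{y} - 3 n^{1/4}\right) \geq \tfrac{1}{3}.
\]

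Second, translating by $\tilde{x}$, the random path $Y(t) := \tilde{Q}(t) - \tilde{x}$ has law $\mathbb{P}_{\operatorname{Geom}}^{0,n,0,z}$ with $z = \tilde{y} - \tilde{x}$, and the desired event becomes $\{Y(s) \geq \tfrac{s}{n} z - 3 n^{1/4}\}$. From the definitions of $\tilde{x}, \tilde{y}$ and $|M_1 - M_2| \leq M$, we have $|z - p n| \leq (M+1) n^{1/2}$ for $n$ large. Proposition \ref{KMT} (applied with $A = M+1$, $\epsilon = 1/6$) then yields, for all sufficiently large $n$, a coupling of $Y$ with a Brownian bridge $B^\sigma$ of variance $\sigma^2 = p(1+p)$ such that
\[
\mathbb{P}\!\left(\Delta(n,z) \geq n^{1/4}\right) < \tfrac{1}{6}, \qquad \Delta(n,z) = \sup_{0 \leq t \leq n} \left| \sqrt{n}\, B^\sigma_{t/n} + \tfrac{t}{n} z - Y(t) \right|.
\]
On the event $\{\Delta(n,z) < n^{1/4}\}$ we have $Y(s) \geq \sqrt{n}\, B^\sigma_{s/n} + \tfrac{s}{n} z - n^{1/4}$, so whenever $B^\sigma_{s/n} \geq 0$ we obtain $Y(s) \geq \tfrac{s}{n} z - n^{1/4} \geq \tfrac{s}{n} z - 3 n^{1/4}$.

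Finally, for any $s \in [0,n]$, the marginal $B^\sigma_{s/n}$ is either identically $0$ (at the endpoints $s \in \{0,n\}$) or a centered Gaussian (for $s/n \in (0,1)$); in both cases $\mathbb{P}(B^\sigma_{s/n} \geq 0) \geq 1/2$. Combining,
\[
\mathbb{P}\!\left(Y(s) \geq \tfrac{s}{n} z - 3 n^{1/4}\right) \geq \mathbb{P}\!\left(\{B^\sigma_{s/n} \geq 0\} \cap \{\Delta(n,z) < n^{1/4}\}\right) \geq \tfrac{1}{2} - \tfrac{1}{6} = \tfrac{1}{3},
\]
which completes the proof upon choosing $N_2$ large enough for all the above to apply. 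There is no real obstacle here; the only care needed is bookkeeping the rounding errors between $M_i n^{1/2}$ and $\lceil M_i n^{1/2}\rceil$ (which are $O(1)$ and absorbed by the $n^{1/4}$ slack for $n$ large) and verifying the hypotheses of Lemma \ref{MCL} and Proposition \ref{KMT}.
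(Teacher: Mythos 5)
Your proposal is correct and follows essentially the same route as the paper's proof: monotone coupling (Lemma \ref{MCL}) down to a free geometric bridge with rounded worst-case endpoints, translation to a bridge from $0$ to $z$, the strong coupling of Proposition \ref{KMT} with $\epsilon = 1/6$, and the fact that the Brownian bridge marginal is nonnegative with probability at least $1/2$. The only (harmless) cosmetic difference is that you round up rather than down, which lets the interpolated line for $(\tilde{x},\tilde{y})$ dominate the target line directly and avoids the paper's extra $O(1)$ error term from taking floors.
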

\begin{proof} Let $N_0$ be as in the statement of Proposition \ref{KMT} for $A = M + 2$, $\epsilon = 1/6$ and $p$ as in the statement of the present lemma. Let $N_2$ be sufficiently large so that $N_2 \geq N_0$ and for $n \geq N_2$ we have $A_n = \lfloor M_1 n^{1/2} \rfloor \leq \lfloor p n + M_2 n^{1/2} \rfloor = B_n$. We proceed to show (\ref{S32E1}) holds for $n \geq N_2$.

From the monotone coupling in Lemma \ref{lem:RW} it suffices to show
\begin{equation}\label{S32E2}
\mathbb{P}_{\ice, \operatorname{Geom}}^{0,n, A_n, B_n, \infty, -\infty} \left( Q_1(s) \geq \frac{n - s}{n} \cdot M_1 n^{1/2} + \frac{s}{n} \cdot (pn + M_2 n^{1/2}) - 3n^{1/4} \right) \geq \frac{1}{3}.
\end{equation}
Notice that $\mathbb{P}_{\ice, \operatorname{Geom}}^{0,n, A_n, B_n, \infty, -\infty} = \mathbb{P}_{ \operatorname{Geom}}^{0,n, A_n, B_n}$ and so by translation it suffices to prove
\begin{equation}\label{S32E3}
\mathbb{P}_{\operatorname{Geom}}^{0,n, 0, B_n - A_n} \left( Q_1(s) \geq \frac{n - s}{n} \cdot M_1 n^{1/2} + \frac{s}{n} \cdot (pn + M_2 n^{1/2}) - 3 n^{1/4} - A_n \right) \geq \frac{1}{3}.
\end{equation}
Setting $z = B_n - A_n$, we have that $|z - pn| \leq (|M_1 - M_2| + 2) \cdot n^{1/2} \leq A \cdot n^{1/2}$ and so by Proposition \ref{KMT} we can find a probability space $(\Omega, \mathcal{F},\mathbb{P})$ supporting a random increasing path $\ell^{(n,z)}$ with law $\mathbb{P}_{\operatorname{Geom}}^{0,n, 0, z}$ as well as a Brownian bridge $B^{\sigma}$ such that for $ n \geq N_2$
\begin{equation}\label{S32E4}
\mathbb{P}( \Delta(n,z) \geq n^{1/4} ) < 1/6.
\end{equation}
From the triangle inequality we have for $s \in [0,n]$
$$\left|  n^{1/2} \cdot  B_{s/n}^{\sigma} + \frac{n-s}{n} \cdot M_1 n^{1/2} + \frac{s}{n} \cdot (pn + M_2 n^{1/2}) - A_n - \ell^{(n,z)}(s) \right| \leq \Delta(n,z) + 2,$$
where the extra ``$2$'' comes from taking floors. The latter and (\ref{S32E4}) imply
\begin{equation}\label{S32E5}
\begin{split}
&\mathbb{P}\left( \ell^{(n,z)}(s) \geq \frac{n - s}{n} \cdot M_1 n^{1/2} + \frac{s}{n} \cdot (pn + M_2 n^{1/2}) - 3 n^{1/4} - A_n \right) \\
& \geq \mathbb{P}\left( n^{1/2}  B^{\sigma}_{s/n} - \Delta(n,z) - 2 \geq - 3 n^{1/4}  \right) \geq \mathbb{P}( n^{1/2}  B^{\sigma}_{s/n} \geq 0 ) - \mathbb{P}(\Delta(n,z) \geq n^{1/4} ) \geq 1/3,
\end{split} 
\end{equation}
where in the last inequality we used that $B^{\sigma}_{s/n}$ is a zero-centered normal variable. Since under $\mathbb{P}$ the law of $\ell^{(n,z)}$ is $\mathbb{P}_{\operatorname{Geom}}^{0,n, 0, B_n - A_n}$, we see that (\ref{S32E5}) implies (\ref{S32E3}) and hence the lemma.
\end{proof}

%
\subsection{Modulus of continuity estimates}\label{Section3.3} Suppose that $a,b \in \mathbb{R}$ are such that $a < b$. For a function $f \in C([a,b])$ we define its {\em modulus of continuity} for $\delta > 0$ by 
\begin{equation}\label{S33E1}
w(f,\delta) = \sup_{\substack{x,y \in [a,b] \\ |x-y| \leq \delta}} |f(x) - f(y)|.
\end{equation}
Our next result states that if $Q_1$ has law $\mathbb{P}_{ \operatorname{Geom}}^{0,n, x, y}$, and $x,y$ are not too high or low, then $Q_1$ has a well-behaved modulus of continuity. The latter is expected, since by Proposition \ref{KMT} the curve $Q_1$ behaves like a Brownian bridge, which has a well-behaved modulus of continuity.
\begin{lemma}\label{S33L} Fix $p, M, \epsilon, \eta \in (0, \infty)$, and $\sigma = \sqrt{p(1+p)}$. There exist $N_3 \in \mathbb{N}$ and $\delta$, depending on $p, M, \epsilon, \eta$, such that the following holds. If $n \geq N_3$, $x, y \in \mathbb{Z}$ are such that $x \leq y$ and $|x| \leq M n^{1/2}$, $|y - pn| \leq M n^{1/2}$, and $\mathcal{Q}_1\in C([0,1])$ is defined by $\mathcal{Q}_1(t) = \sigma^{-1}n^{-1/2}(Q_1(tn) - ptn)$, then
\begin{equation}\label{S33E2}
\mathbb{P}_{\operatorname{Geom}}^{0,n, x, y} \left( w(\mathcal{Q}_1, \delta) > \epsilon \right) < \eta.
\end{equation}
\end{lemma}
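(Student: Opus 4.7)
The plan is to reduce the statement to a modulus of continuity estimate for a Brownian bridge, using the strong coupling of Proposition \ref{KMT} to convert the discrete curve $Q_1$ into a continuous Gaussian object, and then to invoke classical continuity results for Brownian bridges.

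First I would perform a routine translation: if $z := y - x$, then the law of $t \mapsto Q_1(t) - x$ under $\mathbb{P}_{\operatorname{Geom}}^{0,n,x,y}$ equals $\mathbb{P}_{\operatorname{Geom}}^{0,n,0,z}$, and $|z - pn| \leq 2M n^{1/2}$. Since the modulus of continuity is translation-invariant in the path, it suffices to bound $w(\mathcal{Q}_1, \delta)$ for the translated walk, where
$$\mathcal{Q}_1(t) = \sigma^{-1} n^{-1/2}\bigl(\ell^{(n,z)}(tn) + x - ptn\bigr).$$

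Next I would apply Proposition \ref{KMT} with $A = 2M$ and error tolerance $\eta/2$ to obtain, for $n$ sufficiently large, a coupling of $\ell^{(n,z)}$ to a Brownian bridge $B^\sigma$ of variance $\sigma^2$ such that $\mathbb{P}(\Delta(n,z) \geq n^{1/4}) < \eta/2$, where $\Delta(n,z)$ is as in \eqref{S22E2}. On the complementary event, for every $t \in [0,1]$,
$$\bigl| \mathcal{Q}_1(t) - \sigma^{-1} B^\sigma_t - L(t) \bigr| \leq \sigma^{-1} n^{-1/2} \Delta(n,z) \leq \sigma^{-1} n^{-1/4},$$
where $L(t) = \sigma^{-1} n^{-1/2}\bigl((1-t) x + t (y - pn)\bigr)$ is the linear interpolation between two points of absolute value at most $\sigma^{-1} M$. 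Consequently, on this good event,
$$w(\mathcal{Q}_1, \delta) \leq \sigma^{-1} w(B^\sigma, \delta) + w(L, \delta) + 2\sigma^{-1} n^{-1/4}.$$

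Finally I would choose the parameters. Since $L$ is linear with slope bounded by $2\sigma^{-1} M$, one has $w(L,\delta) \leq 2\sigma^{-1} M \delta$, which can be made smaller than $\epsilon/3$ by taking $\delta$ small. The modulus of continuity of a Brownian bridge $B^\sigma$ on $[0,1]$ tends to $0$ almost surely as $\delta \downarrow 0$, so for any $\eta > 0$ one can pick $\delta > 0$ (depending on $p,\epsilon,\eta$ but not on $x,y,n$) with $\mathbb{P}\bigl(\sigma^{-1} w(B^\sigma, \delta) > \epsilon/3\bigr) < \eta/2$; this is a standard property of Brownian bridges and plays the role of Lévy's modulus estimate. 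Taking $N_3$ large enough that $2\sigma^{-1} n^{-1/4} < \epsilon/3$ for $n \geq N_3$, and combining the bad events (the coupling failure and the Brownian bridge modulus), yields $\mathbb{P}_{\operatorname{Geom}}^{0,n,x,y}\bigl(w(\mathcal{Q}_1, \delta) > \epsilon\bigr) < \eta$, as desired.

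The only step requiring any care is the uniformity of all estimates over $x,y$ in the allowed range. This is painless here because the coupling in Proposition \ref{KMT} is uniform in $z$ as long as $|z - pn| \leq A n^{1/2}$, and the linear contribution $L$ is controlled uniformly by $M$; no single step is a genuine obstacle.
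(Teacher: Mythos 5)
Your proposal is correct and follows essentially the same route as the paper's proof: translate to $\mathbb{P}_{\operatorname{Geom}}^{0,n,0,z}$, apply the strong coupling of Proposition \ref{KMT}, bound $w(\mathcal{Q}_1,\delta)$ by the coupling error plus the linear-drift contribution (at most $2\sigma^{-1}M\delta$) plus $\sigma^{-1}w(B^\sigma,\delta)$, and conclude from the a.s. vanishing of the Brownian bridge modulus. The only differences are the cosmetic $\epsilon/3$, $\eta/2$ splits versus the paper's $\epsilon/4$, $\eta/4$.
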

\begin{proof} By translation we have that 
$$\mathbb{P}_{\operatorname{Geom}}^{0,n, x, y} \left( w(\mathcal{Q}_1, \delta) > \epsilon \right) = \mathbb{P}_{\operatorname{Geom}}^{0,n, 0, y- x} \left( w(\mathcal{Q}_1, \delta) > \epsilon \right),$$
and so it suffices to show that we can find $N_3$, $\delta$ such that if $n \geq N_3$, $z \in \mathbb{Z}_{\geq 0}$ and $|z - pn| \leq 2Mn^{1/2}$
\begin{equation}\label{S33E3}
\mathbb{P}_{\operatorname{Geom}}^{0,n, 0, z} \left( w(\mathcal{Q}_1, \delta) > \epsilon \right) < \eta.
\end{equation}
Let $N_0$ be as in the statement of Proposition \ref{KMT} for $A = 2M$, $\epsilon = \eta/4$ and $p$ as in the statement of the present lemma. From Proposition \ref{KMT} we can find a probability space $(\Omega, \mathcal{F},\mathbb{P})$ supporting a random increasing path $\ell^{(n,z)}$ with law $\mathbb{P}_{\operatorname{Geom}}^{0,n, 0, z}$ as well as a Brownian bridge $B^{\sigma}$ such that for $ n \geq N_0$ and $z \in \mathbb{Z}_{\geq 0}$, $|z - pn| \leq 2Mn^{1/2}$
\begin{equation}\label{S33E4}
\mathbb{P}( \Delta(n,z) \geq n^{1/4} ) < \eta/4.
\end{equation}
By the triangle inequality we have for all $t_1, t_2 \in [0,1]$
$$|\mathcal{L}^{(n,z)}_1(t_1) - \mathcal{L}^{(n,z)}_1(t_2)| \leq \frac{2 \Delta(n,z)}{\sigma n^{1/2}} + \frac{|t_2 - t_1| \cdot |z-np|}{\sigma n^{1/2}} + \sigma^{-1} \left| B_{t_1}^{\sigma} - B_{t_2}^{\sigma} \right|,$$
where $\mathcal{L}^{(n,z)}_1(t) = \sigma^{-1} n^{-1/2} (\ell^{(n,z)}(tn) - ptn)$. The latter shows that for any $\tilde{\delta} > 0$
\begin{equation}\label{S33E5}
w(\mathcal{L}^{(n,z)}_1,\tilde{\delta}) \leq \frac{2 \Delta(n,z)}{\sigma n^{1/2}} +  2\sigma^{-1} \tilde{\delta} M + \sigma^{-1} w(B^{\sigma}, \tilde{\delta}).
\end{equation}
Since $B^{\sigma}$ is continuous we have that $w(B^{\sigma}, \tilde{\delta}) \rightarrow 0$ a.s. as $\tilde{\delta} \rightarrow 0$, and so we can find $\delta > 0$ small enough so that 
\begin{equation}\label{S33E6}
2\sigma^{-1} \delta M < \epsilon/4, \mbox{ and } \mathbb{P}(\sigma^{-1} w(B^{\sigma}, \delta) > \epsilon/4) < \eta/4.
\end{equation}
This specifies our choice of $\delta$ and we pick $N_3$ sufficiently large so that $N_3 \geq N_0$ and for $n \geq N_3$ we have $2\sigma^{-1} n^{-1/4} < \epsilon/4$. By combining (\ref{S33E4}), (\ref{S33E5}) and (\ref{S33E6}) we get 
\begin{equation*}
\begin{split}
&\mathbb{P}\left(w(\mathcal{L}^{(n,z)}_1,\delta) > \epsilon  \right) \leq \mathbb{P}( \Delta(n,z) \geq n^{1/4} ) + \mathbb{P} \left( \frac{2}{\sigma n^{1/4}} +  2\sigma^{-1} \delta M + \sigma^{-1} w(B^{\sigma}, \delta ) > \epsilon \right) \\
&\leq \eta/4 + \mathbb{P} \left( \sigma^{-1} w(B^{\sigma}, \delta ) > \epsilon /2 \right) < \eta/4 + \eta/4 = \eta/2.
\end{split}
\end{equation*}
Since under $\mathbb{P}$ the law of $\mathcal{L}^{(n,z)}_1$ is the same as $\mathcal{Q}_1$, we conclude (\ref{S33E3}) and hence the lemma.
\end{proof}

%
\subsection{Staying in the corridor}\label{Section3.4} We start with a simple statement we will require, which is an immediate consequence of \cite[Corollary 2.10]{CorHamA}.
\begin{lemma}\label{CHL} Let $a,b, x,y \in \mathbb{R}$, $a < b$, and suppose that $B$ is a Brownian bridge from $B(a) = x$ to $B(b) = y$ with variance $1$ as in (\ref{S22E3}). Let $U$ be an open set in $C([a,b])$ that contains a function $f$ such that $f(a) = x$ and $f(b) = y$. Then $\mathbb{P}(B \in U) > 0$.
\end{lemma}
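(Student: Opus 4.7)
The plan is to reduce directly to \cite[Corollary 2.10]{CorHamA}. Since $U$ is open in $C([a,b])$ with the topology of uniform convergence and contains $f$, there exists $\epsilon > 0$ such that
$$\{h \in C([a,b]) : \sup_{t \in [a,b]} |h(t) - f(t)| < \epsilon\} \subseteq U.$$
It therefore suffices to show that $\mathbb{P}\bigl(\sup_{t \in [a,b]} |B(t) - f(t)| < \epsilon\bigr) > 0$.

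I would then invoke \cite[Corollary 2.10]{CorHamA}, which asserts precisely that a Brownian bridge from $x$ to $y$ on $[a,b]$ lies within uniform distance $\epsilon$ of any continuous interpolant $f$ satisfying $f(a) = x$, $f(b) = y$, with positive probability. Applied to the given $f$ and $\epsilon$, this at once gives the desired positivity and hence the lemma.

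If one preferred a self-contained derivation rather than appealing to the external reference, the natural route is to use the representation (\ref{S22E3}) to rescale the problem to the unit interval: writing $B(t) = (b-a)^{1/2}\tilde{B}((t-a)/(b-a)) + \frac{b-t}{b-a} x + \frac{t-a}{b-a} y$ and setting $\tilde{f}(s) := (b-a)^{-1/2}\bigl(f(a + s(b-a)) - (1-s)x - sy\bigr)$, the event $\sup_t |B(t) - f(t)| < \epsilon$ becomes $\sup_s |\tilde{B}(s) - \tilde{f}(s)| < \epsilon (b-a)^{-1/2}$. It then remains to establish that $\mathbb{P}(\|\tilde{B} - \tilde{f}\|_\infty < \delta) > 0$ for every continuous $\tilde{f}$ with $\tilde{f}(0) = \tilde{f}(1) = 0$ and every $\delta > 0$, which is the classical support theorem for the standard Brownian bridge. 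The latter follows either from Girsanov's theorem applied to a smooth approximant of $\tilde{f}$ in the Cameron-Martin space, or from the decomposition $\tilde{B}(s) = W(s) - sW(1)$ together with the support theorem for Brownian motion. I do not expect any genuine obstacle beyond bookkeeping in the change of variables — the statement is, as the paper notes, essentially an immediate consequence of its cited source.
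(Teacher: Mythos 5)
Your argument is correct and is essentially the paper's proof: the paper also reduces to the standard Brownian bridge on $[0,1]$ via the affine homeomorphism $F(g)(t) = (b-a)^{1/2} g\bigl(\tfrac{t-a}{b-a}\bigr) + \tfrac{b-t}{b-a}x + \tfrac{t-a}{b-a}y$ and then applies the cited support result from \cite{CorHamA}. The one small caveat is that the cited corollary is stated for the \emph{standard} bridge on $[0,1]$ vanishing at both endpoints, so the rescaling you present as an optional ``self-contained'' alternative is in fact the needed step rather than a luxury; with it included, your proof goes through exactly as the paper's does.
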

\begin{proof} Consider the map $F: C([0,1]) \rightarrow C([a,b])$, given by 
$$F(g)(t) = (b-a)^{1/2} \cdot g \left(\frac{t-a}{b-a} \right) + \left( \frac{b-t}{b-a} \right) \cdot x + \left( \frac{t-a}{b-a} \right) \cdot y.$$
We readily observe that $F$ is a homeomorphism, $F^{-1}(B)$ is a standard Brownian motion, and $0 = F^{-1}(f)(0) = F^{-1}(f)(1)$. From \cite[Lemma 2.10]{CorHamA} we get $\mathbb{P}(B \in U) = \mathbb{P}(F^{-1}(B) \in F^{-1}(U)) > 0$.
\end{proof}

Given $A, B \in \mathbb{R}$, $x,y \in \mathbb{R}$ and $\delta \in (0,1/2)$ we define the function $f(t|A,B,x,y,\delta)$ in $C([0,1])$ to be the function such that 
\begin{equation}\label{S34E1}
\begin{split}
&f(0|A,B,x,y, \delta) = x + A, \hspace{2mm} f(1|A,B,x,y, \delta) = y+ A, \hspace{2mm} \\
&f(\delta |A,B,x,y, \delta) = f(1-\delta|A,B,x,y, \delta) = B + A,
\end{split}
\end{equation}
and $f(t|A,B,x,y, \delta)$ is linear on the intervals $[0,\delta]$, $[\delta, 1-\delta]$ and $[1-\delta, 1]$, see Figure \ref{S3_1}.

\begin{figure}[ht]
    \centering
     \begin{tikzpicture}[scale=2.7]

        \def\tra{3} 
        \draw[->, thick, gray] (-1.4,0)--(1.4,0);
        \draw[->, thick, gray] (-1,-1)--(-1,1.2);
        
        \draw[black, fill = black] (-1,0) circle (0.02);
        \draw[black, fill = black] (1,0) circle (0.02);        
        \draw (-1.1,-0.1) node{$0$};
        \draw (1, -0.1) node{$1$};

        \draw[black, fill = black] (1,-0.8) circle (0.02);
        \draw (1.2,-0.8) node{$(1,y)$};

        \draw[black, fill = black] (1,-0.6) circle (0.02);
        \draw (1.34,-0.6) node{$(1,y + A)$};

        \draw[black, fill = black] (-1,0.3) circle (0.02);
        \draw (-1.2,0.3) node{$(0,x)$};

        \draw[black, fill = black] (-1,0.5) circle (0.02);
        \draw (-1.34,0.5) node{$(0,x + A)$};

        \draw[-] (-0.5,0.8)--(0.5, 0.8);
        \draw[-] (-1,0.5)--(-0.5, 0.8);
        \draw[-] (0.5,0.8)--(1, -0.6);

        \draw[black, fill = black] (-0.5,0.8) circle (0.02);
        \draw (-0.5,0.9) node{$(\delta, B+A)$};

        \draw[black, fill = black] (0.5,0.8) circle (0.02);
        \draw (0.5,0.9) node{$(1-\delta, B+A)$};


        \draw[black, fill = black] (-0.5,0.4) circle (0.02);

        \draw[black, fill = black] (0.5,0.4) circle (0.02);

        \draw[->] (0,0.8)--(0, 0.4);
        \draw[->] (0,0.4)--(0, 0.8);
        \draw (0.1,0.6) node{$2A$};

        \draw[black, fill = black] (1,-1) circle (0.02);
        \draw (1.34,-1) node{$(1,y - A)$};

        \draw[black, fill = black] (-1,0.1) circle (0.02);
        \draw (-1.34,0.1) node{$(0,x - A)$};

        \draw[-] (-0.5,0.4)--(0.5, 0.4);
        \draw[-] (-1,0.1)--(-0.5, 0.4);
        \draw[-] (0.5,0.4)--(1, -1);

        \draw[black, fill = black] (-0.5,0) circle (0.02);
        \draw[black, fill = black] (0.5,0) circle (0.02);
        \draw (-0.5,-0.1) node{$\delta$};
        \draw (0.5, -0.1) node{$1 -\delta$};

    \end{tikzpicture} 

    \caption{The figure depicts the functions $f(\cdot |A,B,x,y, \delta)$ and $f(\cdot |-A,B,x,y, \delta)$ from (\ref{S34E1}) for $A > 0$.}
    \label{S3_1}
\end{figure}

The main result in this section, Lemma \ref{S34L} below, shows that if $Q_1$ has law $\mathbb{P}_{ \operatorname{Geom}}^{0,n, x, y}$, then with some non-vanishing probability its scaled version $\mathcal{Q}_1(t) = \sigma^{-1}n^{-1/2}(Q_1(tn) - ptn)$ stays in the corridor determined by two functions of the kind in (\ref{S34E1}). 

\begin{lemma}\label{S34L} Fix $p, M_1, M_2, A \in (0,\infty)$, $\delta_1 \in (0,1/2)$, and $\sigma = \sqrt{p(1+p)}$. There exist $N_4 \in \mathbb{N}$ and $\epsilon > 0$, depending on $p,M_1,M_2, A, \delta_1$ such that the following holds for all $n \geq N_4$. Suppose that $x,y \in \mathbb{Z}$ are such that $x\leq y$, $|x| \leq M_1 n^{1/2}$, $|y - np| \leq M_1 n^{1/2}$ and $B \in [-M_2, M_2]$. Let $f^t, f^b \in C([0,1])$ be defined through
$$f^t(t) = f(t|A, B, \sigma^{-1} n^{-1/2} x, \sigma^{-1} n^{-1/2}(y- pn), \delta_1) $$
$$f^b(t) = f(t|-A, B, \sigma^{-1} n^{-1/2} x, \sigma^{-1} n^{-1/2}(y- pn), \delta_1), $$
where we recall that $f(t|A,B,x,y,\delta)$ is as in (\ref{S34E1}). If $\mathcal{Q}_1\in C([0,1])$ is defined by $\mathcal{Q}_1(t) = \sigma^{-1}n^{-1/2}(Q_1(tn) - ptn)$, we have
\begin{equation}\label{S34E2}
\mathbb{P}_{\operatorname{Geom}}^{0,n, x, y} \left( f^t(t) \geq \mathcal{Q}_1(t) \geq f^b(t) \mbox{ for all } t\in [0,1] \right) > \epsilon.
\end{equation}
\end{lemma}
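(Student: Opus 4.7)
The approach is to apply the strong coupling of Proposition \ref{KMT} to compare $\mathcal{Q}_1$ with a variance-$1$ Brownian bridge $\tilde B$ from $x' := \sigma^{-1}n^{-1/2}x$ to $y' := \sigma^{-1}n^{-1/2}(y-pn)$, thereby reducing (\ref{S34E2}) to a positivity statement about $\tilde B$ staying inside a slightly shrunk corridor, and then to invoke Lemma \ref{CHL} together with a compactness argument to produce a lower bound uniform in the parameters.

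Translating by $x$, the law of $Q_1 - x$ equals $\mathbb{P}_{\operatorname{Geom}}^{0,n,0,z}$ where $z = y - x$ satisfies $|z - pn| \leq 2M_1 n^{1/2}$. Applying Proposition \ref{KMT} with constant $2M_1$ provides, on a common probability space, the path $\ell^{(n,z)}$ and a variance-$\sigma^2$ Brownian bridge $B^\sigma$ with $\mathbb{P}(\Delta(n,z) \geq n^{1/4}) < \epsilon_2$ for large $n$, where $\epsilon_2 > 0$ will be selected below. Set $\tilde B(t) := (1-t)x' + ty' + \sigma^{-1}B^\sigma_t$, which is a variance-$1$ Brownian bridge from $x'$ to $y'$. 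Direct substitution in (\ref{S22E2}) yields $\|\mathcal{Q}_1 - \tilde B\|_\infty \leq \sigma^{-1} n^{-1/4}$ on the event $\{\Delta(n,z) \leq n^{1/4}\}$. Pick $\epsilon_1 := A/2$ and take $N_4$ large enough that $\sigma^{-1} n^{-1/4} < \epsilon_1$ for $n \geq N_4$. Then, whenever $\tilde B$ lies in the $\epsilon_1$-shrunk corridor
\[
U_\theta := \{g \in C([0,1]) : f^b(t) + \epsilon_1 < g(t) < f^t(t) - \epsilon_1 \text{ for all } t \in [0,1]\},
\]
the coupled path $\mathcal{Q}_1$ lies in the original corridor; by a union bound, (\ref{S34E2}) follows once we establish $\mathbb{P}(\tilde B \in U_\theta) \geq 2\epsilon_2$ uniformly in the parameters $\theta := (x', y', B)$.

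The midline $f^m := (f^t + f^b)/2$ is piecewise linear, going from $x'$ at $t=0$ up to height $B$ on $[\delta_1, 1-\delta_1]$ and down to $y'$ at $t = 1$; since the corridor has half-width $A > \epsilon_1$, $f^m$ lies in $U_\theta$. Because $f^m$ matches $\tilde B$ at both endpoints, Lemma \ref{CHL} yields $\mathbb{P}(\tilde B \in U_\theta) > 0$ for each fixed $\theta$. The main task is to upgrade this pointwise positivity to a uniform positive lower bound as $\theta$ varies in the compact set $K := [-M_1/\sigma, M_1/\sigma]^2 \times [-M_2, M_2]$. Writing $\tilde B(t) = W_t + (1-t)x' + ty'$ for a standard Brownian bridge $W$ independent of $\theta$, the event $\{\tilde B \in U_\theta\}$ becomes $\{u_\theta(t) < W_t < v_\theta(t) \text{ for all } t\}$ with $u_\theta, v_\theta \in C([0,1])$ depending continuously on $\theta$ in sup-norm. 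This event is lower semicontinuous in $\theta$: for $\theta_n \to \theta$ and any $\alpha > 0$, eventually $\{u_\theta + \alpha < W < v_\theta - \alpha\} \subseteq \{u_{\theta_n} < W < v_{\theta_n}\}$, so $\liminf_n \mathbb{P}(\tilde B \in U_{\theta_n}) \geq \mathbb{P}(u_\theta + \alpha < W < v_\theta - \alpha)$, and sending $\alpha \downarrow 0$ together with monotone convergence of measures completes the semicontinuity claim. Since $K$ is compact and the function is pointwise positive, its infimum $2\epsilon_2$ is strictly positive, delivering the uniform lower bound. The main obstacle is exactly this uniformity step, as Lemma \ref{CHL} by itself supplies only pointwise positivity and the desired constant $\epsilon$ must be independent of $x$, $y$, and $B$.
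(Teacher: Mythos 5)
Your proof is correct. It reaches the same destination as the paper but by a noticeably different route. The paper argues by contradiction: it extracts a subsequence of parameters $(n_k, x_k, y_k, B_k)$ along which the probability would vanish, passes to convergent rescaled boundary data, invokes the weak-convergence statement of Lemma \ref{lem:RW} to get convergence of $\mathcal{Q}_1$ to a Brownian bridge, and then combines Lemma \ref{CHL} with the Portmanteau theorem to contradict the vanishing. You instead work directly: Proposition \ref{KMT} couples $\mathcal{Q}_1$ to a variance-one Brownian bridge $\tilde{B}$ within $\sigma^{-1}n^{-1/4}$ in sup-norm, which reduces the claim to a uniform lower bound on $\mathbb{P}(\tilde{B}\in U_\theta)$ over the compact parameter set $K$; pointwise positivity comes from Lemma \ref{CHL} (applied to the corridor's midline, exactly as in the paper), and uniformity from your explicit lower-semicontinuity argument in $\theta$. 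The two proofs share the same essential ingredients (Brownian-bridge comparison, Lemma \ref{CHL}, compactness of the parameter domain), but yours is more self-contained and quantitative — it bypasses Lemma \ref{lem:RW} and the Portmanteau step entirely and makes the source of the uniform constant $\epsilon$ explicit — at the cost of having to verify the semicontinuity of $\theta\mapsto\mathbb{P}(u_\theta<W<v_\theta)$ by hand, whereas the paper's subsequence extraction absorbs that verification into the contradiction scheme. One small point worth stating explicitly in a final write-up: the constant $\epsilon_2$ is determined purely by the corridor geometry (i.e., by $A, M_1, M_2, \delta_1, p$) before Proposition \ref{KMT} is invoked with $\epsilon=\epsilon_2$, so there is no circularity in the order of choices; your text implies this but it deserves a sentence.
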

\begin{proof} Suppose for the sake of contradiction that no such $N_4$ and $\epsilon$ exist. Then, we can find sequences $n_k$ increasing to infinity, $B_k \in [-M_2, M_2]$, $x_k, y_k \in \mathbb{Z}$ with $x_k \leq y_k$, $|x_k| \leq M_1 n_k^{1/2}$, $|y_k - p n_k| \leq M_1 n_k^{1/2}$ such that 
\begin{equation}\label{S34E3}
\lim_{k \rightarrow \infty} \mathbb{P}_{\operatorname{Geom}}^{0,n_k, x_k, y_k} \left( f^t_k(t) \geq \mathcal{Q}_1(t) \geq f^b_k(t) \mbox{ for all } t\in [0,1] \right) = 0,
\end{equation}
where $f^t_k$ and $f^b_k$ are as in the statement of the lemma but with $B, x,y, n$ replaced with $B_k, x_k, y_k, n_k$, respectively. By possibly passing to a subsequence, which we continue to call $n_k$, we may assume 
\begin{equation}\label{S34E4}
\lim_k B_k = B, \hspace{2mm} \lim_k \sigma^{-1} n_k^{-1/2} x_k = x, \hspace{2mm} \lim_k \sigma^{-1} n_k^{-1/2} (y_k - p n_k) = y.
\end{equation}
Using Lemma \ref{lem:RW} applied to $d_n = n$, $a = 0$, $b = 1$, $A_n = 0$, $B_n = n$, $f = f_n = \infty$, $g = g_n = -\infty$ we know that if $Q^k_1$ has law $\mathbb{P}_{\operatorname{Geom}}^{0,n_k, x_k, y_k}$, then $\mathcal{Q}^k_1 = \sigma^{-1}n^{-1/2}(Q^k_1(tn) - ptn)$ converges weakly to a Brownian bridge $B$ from $B(0) = x$ to $B(1) = y$ with variance $1$. Let $f(t|0, B,x,y,\delta_1)$ be as in (\ref{S34E1}) and let $U$ be the open set 
$$U = \{g \in C([0,1]): \sup_{t \in [0,1]} |g(t) - f(t|0, B,x,y,\delta_1)| < A/2 \}.$$
We note that $U$ contains the function $f(s|0, B,x,y,\delta_1)$ and so by Lemma \ref{CHL} $\mathbb{P}(B \in U) > 0$. On the other hand, from (\ref{S34E4}) we have for all large $k$
$$U \subset \{g \in C([0,1]): f_k^t(t) \geq g(t) \geq f_k^b(t) \}.$$
The last few statements imply
\begin{equation*}
\begin{split}
&\liminf_{k \rightarrow \infty} \mathbb{P}_{\operatorname{Geom}}^{0,n_k, x_k, y_k} \left( f^t_k(t) \geq \mathcal{Q}_1(t) \geq f^b_k(t) \mbox{ for all } t\in [0,1] \right) \\
&\geq \liminf_{k \rightarrow \infty} \mathbb{P}_{\operatorname{Geom}}^{0,n_k, x_k, y_k}(\mathcal{Q}_1 \in U) \geq \mathbb{P}(B \in U) > 0,
\end{split}
\end{equation*}
where the next to last inequality used the Portmanteau theorem, see \cite[Theorem 2.1]{Bill}. The last inequality leads to our desired contradiction with (\ref{S34E3}), concluding the proof.
\end{proof}

%
\section{Estimates for multi-curve ensembles}\label{Section4} In this section we derive two statements about the measures $\mathbb{P}_{\ice, \operatorname{Geom}}^{T_0,T_1, \vec{x}, \vec{y}, f, g}$ from Definition \ref{DefAvoidingLawBer}. As in Section \ref{Section3} we assume $T_0 = 0$, $T_1 = n$, and focus on the case $f = \infty$. We continue with the same notation as in Section \ref{Section2} and denote elements of $\Omega(0,n, \vec{x},\vec{y})$ by $\mathfrak{Q} = \{Q_i\}_{i = 1}^k$.

%
\subsection{Bottom boundary separation}\label{Section4.1} The goal of the section  is to prove the following lemma.

\begin{lemma}\label{S41L} Fix $p, \eta, M^{\mathsf{bot}}, M^{\mathsf{side}} \in (0, \infty)$, $k \in \mathbb{N}$ and $\delta_2 \in (0,1/2)$. There exist $N_5 \in \mathbb{N}$, $\delta^{\mathsf{sep}} > 0$ and $\Delta^{\mathsf{sep}} \in (0,\delta_2)$, depending on $p, \eta, M^{\mathsf{bot}}, M^{\mathsf{side}}, k , \delta_2$, such that the following holds for all $n \geq N_5$. If we assume:
\begin{itemize}
\item $\vec{x}, \vec{y}\in \mathfrak{W}_k$ as in (\ref{DefSig}), and $|x_i| \leq M^{\mathsf{side}} \cdot n^{1/2}$, $|y_i - pn| \leq M^{\mathsf{side}} \cdot n^{1/2}$ for $i  \in \llbracket 1, k \rrbracket$;
\item $g: \llbracket 0, n \rrbracket \rightarrow [-\infty, \infty)$ is such that $g(s) \leq ps + M^{\mathsf{bot}} \cdot n^{1/2} $ for all $s \in \llbracket 0, n \rrbracket$;
\item the set $\Omega_{\ice}(0,n,\vec{x},\vec{y}, \infty, g)$ is non-empty;
\item $t_0 \in [\delta_2, 1- \delta_2]$, and $F_n(t_0 n ) = \mathbb{Z} \cap [t_0n -\Delta^{\mathsf{sep}} \cdot n, t_0 n +\Delta^{\mathsf{sep}} \cdot n]$,
\end{itemize}
then the following inequality holds
\begin{equation}\label{S41E1}
\mathbb{P}_{\ice, \operatorname{Geom}}^{0, n, \vec{x}, \vec{y}, \infty, g}\left( Q_k(t_0n) - pt_0n \geq g(s) - ps + \delta^{\mathsf{sep}} \cdot n^{1/2}  \mbox{ for all } s \in F_n (t_0 n)\right) > 1- \eta.
\end{equation}
\end{lemma}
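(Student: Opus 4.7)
My plan is to prove Lemma \ref{S41L} by combining the interlacing Gibbs property with monotone coupling (Lemma \ref{MCL}) and the convergence of interlacing geometric ensembles to avoiding Brownian ensembles (Lemma \ref{lem:RW}). The first reduction is to rewrite \eqref{S41E1} in the equivalent form $\tilde Q_k(t_0 n) \geq M_0 + \delta^{\mathsf{sep}} n^{1/2}$, where $\tilde Q_k(s) := Q_k(s) - ps$, $\tilde g(s) := g(s) - ps$, and $M_0 := \sup_{s \in F_n(t_0 n)} \tilde g(s) \in [-\infty, M^{\mathsf{bot}} n^{1/2}]$. The window $F_n$ then only enters the problem through the single number $M_0$, so the task becomes a one-point lower bound on $\tilde Q_k(t_0 n)$ that is uniform over $g$'s compatible with a given $M_0$.

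I would then split the argument into two regimes. In the \emph{easy regime} $M_0 \leq -R n^{1/2}$, with $R = R(\eta, p, M^{\mathsf{side}}, k, \delta_2)$ large and to be chosen, it suffices to prove $\tilde Q_k(t_0 n) \geq -R n^{1/2}+\delta^{\mathsf{sep}} n^{1/2}$ with probability $> 1-\eta$. For this, Lemma \ref{MCL} lets me dominate $Q_k$ from below by the bottom curve $Q_k^{\mathrm{ref}}$ of the interlacing ensemble with $g$ replaced by $-\infty$ and $\vec x,\vec y$ replaced by deterministic worst-case vectors in $\mathfrak{W}_k$ sitting slightly below $-M^{\mathsf{side}} n^{1/2}$ at $0$ and $pn - M^{\mathsf{side}} n^{1/2}$ at $n$ (small perturbations ensure $\mathfrak{W}_k$-membership). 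Lemma \ref{lem:RW} applied with $d_n = n$, $a=0$, $b=1$ then shows that the appropriately rescaled $Q_k^{\mathrm{ref}}$ converges weakly to the bottom curve of an avoiding Brownian line ensemble, whose one-point marginal at $t_0 \in [\delta_2, 1-\delta_2]$ is tight, so its value exceeds $-R + \sigma^{-1} \delta^{\mathsf{sep}}$ with probability $>1-\eta$ once $R$ is chosen large.

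For the \emph{main regime} $M_0 \in (-R n^{1/2}, M^{\mathsf{bot}} n^{1/2}]$, the argument is by entropic repulsion. Pick $\alpha \in (0,\delta_2/2)$, let $s_\pm := t_0 n \pm \lfloor \alpha n\rfloor$, and apply the interlacing Gibbs property on $\llbracket s_-, s_+\rrbracket$ conditional on $\mathfrak{Q}(s_\pm)$. On the high-probability event that each component of $\mathfrak{Q}(s_\pm)$ is $O(n^{1/2})$-close to $p s_\pm$ (established by applying the easy-regime analysis to the full ensemble), a second use of Lemma \ref{MCL} lets me replace the random boundary vectors $\mathfrak{Q}(s_\pm)$ by deterministic worst-case vectors, still consistent with $g$, which only lowers $Q_k$ and therefore yields a lower bound on the original $Q_k$. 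On the resulting deterministic-boundary ensemble, Lemma \ref{lem:RW} gives weak convergence of the rescaled bridge to an avoiding Brownian ensemble on $[-\alpha, \alpha]$ with lower boundary the rescaled $\tilde g|_{\llbracket s_-, s_+\rrbracket}$. The sought inequality \eqref{S41E1} is then reduced to: the bottom curve of this limiting Brownian ensemble, conditioned to stay above a barrier of supremum $\bar M_0 = \sigma^{-1} M_0 /\sqrt n$, exceeds $\bar M_0 + \sigma^{-1} \delta^{\mathsf{sep}}$ at the interior point $0$, which is pure entropic repulsion for a Brownian bridge above a barrier.

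The main obstacle is the last step: the single-curve estimates of Section \ref{Section3} (e.g.\ Lemma \ref{S32L} giving probability $\geq 1/3$, or Lemma \ref{S34L} giving a positive $\epsilon$) only furnish positive-probability separation, while the target probability is $> 1-\eta$. I expect to amplify to the desired high-probability bound by iterating the interlacing Gibbs property on a chain of many well-separated sub-intervals of $\llbracket s_-, s_+\rrbracket$: on each sub-interval a positive-probability lower bound on $\tilde Q_k$ above $\tilde g$ is available from the single-curve estimates, and approximate conditional independence furnished by the bridge Markov structure allows one to union-bound the failure events. As an alternative to this amplification, one can produce the quantitative entropic repulsion statement directly for Brownian bridges by a first-last passage decomposition relative to the barrier, combined with the explicit Gaussian bounds underlying Proposition \ref{KMT}.
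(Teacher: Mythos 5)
Your reformulation via $M_0=\sup_{s\in F_n(t_0n)}(g(s)-ps)$ and your use of Lemma \ref{MCL} to pass to lowered, well-separated deterministic boundary data are both on the right track (the paper does something very close: it picks the argmax $s_n$ of $g(s)-ps$ over the window and replaces $g$ by the single-point barrier $\tilde g$ supported at $s_n$). But there is a genuine gap at exactly the point you flag as the "main obstacle," and neither of your proposed remedies closes it. The amplification scheme (positive-probability separation on many sub-intervals, plus a union bound) can at best show that $Q_k$ is well above the barrier \emph{somewhere}; it does not produce a $1-\eta$ bound at the \emph{specific} time $t_0n$, and the "approximate conditional independence" you invoke would itself have to be extracted from the Gibbs property by an argument you have not supplied. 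The alternative route through Lemma \ref{lem:RW} is also not licensed here: that lemma requires the rescaled lower boundaries $g_n$ to converge uniformly to a fixed continuous (or identically $-\infty$) function, whereas the $g$ in Lemma \ref{S41L} is an arbitrary function bounded above by $ps+M^{\mathsf{bot}}n^{1/2}$, and the estimate must be uniform over all such $g$ simultaneously; a compactness argument in $g$ is not available.

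The missing idea is the following combination, which is the crux of the paper's proof. By interlacing, $Q_k(s_n-1)\geq g(s_n)$ holds \emph{deterministically} under $\mathbb{P}_{\ice,\operatorname{Geom}}^{0,n,\vec x\,',\vec y\,',\infty,\tilde g}$, so one only needs to rule out $Q_k(s_n-1)-p(s_n-1)$ lying in the small window $[\tilde g(s_n)-ps_n-2\delta^{\mathsf{sep}}n^{1/2},\,\tilde g(s_n)-ps_n+2\delta^{\mathsf{sep}}n^{1/2}]$. This is done by anti-concentration for the \emph{free} bridge (Lemma \ref{S31L}), transferred to the interlacing measure via the bound $\mathbb{P}_{\operatorname{Geom}}^{0,n,\vec x\,',\vec y\,'}(\Omega_{\ice}(0,n,\vec x\,',\vec y\,',\infty,\tilde g))\geq\epsilon_0^k$, which is itself obtained by exhibiting explicit corridor events $A_1^n\times\cdots\times A_k^n\subseteq\Omega_{\ice}(0,n,\vec x\,',\vec y\,',\infty,\tilde g)$ from Lemma \ref{S34L}. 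The conclusion at $t_0n$ then follows from the modulus-of-continuity estimate (Lemma \ref{S33L}), again transferred through the same $\epsilon_0^{-k}$ comparison. In other words, the $1-\eta$ bound does not come from entropic repulsion or from amplifying a positive-probability event; it comes from a deterministic floor at the argmax point plus diffuseness of the one-point marginal. Without this (or an equivalent quantitative substitute), your proof does not go through.
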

\begin{remark} In plain words, the lemma states that if the side boundaries $\vec{x}, \vec{y}$ are well-behaved (i.e. their coordinates are not too high or low), and the bottom boundary $g$ is not too high, then the $k$-th curve in the ensemble with law $\mathbb{P}_{\ice, \operatorname{Geom}}^{0, n, \vec{x}, \vec{y}, \infty, g}$ at a point $t_0n$ is well-separated from the bottom boundary $g$. Due to interlacing one should expect that $Q_k(t_0n)$ is well-separated from $g(t_0n + 1)$, but the separation in (\ref{S41E1}) is stronger, since it states that $Q_k(t_0n)$ is well-separated from the maximum of the properly re-centered $g(s)$ over a macroscopic interval around $t_0n$.
\end{remark}
\begin{proof} Let us discuss the main ideas of the proof. Suppose $s_n \in F_n(t_0n)$ is such that $g(s_n) -p s_n = \max_{s \in F_n(t_0n)} [g(s) - ps]$. Firstly, one can replace $\vec{x}, \vec{y}$ and $g$ in the statement of the lemma with lower side and bottom boundaries $\vec{x}\,',\vec{y}\,'$ and $\tilde{g}$ such that $\tilde{g}(s_n) = g(s_n)$. In view of the monotone coupling in Lemma \ref{MCL}, showing (\ref{S41E1}) for these lower boundaries would imply the statement for the original ones. We pick $\vec{x}\,'$ and $\vec{y}\,'$ to be lower than $\vec{x}$ and $\vec{y}$, respectively, and with well-separated coordinates, while $\tilde{g}$ is such that $\tilde{g}(s_n) = g(s_n)$ and $\tilde{g}(s) = -\infty$ when $s \neq s_n$. 

The idea of having $\vec{x}\,'$ and $\vec{y}\,'$ with well-separated coordinates is that under this condition one can show that any event that is unlikely under $\mathbb{P}_{\operatorname{Geom}}^{0, n, \vec{x}\,', \vec{y}\,'}$ is also unlikely under $\mathbb{P}_{\ice, \operatorname{Geom}}^{0, n, \vec{x}\,', \vec{y}\,', \infty, \tilde{g}}$, see (\ref{S41E10}). In other words, we can compare events for interlacing bridges with non-interlacing ones. The crucial statement that allows the comparison is that $\mathbb{P}_{\operatorname{Geom}}^{0, n, \vec{x}\,', \vec{y}\,'}\left(\Omega_{\ice}(0,n,\vec{x}\,',\vec{y}\,', \infty, \tilde{g}) \right)$ is bounded away from zero. The latter is established by constructing a likely event $A_1^n \times \cdots \times A_k^n \subseteq \Omega_{\ice}(0,n,\vec{x}\,',\vec{y}\,', \infty, \tilde{g})$, where $A_i^n \subseteq \Omega_{\ice}(0,n, x_i', y_i')$ are events that paths stay inside of corridors of the type in Lemma \ref{S34L}.

Once we can compare the interlacing bridges with free ones, we can use the results of Section \ref{Section3} to show that: (1) $Q_k(s_n-1) - p(s_n-1)$ is well-separated from $\tilde{g}(s_n) - ps_n$ with high probability -- this uses the no bulk concentration statement in Lemma \ref{S31L}, and also (2) $Q_k(t_0n) - pt_0n$ is close to $Q_k(s_n-1) - p(s_n-1)$ and hence also well-separated from $\tilde{g}(s_n) - ps_n$ with high probability -- this uses the modulus of continuity estimates from Lemma \ref{S33L}. We now turn to the details of the proof. \\

Throughout the proof we let $\sigma = \sqrt{p(1+p)}$ and set $\mathcal{Q}_i(t)= \sigma^{-1}n^{-1/2}(Q_i(tn) - ptn)$ for $Q_i \in \Omega(0,n, x,y)$ and $t \in [0,1]$. For clarity we split the proof into three steps.\\

{\bf \raggedleft Step 1.} Fix $\tilde{A} = 1$ and let $\vec{x}\,' = (x_1', \dots, x_k')$, $\vec{y}\,' = (y_1', \dots, y_k')$, where for $i \in \llbracket 1, k \rrbracket$ 
$$x_i' = \lfloor -M^{\mathsf{side}} \cdot n^{1/2}  - 4(i-1)\tilde{A} \cdot n^{1/2} \rfloor \mbox{ and } y_i' =  \lfloor pn -M^{\mathsf{side}} \cdot n^{1/2}  - 4(i-1)\tilde{A} \cdot n^{1/2} \rfloor.$$
We define the functions
$$f^t_i(s) = f(s|\sigma^{-1}\tilde{A}, \sigma^{-1}M^{\mathsf{bot}} + 4\sigma^{-1} (k-i + 1)\tilde{A}, \sigma^{-1} n^{-1/2} x_i', \sigma^{-1} n^{-1/2}(y_i'- pn), \delta_2/2) $$
$$f^b_i(s) = f(s|-\sigma^{-1}\tilde{A}, \sigma^{-1} M^{\mathsf{bot}} + 4 \sigma^{-1}(k-i + 1)\tilde{A}, \sigma^{-1} n^{-1/2} x_i', \sigma^{-1} n^{-1/2}(y_i'- pn), \delta_2/2), $$
where we recall that $f(s|A,B,x,y,\delta)$ is as in (\ref{S34E1}). In words, the functions $(f^t_i, f^b_i)$ for $i \in \llbracket 1, k \rrbracket$ form $k$ disjoint corridors like in Figure \ref{S3_1} of width $2\sigma^{-1} \tilde{A}$ that are approximately vertical distance $2\sigma^{-1} \tilde{A}$ away from each other. The distance is not exactly $2\sigma^{-1} \tilde{A}$, since $x_i' - x_{i+1}' = -4\tilde{A} \cdot n^{1/2} + O(1)$ and $y_i' - y_{i+1}' = -4\tilde{A} \cdot n^{1/2} + O(1)$ due to taking floors.

From Lemma \ref{S34L} with $p$ as in the present lemma, $\delta_1 =\delta_2/2$, $A = \sigma^{-1}\tilde{A}$, $M_1 = M^{\mathsf{side}} + 4k\tilde{A} + 1$, $M_2 = \sigma^{-1}M^{\mathsf{bot}}+ 4\sigma^{-1} k \tilde{A} $, we can find $N_4 \in \mathbb{N}$ and $\epsilon_0 > 0$ such that for $n \geq N_4$ and $i \in \llbracket 1, k \rrbracket$
\begin{equation}\label{S41E2}
\mathbb{P}_{\operatorname{Geom}}^{0,n, x_i', y_i'} \left( f^t_i(s) \geq \mathcal{Q}_1(s) \geq f^b_i(s) \mbox{ for all } s\in [0,1] \right) > \epsilon_0.
\end{equation}
If $A^n_i$ is the subset of increasing paths $Q_1 \in \Omega(0,n, x_i', y_i')$ that satisfy the conditions in (\ref{S41E2}), then (\ref{S41E2}) is equivalent to 
\begin{equation}\label{S41E3}
\frac{|A_i^n|}{| \Omega(0,n, x_i', y_i')|} > \epsilon_0.
\end{equation}

From Lemma \ref{S31L} with $p$ as in the present lemma, $\delta_1 =\delta_2/2$, $M = M^{\mathsf{side}} + 4k\tilde{A} + 1$, and $\eta = \epsilon_0^k \cdot (\eta/4)$, we can find $N_1 \in \mathbb{N}$ and $\delta^{\mathsf{sep}} > 0$ such that for $n \geq N_1$, $a \in \mathbb{R}$ and $t \in [\delta_1, 1-\delta_1]$
\begin{equation}\label{S41E4}
\mathbb{P}_{ \operatorname{Geom}}^{0,n, x_k', y_k'} \left( n^{-1/2} (Q_1(tn) - ptn) \in [a -2 \delta^{\mathsf{sep}}, a + 2\delta^{\mathsf{sep}} ]  \right) < \epsilon_0^k \cdot (\eta/4).
\end{equation}
This specifies our choice of $\delta^{\mathsf{sep}}$ in the lemma.

From Lemma \ref{S33L} with $p$ as in the present lemma, $M = M^{\mathsf{side}} + 4k\tilde{A} + 1$, $\epsilon = \sigma^{-1}\delta^{\mathsf{sep}}$ as above, and $\eta =\epsilon_0^k \cdot (\eta/4)$, we can find $N_3 \in \mathbb{N}$ and $\delta > 0$ such that for $n \geq N_3$
\begin{equation}\label{S41E5}
\mathbb{P}_{\operatorname{Geom}}^{0,n, x_k', y_k'} \left( w(\mathcal{Q}_1, \delta) > \sigma^{-1}\delta^{\mathsf{sep}}  \right) < \epsilon_0^k \cdot (\eta/4).
\end{equation}
We now set $\Delta^{\mathsf{sep}} = \min(\delta/2, \delta_2/4)$ and this specifies our choice of $\Delta^{\mathsf{sep}}$ in the lemma.

We finally let $N_5 \in \mathbb{N}$ be sufficiently large so that for $n \geq N_5$ we have $n \geq \max(N_1, N_3, N_4)$ as above, $\Delta^{\mathsf{sep}} > n^{-1}$, $\tilde{A} n^{1/2} \geq p$, and also for $s \in \llbracket 0, n-1\rrbracket$ and $i \in \llbracket 1, k-1\rrbracket$
\begin{equation}\label{S41E6}
f_i^{b}\left( \frac{s}{n} \right) \geq f_{i+1}^{t}\left( \frac{s+1}{n}\right) + \frac{p}{\sigma n^{1/2}}.
\end{equation}
We mention that our choice of $N_5$ is possible since the functions $f_{i+1}^{t}$ and $f_i^b$ are Lipschitz continuous with constant $(\delta_2/2)^{-1} \cdot \sigma^{-1} \left(M^{\mathsf{side}} + M^{\mathsf{bot}} + 8k \tilde{A} \right)$, while by construction 
$$\inf_{t \in [0,1]} \left(f_i^b(t) - f_{i+1}^t(t) \right) = 2\sigma^{-1} \tilde{A} + O(n^{-1/2}),$$
where the constant in the big $O$ notation depends on $p$ alone. \\

{\bf \raggedleft Step 2.} In this and the next step we fix $t_0 \in [\delta_2, 1-\delta_2]$ and proceed to show that (\ref{S41E1}) holds when $n \geq N_5$, for the $\delta^{\mathsf{sep}}, \Delta^{\mathsf{sep}}$ and $N_5$ we constructed in Step 1. Let $s_n \in F_n(t_0n)$ be such that $g(s_n) -p s_n = \max_{s \in F_n(t_0n)} [g(s) - ps]$, and define
$$\tilde{g}(s_n) = g(s_n) \mbox{ and } \tilde{g}(s) = -\infty \mbox{ for }s \in \llbracket 0,n \rrbracket \setminus \{s_n\}.$$
Using the monotone coupling in Lemma \ref{MCL} we see that it suffices to show
\begin{equation}\label{S41E7}
\mathbb{P}_{\ice, \operatorname{Geom}}^{0, n, \vec{x}\,', \vec{y}\,', \infty, \tilde{g}}\left( Q_k(t_0 n) - pt_0n \geq \tilde{g}(s_n) - ps_n + \delta^{\mathsf{sep}} \cdot n^{1/2}  \right) > 1- \eta.
\end{equation}

Let us elaborate why Lemma \ref{MCL} is applicable. From the definition of $\vec{x}\,', \vec{y}\,'$ and $\tilde{g}$ it is clear that $x_i' \leq x_i$, $y_i' \leq y_i$ for $i \in \llbracket 1, k \rrbracket$ and $\tilde{g}(s) \leq g(s)$ for $s \in \llbracket 0, n \rrbracket$. In addition, $\Omega_{\ice}(0,n,\vec{x},\vec{y}, \infty, g)$ is non-empty by assumption. Thus, we only need to check that $\Omega_{\ice}(0,n,\vec{x}\,',\vec{y}\,', \infty, \tilde{g})$ is non-empty. The latter statement would hold if we can show that
\begin{equation}\label{S41E8}
A_1^n \times A_2^n \times \cdots \times A_k^n \subseteq \Omega_{\ice}(0,n,\vec{x}\,',\vec{y}\,', \infty, \tilde{g}),
\end{equation}
since from (\ref{S41E3}) the sets $A_i^n$ are non-empty when $n \geq N_5$.

Let us verify (\ref{S41E8}) briefly. If $Q_i \in A_i^n$ for $i \in \llbracket 1, k \rrbracket$, we need to check
\begin{equation}\label{S41E9}
Q_i(s) \geq Q_{i+1}(s+1) \mbox{ for $i \in \llbracket 1, k-1 \rrbracket$ and $s \in \llbracket 0, n -1\rrbracket$, and } Q_k(s_n-1) \geq \tilde{g}(s_n).
\end{equation}
The first inequality in (\ref{S41E9}) is equivalent to 
$$\mathcal{Q}_i\left( \frac{s}{n} \right) \geq \mathcal{Q}_{i+1}\left( \frac{s+1}{n} \right) + \frac{p}{\sigma n^{1/2}},$$
which follows from (\ref{S41E6}) and the the definition of $A_i^n$. The second inequality in (\ref{S41E9}) follows from
\begin{equation*}
\begin{split}
&Q_k(s_n-1) = p(s_n-1) + \sigma n^{1/2} \mathcal{Q}_k\left(\frac{s_n-1}{n}\right) \geq p(s_n-1) +  \sigma n^{1/2} \cdot f_k^b\left(\frac{s_n-1}{n}\right) \\
& \geq  p(s_n-1) +  \sigma n^{1/2} \cdot  \sigma^{-1} (M^{\mathsf{bot}} + 3 \tilde{A}) \geq ps_n + M^{\mathsf{bot}} \cdot n^{1/2} \geq g(s_n) = \tilde{g}(s_n).
\end{split}
\end{equation*}
The last two equations imply (\ref{S41E9}), which in turn verifies (\ref{S41E8}). Overall, we have reduced the proof of the lemma to showing (\ref{S41E7}), which we do in the next and final step.\\

{\bf \raggedleft Step 3.} From (\ref{S41E3}) and (\ref{S41E8}) we have
\begin{equation*}
\mathbb{P}_{\operatorname{Geom}}^{0, n, \vec{x}\,', \vec{y}\,'}\left(\Omega_{\ice}(0,n,\vec{x}\,',\vec{y}\,', \infty, \tilde{g}) \right) \geq \epsilon_0^k.
\end{equation*}
If $E \subseteq \Omega(0,n, \vec{x}\,',\vec{y}\,')$, we conclude that
\begin{equation}\label{S41E10}
\mathbb{P}_{\ice, \operatorname{Geom}}^{0, n, \vec{x}\,', \vec{y}\,', \infty, \tilde{g}} (E) = \frac{\mathbb{P}_{\operatorname{Geom}}^{0, n, \vec{x}\,', \vec{y}\,'}(E \cap\Omega_{\ice}(0,n,\vec{x}\,',\vec{y}\,', \infty, \tilde{g}) )}{\mathbb{P}_{\operatorname{Geom}}^{0, n, \vec{x}\,', \vec{y}\,'}(\Omega_{\ice}(0,n,\vec{x}\,',\vec{y}\,', \infty, \tilde{g}))} \leq \epsilon_0^{-k} \cdot \mathbb{P}_{\operatorname{Geom}}^{0, n, \vec{x}\,', \vec{y}\,'}(E).
\end{equation}
If we define $E_1, E_2, E_3 \subseteq \Omega(0,n, \vec{x}\,',\vec{y}\,')$ by
$$E_1 = \left\{  \left(Q_k \left( s_n-1 \right) - p (s_n-1) \right) \in [\tilde{g}(s_n) - ps_n -2\delta^{\mathsf{sep}} \cdot n^{1/2}, \tilde{g}(s_n) - ps_n + 2\delta^{\mathsf{sep}} \cdot n^{1/2} ] \right\},$$
$$E_2 = \{ w(\mathcal{Q}_k, 2\Delta^{\mathsf{sep}}) > \sigma^{-1}\delta^{\mathsf{sep}}  \},\hspace{2mm} E_3 = \{ Q_k(t_0n) - pt_0n \geq \tilde{g}(s_n) - ps_n + \delta^{\mathsf{sep}} \cdot n^{1/2}   \},$$
then by (\ref{S41E4}) and (\ref{S41E5}) we have
\begin{equation*}
\mathbb{P}_{\operatorname{Geom}}^{0, n, \vec{x}\,', \vec{y}\,'}(E_1) \leq \epsilon_0^k \cdot (\eta/4) \mbox{ and } \mathbb{P}_{\operatorname{Geom}}^{0, n, \vec{x}\,', \vec{y}\,'}(E_2) \leq \epsilon_0^k \cdot (\eta/4),
\end{equation*}
which together with (\ref{S41E10}) gives
\begin{equation}\label{S41E11}
\mathbb{P}_{\ice, \operatorname{Geom}}^{0, n, \vec{x}\,', \vec{y}\,', \infty, \tilde{g}}(E_1) \leq \eta/4 \mbox{ and } \mathbb{P}_{\ice, \operatorname{Geom}}^{0, n, \vec{x}\,', \vec{y}\,', \infty, \tilde{g}}(E_2) \leq \eta/4.
\end{equation}

We claim that
\begin{equation}\label{S41E12}
E_1^c \cap E_2^c \cap \Omega_{\ice}(0,n,\vec{x}\,',\vec{y}\,', \infty, \tilde{g})  \subseteq E_3 \cap \Omega_{\ice}(0,n,\vec{x}\,',\vec{y}\,', \infty, \tilde{g}).
\end{equation}
If true, then (\ref{S41E11}) would imply $\mathbb{P}_{\ice, \operatorname{Geom}}^{0, n, \vec{x}\,', \vec{y}\,', \infty, \tilde{g}}(E_3) \geq 1- \eta/2$ which gives (\ref{S41E7}). We have thus reduced the proof of the lemma to showing (\ref{S41E12}).

Suppose $\mathfrak{Q} = \{Q_i\}_{i = 1}^k$ is in the left hand side of (\ref{S41E12}). Since $\mathfrak{Q} \in E_1^c$, we have
$$\left(Q_k \left( s_n-1 \right) - p (s_n-1) \right) \not \in [\tilde{g}(s_n) - ps_n -2\delta^{\mathsf{sep}} \cdot n^{1/2}, \tilde{g}(s_n) - ps_n + 2\delta^{\mathsf{sep}} \cdot n^{1/2} ],$$
and since $\mathfrak{Q} \in \Omega_{\ice}(0,n,\vec{x}\,',\vec{y}\,', \infty, \tilde{g})$, we have $Q_k \left( s_n-1 \right) \geq  \tilde{g}(s_n)$. The last two conditions show
\begin{equation}\label{S41E13}
Q_k ( s_n-1 )  - p(s_n-1) \geq \tilde{g}(s_n) - ps_n + 2\delta^{\mathsf{sep}} \cdot n^{1/2}.
\end{equation}
Since $s_n \in F_n(t_0n)$, we have $|(s_n -1) - t_0n | \leq 2 \Delta^{\mathsf{sep}} \cdot n $. The latter shows that for $\mathfrak{Q} \in E_2^c$ we have
\begin{equation}\label{S41E14}
\left| \left( Q_k ( s_n-1 )  - p(s_n-1) \right) -  \left( Q_k ( t_0n )  - pt_0n \right) \right| \leq \delta^{\mathsf{sep}} \cdot n^{1/2}.
\end{equation}
Equations (\ref{S41E13}) and (\ref{S41E14}) together show $\mathfrak{Q} \in E_3 \cap \Omega_{\ice}(0,n,\vec{x}\,',\vec{y}\,', \infty, \tilde{g})$. This completes the proof of (\ref{S41E12}) and hence the lemma. 
\end{proof}

%
\subsection{Modulus of continuity for several curves}\label{Section4.2} The goal of the section  is to prove the following lemma.

\begin{lemma}\label{S42L} Fix $p, \epsilon, \eta, M^{\mathsf{bot}}, M^{\mathsf{side}}, A^{\mathsf{sep}}, A^{\mathsf{gap}}\in (0, \infty)$, $\Delta^{\mathsf{gap}}  \in (0,1/2)$, $k \in \mathbb{N}$, $\sigma = \sqrt{p(1+p)}$. There exist $N_6 \in \mathbb{N}$ and $\delta > 0 $, depending on $p, \epsilon, \eta, M^{\mathsf{bot}}, M^{\mathsf{side}}, A^{\mathsf{sep}}, A^{\mathsf{gap}}, \Delta^{\mathsf{gap}}, k$, such that the following holds for all $n \geq N_6$. If we assume:
\begin{itemize}
\item $\vec{x}, \vec{y}\in \mathfrak{W}_k$ as in (\ref{DefSig}), and $|x_i| \leq M^{\mathsf{side}} \cdot n^{1/2}$, $|y_i - pn| \leq M^{\mathsf{side}} \cdot n^{1/2}$ for $i \in \llbracket 1, k \rrbracket$;
\item $x_i - x_{i+1} \geq A^{\mathsf{sep}} \cdot n^{1/2}$ and $y_i - y_{i+1} \geq A^{\mathsf{sep}} \cdot n^{1/2}$ for $i \in \llbracket 1, k -1 \rrbracket$;
\item $g: \llbracket 0, n \rrbracket \rightarrow [-\infty, \infty)$ is such that $g(s) - ps\leq  M^{\mathsf{bot}} \cdot n^{1/2} $ for all $s \in \llbracket 0, n \rrbracket$;
\item $x_k \geq g(s) - ps + A^{\mathsf{gap}} \cdot n^{1/2} $ for all $s \in \llbracket 0, n\rrbracket$ with $s \leq \Delta^{\mathsf{gap}} \cdot n$;
\item $y_k -pn \geq g(s) - ps + A^{\mathsf{gap}} \cdot n^{1/2} $ for all $s \in \llbracket 0, n\rrbracket$ with $s \geq n - \Delta^{\mathsf{gap}} \cdot n$;
\item the set $\Omega_{\ice}(0,n,\vec{x},\vec{y}, \infty, g)$ is non-empty;
\item for $Q_i \in \Omega(0,n, x,y)$, we set $\mathcal{Q}_i(t)= \sigma^{-1}n^{-1/2}(Q_i(tn) - ptn)$ for $t \in [0,1]$;
\end{itemize}
then the following inequality holds for each $i \in \llbracket 1, k \rrbracket$
\begin{equation}\label{S42E1}
\mathbb{P}_{\ice, \operatorname{Geom}}^{0, n, \vec{x}, \vec{y}, \infty, g}\left( w(\mathcal{Q}_i, \delta) > \epsilon \right) < \eta.
\end{equation}
We recall that $w(f,\delta)$ is the modulus of continuity from (\ref{S33E1}).
\end{lemma}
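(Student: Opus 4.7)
The plan is to reduce to free (non-interlacing) geometric bridges via the same change-of-measure trick used in Lemma \ref{S41L}, and then apply the single-curve modulus-of-continuity estimate from Lemma \ref{S33L}. Concretely, the first step is to show
\begin{equation*}
\mathbb{P}_{\operatorname{Geom}}^{0,n,\vec{x},\vec{y}}\bigl(\Omega_{\ice}(0,n,\vec{x},\vec{y}, \infty, g)\bigr) \;\geq\; c
\end{equation*}
for some constant $c > 0$ depending only on the parameters of the lemma. Once this is established, for any measurable event $E$ on paths one has
\begin{equation*}
\mathbb{P}_{\ice, \operatorname{Geom}}^{0,n,\vec{x},\vec{y},\infty,g}(E) \;=\; \frac{\mathbb{P}_{\operatorname{Geom}}^{0,n,\vec{x},\vec{y}}(E\cap \Omega_{\ice})}{\mathbb{P}_{\operatorname{Geom}}^{0,n,\vec{x},\vec{y}}(\Omega_{\ice})} \;\leq\; c^{-1}\, \mathbb{P}_{\operatorname{Geom}}^{0,n,\vec{x},\vec{y}}(E).
\end{equation*}
Taking $E = \{w(\mathcal{Q}_i, \delta) > \epsilon\}$, the right-hand side depends only on the marginal law of $Q_i$ under the free product measure, which is $\mathbb{P}_{\operatorname{Geom}}^{0,n,x_i,y_i}$; the hypothesis $|x_i|, |y_i - pn| \leq M^{\mathsf{side}} n^{1/2}$ places us in the scope of Lemma \ref{S33L}, which produces $\delta > 0$ and $N_3$ (depending on the parameters together with $c$) such that this marginal probability is at most $c\eta$. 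Combining with the previous display yields (\ref{S42E1}).

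To prove the lower bound on $\mathbb{P}_{\operatorname{Geom}}^{0,n,\vec{x},\vec{y}}(\Omega_{\ice})$, I will build $k$ pairwise-disjoint corridors of the type constructed in Lemma \ref{S34L}, one per curve, so that the event $\bigcap_{i=1}^k \{Q_i \text{ lies in corridor } i\}$ is contained in $\Omega_{\ice}(0, n, \vec{x}, \vec{y}, \infty, g)$. I fix a small $A > 0$, a $\delta_1 \in (0, \Delta^{\mathsf{gap}})$, and heights $B_1 > B_2 > \cdots > B_k$ with $B_i - B_{i+1}$ slightly exceeding $2A$ (so consecutive corridors are vertically separated in the bulk) and with $\sigma(B_k - A) > M^{\mathsf{bot}}$ (so the lowest corridor clears the bulk bound on $g$). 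The corridor for curve $i$ is the strip between $f^b_i$ and $f^t_i$ defined by (\ref{S34E1}) with parameters $(\pm A, B_i, \sigma^{-1} n^{-1/2} x_i, \sigma^{-1} n^{-1/2}(y_i - pn), \delta_1)$. Lemma \ref{S34L} then guarantees that for $n$ large each free bridge stays in its corridor with probability at least some $\epsilon_0 > 0$, and by independence the joint event has probability at least $\epsilon_0^k$, which will serve as $c$.

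To finish, one must verify that the corridor event is actually contained in $\Omega_{\ice}$, namely that (a) consecutive corridors are separated enough to enforce the interlacing $Q_i(s) \geq Q_{i+1}(s+1)$ for all $s \in \llbracket 0, n-1 \rrbracket$, and (b) the lowest corridor sits above $g$ on all of $\llbracket 0, n \rrbracket$. Property (a) holds in the bulk by the choice $B_i - B_{i+1} > 2A$; near the edges it uses the assumed separations $x_i - x_{i+1}, y_i - y_{i+1} \geq A^{\mathsf{sep}} n^{1/2}$ together with a constraint of the form $A \leq A^{\mathsf{sep}}/(2\sigma) - \mathrm{slack}$ and Lipschitz continuity of the corridor boundaries. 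Property (b) holds in the bulk from $\sigma(B_k - A) > M^{\mathsf{bot}}$; near the edges it relies on the edge-gap hypotheses $x_k \geq g(s) - ps + A^{\mathsf{gap}} n^{1/2}$ for $s \in \llbracket 0, \Delta^{\mathsf{gap}} n\rrbracket$ and its symmetric counterpart at the right edge, provided $\delta_1 \leq \Delta^{\mathsf{gap}}$ and $A \leq A^{\mathsf{gap}}/\sigma$. The main obstacle is precisely this parameter chase: one must choose $A$, $\delta_1$, and the heights $B_1, \ldots, B_k$ simultaneously to satisfy Lemma \ref{S34L} together with the quantitative constraints from (a) and (b). Since all the constraints are linear in the input parameters of the lemma (and none involves $n$), a compatible choice is possible, after which the remainder of the argument is routine.
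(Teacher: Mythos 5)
Your proposal is correct and follows essentially the same route as the paper: construct $k$ disjoint corridors via Lemma \ref{S34L} (the paper takes $\tilde{A}=\min(A^{\mathsf{sep}},A^{\mathsf{gap}})/4$, $\delta_1=\Delta^{\mathsf{gap}}/2$, and heights $B_i=\sigma^{-1}(M^{\mathsf{bot}}+M^{\mathsf{side}}+4(k-i+1)\tilde{A})$), show the product of corridor events lies in $\Omega_{\ice}(0,n,\vec{x},\vec{y},\infty,g)$ to get $\mathbb{P}_{\operatorname{Geom}}^{0,n,\vec{x},\vec{y}}(\Omega_{\ice})\geq\epsilon_0^k$, and then transfer Lemma \ref{S33L} applied with tolerance $\epsilon_0^k\eta$ through the change of measure. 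The only caveat is that your edge constraints need a little slack beyond $A\leq A^{\mathsf{gap}}/\sigma$ to absorb the $O(n^{-1/2})$ discrepancy from the shift $s\mapsto s+1$ in the interlacing condition, which your choice of constants easily accommodates.
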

\begin{remark} In plain words, the lemma states that if the bottom boundary $g$ is not too high, and the side boundaries $\vec{x}, \vec{y}$ are well-behaved (i.e. their coordinates are not too high or low, and are well-separated from each other and the bottom boundary $g$), then the curves in the ensemble with law $\mathbb{P}_{\ice, \operatorname{Geom}}^{0, n, \vec{x}, \vec{y}, \infty, g}$ have a well-behaved modulus of continuity.
\end{remark}

\begin{proof} The proof of the lemma has similar ideas to that of Lemma \ref{S41L}. Namely, we construct likely events $A_i^n \subseteq \Omega_{\ice}(0,n, x_i, y_i)$, which are events that paths stay inside of corridors of the type in Lemma \ref{S34L}. We show for these that $A_1^n \times \cdots \times A_k^n \subseteq \Omega_{\ice}(0,n,\vec{x},\vec{y}, \infty, g)$, which in turn shows $\mathbb{P}_{\operatorname{Geom}}^{0, n, \vec{x}, \vec{y}}\left(\Omega_{\ice}(0,n,\vec{x},\vec{y}, \infty, g) \right)$ is bounded away from zero. The latter allows us to compare events for interlacing bridges with non-interlacing ones. Since from Lemma \ref{S33L} the modulus of continuity of free bridges is well-behaved, we can conclude the same for the interlacing bridges in the statement of the lemma. We now turn to the details of the proof, which is split in two steps for clarity.\\

{\bf \raggedleft Step 1.} Fix $\tilde{A} = \min(A^{\mathsf{sep}}, A^{\mathsf{gap}})/4$ and define the functions
$$f^t_i(s) = f(s|\sigma^{-1}\tilde{A}, \sigma^{-1} (M^{\mathsf{bot}} +M^{\mathsf{side}}  + 4 (k-i + 1)\tilde{A}), \sigma^{-1} n^{-1/2} x_i, \sigma^{-1} n^{-1/2}(y_i- pn), \Delta^{\mathsf{gap}}/2) $$
$$f^b_i(s) = f(s|-\sigma^{-1}\tilde{A}, \sigma^{-1} (M^{\mathsf{bot}} +M^{\mathsf{side}}  + 4 (k-i + 1)\tilde{A}), \sigma^{-1} n^{-1/2} x_i, \sigma^{-1} n^{-1/2}(y_i- pn), \Delta^{\mathsf{gap}}/2), $$
where we recall that $f(s|A,B,x,y,\delta)$ is as in (\ref{S34E1}).

Applying Lemma \ref{S34L} with $p$ as in the present lemma, $\delta_1 =\Delta^{\mathsf{gap}}/2$, $A = \sigma^{-1}\tilde{A}$, $M_1 = M^{\mathsf{side}}$, $M_2 = \sigma^{-1}(M^{\mathsf{bot}} +M^{\mathsf{side}} + 4k \tilde{A}) $, we conclude that there exist $N_4 \in \mathbb{N}$ and $\epsilon_0 > 0$ such that for $n \geq N_4$ and $i \in \llbracket 1, k \rrbracket$
\begin{equation}\label{S42E2}
\mathbb{P}_{\operatorname{Geom}}^{0,n, x_i, y_i} \left( f^t_i(s) \geq \mathcal{Q}_1(s) \geq f^b_i(s) \mbox{ for all } s\in [0,1] \right) > \epsilon_0.
\end{equation}
If $A^n_i$ is the subset of increasing paths $Q_1 \in \Omega(0,n, x_i', y_i')$ that satisfy the conditions in (\ref{S42E2}), then (\ref{S42E2}) is equivalent to 
\begin{equation}\label{S42E3}
\frac{|A_i^n|}{| \Omega(0,n, x_i, y_i)|} > \epsilon_0.
\end{equation}

Applying Lemma \ref{S33L} with $p, \epsilon$ as in the present lemma, $M = M^{\mathsf{side}}$, and $\eta =\epsilon_0^k \cdot \eta$, we can find $N_3 \in \mathbb{N}$ and $\delta > 0$ such that for $n \geq N_3$ and $i \in \llbracket 1, k \rrbracket$
\begin{equation}\label{S42E4}
\mathbb{P}_{\operatorname{Geom}}^{0,n, x_i, y_i} \left( w(\mathcal{Q}_1, \delta) > \epsilon  \right) < \epsilon_0^k \cdot \eta.
\end{equation}
This specifies our choice of $\delta$ in the lemma. We also let $N_6$ be sufficiently large so that for $n\geq N_6$ we have $n \geq \max(N_3, N_4)$ as above, $\tilde{A} n^{1/2} \geq p$, $\Delta^{\mathsf{gap}}/2 \geq 1/n$ and for $s \in \llbracket 0, n-1\rrbracket$ and $i \in \llbracket 1, k-1\rrbracket$
\begin{equation}\label{S42E5}
f_i^{b}\left( \frac{s}{n} \right) \geq f_{i+1}^{t}\left( \frac{s+1}{n}\right) + \frac{p}{\sigma n^{1/2}}.
\end{equation}
We mention that our choice of $N_6$ is possible since the functions $f_{i+1}^{t}$ and $f_i^b$ are Lipschitz continuous with constant $(\Delta^{\mathsf{gap}}/2)^{-1} \cdot \sigma^{-1} \left(M^{\mathsf{side}} + M^{\mathsf{bot}} + 4k \tilde{A} \right)$, while by construction 
$$\inf_{t \in [0,1]} \left(f_i^b(t) - f_{i+1}(t) \right) = 2\sigma^{-1} \tilde{A} + O(n^{-1/2}),$$
where the constant in the big $O$ notation depends on $p$ alone. This specifies our choice of $N_6$.\\

We proceed to prove (\ref{S42E1}) for fixed $i \in \llbracket 1, k \rrbracket$, $n \geq N_6$ and $\delta$ as above. We claim that 
\begin{equation}\label{S42E6}
A_1^n \times A_2^n \times \cdots \times A_k^n \subseteq \Omega_{\ice}(0,n,\vec{x},\vec{y}, \infty, g).
\end{equation}
We prove (\ref{S42E6}) in the second step. Here, we assume its validity and conclude the proof of (\ref{S42E1}).

Using (\ref{S42E3}) and (\ref{S42E6}) we have for each $E \subseteq \Omega(0,n, \vec{x},\vec{y})$ 
\begin{equation}\label{S42E7}
\mathbb{P}_{\ice, \operatorname{Geom}}^{0, n, \vec{x}, \vec{y}, \infty, g} (E) = \frac{\mathbb{P}_{\operatorname{Geom}}^{0, n, \vec{x}, \vec{y}}(E \cap\Omega_{\ice}(0,n,\vec{x},\vec{y}, \infty, g) )}{\mathbb{P}_{\operatorname{Geom}}^{0, n, \vec{x}, \vec{y}}(\Omega_{\ice}(0,n,\vec{x},\vec{y}, \infty, g))} \leq \epsilon_0^{-k} \cdot \mathbb{P}_{\operatorname{Geom}}^{0, n, \vec{x}, \vec{y}}(E).
\end{equation}
Letting $E$ be the set of $\mathfrak{Q} = \{Q_j\}_{j = 1}^k$ such that $w(\mathcal{Q}_i, \delta) > \epsilon$ in (\ref{S42E7}) and using (\ref{S42E4}) gives (\ref{S42E1}).\\

{\bf \raggedleft Step 2.} In this step we verify (\ref{S42E6}). If $Q_i \in A_i^n$ for $i \in \llbracket 1, k \rrbracket$, we need to check
\begin{equation}\label{S42E8}
\begin{split}
&Q_i(s) \geq Q_{i+1}(s+1) \mbox{ for $i \in \llbracket 1, k-1 \rrbracket$ and $s \in \llbracket 0, n -1\rrbracket$, and } \\
& Q_k(s) \geq g(s+1) \mbox{ for } s \in \llbracket 0, n -1\rrbracket.
\end{split}
\end{equation}
The first line in (\ref{S42E8}) is equivalent to 
$$\mathcal{Q}_i\left( \frac{s}{n} \right) \geq \mathcal{Q}_{i+1}\left( \frac{s+1}{n} \right) + \frac{p}{\sigma n^{1/2}},$$
which follows from (\ref{S42E5}) and the the definition of $A_i^n$. In the remainder of this step we verify the second line in (\ref{S42E8}).\\

Suppose first that $s \in [0, (\Delta^{\mathsf{gap}}/2) \cdot n]$ and so $s + 1 \in [0, \Delta^{\mathsf{gap}}\cdot n]$. We have 
\begin{equation*}
\begin{split}
&Q_k(s) = \sigma n^{1/2} \cdot \mathcal{Q}_k(s/n) + ps \geq \sigma n^{1/2} \cdot f_k^b(s/n) + ps \geq \sigma n^{1/2} \cdot f_k^b(0) + ps \\
& = x_k - \tilde{A} \cdot n^{1/2} + ps \geq [g(s + 1) - p(s+1) +  A^{\mathsf{gap}} \cdot n^{1/2} ]- \tilde{A} \cdot n^{1/2} + ps \geq g(s + 1),
\end{split}
\end{equation*}
where in the first inequality we used $Q_k \in A_k^n$, in the second one and in going from the first to the second line we used the definition of $f_k^b$, in the first inequality on the second line we used the $x_k$ lower bound in the statement of the lemma, and in the last inequality we used that $A^{\mathsf{gap}} \geq 4 \tilde{A}$.

Suppose next that $s \in [(1-\Delta^{\mathsf{gap}}/2) \cdot n, n-1]$. We have 
\begin{equation*}
\begin{split}
&Q_k(s) = \sigma n^{1/2} \cdot \mathcal{Q}_k(s/n) + ps \geq \sigma n^{1/2} \cdot f_k^b(s/n) + ps \geq \sigma n^{1/2} \cdot f_k^b(1) + ps \\
& = y_k - pn - \tilde{A} \cdot n^{1/2} + ps \geq [g(s + 1) - p(s+1) +  A^{\mathsf{gap}} \cdot n^{1/2} ]- \tilde{A} \cdot n^{1/2} + ps \geq g(s + 1),
\end{split}
\end{equation*}
where in the first inequality we used $Q_k \in A_k^n$, in the second one and in going from the first to the second line we used the definition of $f_k^b$, in the first inequality on the second line we used the $y_k$ lower bound in statement of the lemma, and in the last inequality we used that $A^{\mathsf{gap}} \geq 4 \tilde{A}$.

Finally, suppose that $s \in [(\Delta^{\mathsf{gap}}/2) \cdot n, (1-\Delta^{\mathsf{gap}}/2) \cdot n]$. We have
\begin{equation*}
\begin{split}
&Q_k(s) = \sigma n^{1/2} \cdot \mathcal{Q}_k(s/n) + ps \geq \sigma n^{1/2} \cdot f_k^b(s/n) + ps =  n^{1/2} \cdot (M^{\mathsf{bot}} + M^{\mathsf{side}} +3\tilde{A})  + ps \\
& \geq [g(s + 1) - p(s+1)] + n^{1/2} \cdot ( M^{\mathsf{side}} +3\tilde{A})  + ps \geq g(s + 1),
\end{split}
\end{equation*}
where in the first inequality we used $Q_k \in A_k^n$, in the second one we used the definition of $f_k^b$, and in going from the first to the second line we used the upper bound for $g$ in the statement of the lemma. The last three displayed equations prove the second line in (\ref{S42E8}), and hence the lemma.
\end{proof}

%
\section{General conditions for tightness}\label{Section5} We state the main result of the paper, after which we deduce Theorem \ref{S1T1} from it. We continue with the same notation as in Section \ref{Section2}.

\begin{theorem}\label{S5T} Fix $p \in (0, \infty)$, $\sigma = \sqrt{p(1+p)}$, $K \in \mathbb{N} \cup \{\infty\}$, $\Sigma = \llbracket 1, K + 1 \rrbracket$ and $\Lambda = (\alpha,\beta) \subseteq \mathbb{R}$ to be any non-empty open interval. We further suppose that we have sequences $d_N \in (0,\infty)$, $\hat{A}_N, \hat{B}_N \in \mathbb{Z}$ with $\hat{A}_N \leq \hat{B}_N$, $K_N \in \Sigma \cup \{\infty\}$ and $\Sigma$-indexed geometric line ensembles $\mathfrak{L}^N = \{L^N_i\}_{i \in \Sigma}$ on $\mathbb{Z}$ that satisfy the following conditions:
\begin{itemize}
\item $d_N \rightarrow \infty$, $\hat{A}_N/d_N \rightarrow \alpha$, $\hat{B}_N/d_N \rightarrow \beta$, $K_N \rightarrow K+1$ as $N \rightarrow \infty$;
\item for each $t \in \Lambda$ and $i \in \llbracket 1, K \rrbracket$ the random variables $\sigma^{-1} d_N^{-1/2}  \left( L_i^N(\lfloor t d_N \rfloor) - ptd_N \right)$ are tight;
\item the restriction $\{L_i^N(s): i \in \llbracket 1, K_N \rrbracket$ and $s \in \llbracket \hat{A}_N, \hat{B}_N\rrbracket \}$, satisfies the interlacing Gibbs property from Definition \ref{DefSGP}.
\end{itemize}
Then, the sequence of line ensembles $\mathcal{L}^N = \{\mathcal{L}_i^N\}_{i = 1}^{K} \in  C(\llbracket 1, K \rrbracket \times \Lambda)$, defined through 
$\mathcal{L}_i^N(t) = \sigma^{-1} d_N^{-1/2}  \left( L_i^N(td_N) - ptd_N \right),$
is tight. Moreover, any subsequential limit satisfies the partial Brownian Gibbs property from \cite[Definition 2.7]{DM21}.
\end{theorem}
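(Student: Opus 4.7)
The plan is to prove tightness of $\mathcal{L}^N$ in $C(\llbracket 1, K\rrbracket \times \Lambda)$ via Arzel\`a--Ascoli, combining the assumed one-point tightness with modulus-of-continuity estimates on each compact $[c,d]\subset \Lambda$; the subsequential limits will then be shown to satisfy the partial Brownian Gibbs property by applying Lemma \ref{lem:RW} to the Gibbs kernels of the prelimit ensembles.

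First I would fix $i \in \llbracket 1, K\rrbracket$, a compact $[c,d] \subset \Lambda$, and nest $\alpha < a < a' < c < d < b' < b < \beta$. For large $N$ we have $[a,b] \subset d_N^{-1}\llbracket \hat A_N, \hat B_N\rrbracket$ and $i \leq K_N-1$, so the interlacing Gibbs property gives that the conditional law of $(L_1^N,\ldots, L_i^N)$ on $\llbracket \lfloor a d_N\rfloor, \lfloor b d_N\rfloor\rrbracket$, given the endpoint values $\vec{X}^N, \vec{Y}^N$ and the full trajectory of $L_{i+1}^N$, is $\mathbb{P}_{\ice,\operatorname{Geom}}^{\lfloor a d_N\rfloor,\lfloor b d_N\rfloor,\vec{X}^N,\vec{Y}^N,\infty,L_{i+1}^N}$. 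I would then construct a good event $G_N$ on which (i) the side boundary values $|X_j^N|$ and $|Y_j^N - p\lfloor b d_N\rfloor|$ are $O(d_N^{1/2})$ for $j\leq i$, which follows from the one-point tightness of each $\mathcal{L}_j^N$ at $a$ and $b$; and (ii) $L_{i+1}^N(s) - ps \leq M^{\mathsf{bot}} d_N^{1/2}$ uniformly on $\llbracket \lfloor a d_N\rfloor, \lfloor b d_N\rfloor\rrbracket$.

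On $G_N$, I would iteratively apply Lemma \ref{S41L} at times $\lfloor a' d_N\rfloor$ and $\lfloor b' d_N\rfloor$: first to $L_i^N$ with $L_{i+1}^N$ as bottom boundary, then to $L_{i-1}^N$ treating the now-separated $L_i^N$ as a new (lowered) bottom boundary, and so on down to $L_1^N$. The cumulative effect is a further high-probability event on which, at both $\lfloor a' d_N\rfloor$ and $\lfloor b' d_N\rfloor$, the values $L_1^N,\ldots,L_i^N$ are pairwise separated by at least $\delta^{\mathsf{sep}} d_N^{1/2}$ and $L_i^N$ is separated from the local maximum of $L_{i+1}^N - p(\cdot)$ by the same margin. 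Re-applying the Gibbs property on $\llbracket \lfloor a' d_N\rfloor, \lfloor b' d_N\rfloor\rrbracket$ with these well-behaved side data fulfils all the hypotheses of Lemma \ref{S42L}, and the lemma provides the modulus-of-continuity estimate on $[c,d]\subset [a',b']$. Combined with one-point tightness, Arzel\`a--Ascoli then yields tightness of $\mathcal{L}^N$ in $C(\llbracket 1, K\rrbracket\times \Lambda)$.

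For the partial Brownian Gibbs property, take any subsequential limit $\mathcal{L}^\infty$ and pass to almost sure convergence via Skorokhod. Fix $\llbracket k_1, k_2\rrbracket \subseteq \llbracket 1, K\rrbracket$ and $[s,t]\subset \Lambda$. By the interlacing Gibbs property, the prelimit conditional law of $(L_{k_1}^N,\ldots, L_{k_2}^N)$ on $\llbracket \lfloor sd_N\rfloor, \lfloor td_N\rfloor\rrbracket$ is an $(F_N, G_N)$-interlacing geometric line ensemble determined by $L_{k_1-1}^N$ and $L_{k_2+1}^N$ (with $F_N = +\infty$ if $k_1 = 1$), whose rescaled boundary data converge uniformly to those read off from $\mathcal{L}^\infty$; Lemma \ref{lem:RW} identifies the weak limit of the rescaled conditional law as the matching $(f,g)$-avoiding Brownian line ensemble, which is exactly the Brownian Gibbs kernel applied to $\mathcal{L}^\infty$. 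The principal obstacle in the tightness half is point (ii) above: one-point tightness of $L_{i+1}^N$ at finitely many times plus monotonicity yields only $L_{i+1}^N(s) - ps \leq L_{i+1}^N(\lfloor t_{\ell+1} d_N\rfloor) - pt_\ell d_N$ for $s \in \llbracket \lfloor t_\ell d_N\rfloor, \lfloor t_{\ell+1} d_N\rfloor\rrbracket$, which forces a mesh of spacing $O(d_N^{-1/2})$ and a union bound over $O(d_N^{1/2})$ points that cannot close under pointwise tightness alone. I expect this to be circumvented by an induction on $i$: for $i \geq 2$ the interlacing bound $L_{i+1}^N \leq L_{i-1}^N$ combined with the inductive modulus-of-continuity estimate for $\mathcal{L}_{i-1}^N$ controls $\sup_{s}(L_{i+1}^N(s)-ps)$ in rescaled coordinates, while the base case $i = 1$ will require a separate treatment, likely via applying Lemmas \ref{S41L} and \ref{S42L} simultaneously to the top two curves combined with the monotone coupling of Lemma \ref{MCL}.
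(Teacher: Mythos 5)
Your architecture matches the paper's almost exactly: control the side data by one-point tightness, separate consecutive curves at two fixed times via Lemma \ref{S41L}, feed the resulting well-behaved boundary data into Lemma \ref{S42L} for the modulus of continuity, and identify the limiting Gibbs kernel via Skorohod plus Lemma \ref{lem:RW}. You have also correctly isolated the one genuinely non-trivial missing ingredient, namely the uniform-in-$s$ upper bound $\sup_{s}(L_{i+1}^N(s)-ps)\leq M^{\mathsf{bot}}d_N^{1/2}$ required as a hypothesis by both Lemma \ref{S41L} and Lemma \ref{S42L}. However, the fix you propose for it does not close. Your induction on $i$ is circular at the base: to run Lemma \ref{S42L} (or \ref{S41L}) for the top curve, or for the top two curves jointly, you still need a uniform upper bound on the bottom boundary ($L_2^N$ or $L_3^N$), and by interlacing the only a priori control on those is $L_j^N(s)\leq L_1^N(s)$ --- i.e.\ precisely the supremum bound on the top curve that you are trying to establish. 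The monotone coupling of Lemma \ref{MCL} does not rescue this, because it couples ensembles with \emph{ordered} boundary data and cannot manufacture an upper bound on an uncontrolled lower boundary.

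The paper resolves this with a separate, self-contained statement (Lemma \ref{S51L}, ``no big max''): with high probability $\max_{t\in[a,b]}(L_1^N(td_N)-ptd_N)\leq M^{\mathsf{top}}d_N^{1/2}$. Its proof avoids any mesh or union bound over $O(d_N^{1/2})$ points by decomposing the exceedance event over the \emph{last} time $m$ at which $L_1^N(m)-pm$ exceeds the level $(5R+3)d_N^{1/2}$ (the events $G_m(R)$), applying the interlacing Gibbs property on $\llbracket A_N,m\rrbracket$, and invoking Lemma \ref{S32L} to show that, conditionally on being high at time $m$ and on a controlled left endpoint, the curve is also high at the fixed midpoint $M_N$ with probability at least $1/3$; summing over $m$ then contradicts one-point tightness at $M_N$ unless the exceedance probability is small. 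Since interlacing forces $L_1^N(s)\geq L_2^N(s)\geq\cdots$ pointwise, this single top-curve bound supplies the hypothesis on the bottom boundary for every $i$ simultaneously, with no induction. You would need to add this argument (or an equivalent one) for your proof to go through; the remainder of your outline, including the Brownian Gibbs step (which additionally uses the separation of Lemma \ref{S52L} to guarantee that the limiting boundary data lie in the open Weyl chamber so that Lemma \ref{lem:RW} applies), is sound.
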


\begin{proof}[Proof of Theorem \ref{S1T1}] Define $\tilde{\mathfrak{L}}^N = \{\tilde{L}^N_i\}_{i \geq 1}$ via $\tilde{L}_i^N(t) = L_i^N(t) - \lfloor C_N \rfloor$ for $t \in \mathbb{R}$ and $N \geq 1$. We also define $\tilde{\mathcal{L}}^N = \{\tilde{\mathcal{L}}^N_i\}_{i \geq 1}$ via $\tilde{\mathcal{L}}_i^N(t) = \sigma^{-1} d_N^{-1/2} (\tilde{L}_i^N(td_N) -  ptd_N)$. From Assumptions 1 and 2, we see that $\tilde{\mathfrak{L}}^N$ satisfies the conditions of Theorem \ref{S5T} with the same $p, d_N, \hat{A}_N, \hat{B}_N$ as in the present theorem, $K = K_N = \infty$, and $\Lambda = \mathbb{R}$. Indeed, all but the third point in Theorem \ref{S5T} are trivially satisfied. Also, from Assumption 2, we have for each $m \geq 1$ that $(\tilde{L}^N_i(s): i \in \llbracket 1, m\rrbracket, s \in \llbracket \hat{A}_N, \hat{B}_N \rrbracket)$ is a convex combination of measures of the form $\mathbb{P}_{\ice, \operatorname{Geom}}^{\hat{A}_N, \hat{B}_N, \vec{x}, \vec{y}, \infty, g}$ as in Definition \ref{DefSGP}, with different $\vec{x}, \vec{y}, g$. The latter and Lemma \ref{Lem.FinEnsSatGB} show that $\{\tilde{L}_i^N(s): i \in \llbracket 1, \infty \rrbracket$ and $s \in \llbracket \hat{A}_N, \hat{B}_N\rrbracket \}$ satisfies the interlacing Gibbs property from Definition \ref{DefSGP}, verifying the third point in Theorem \ref{S5T}.

From Theorem \ref{S5T} we conclude that $\tilde{\mathcal{L}}^N $ is tight and any subsequential limit satisfies the partial Brownian Gibbs property. Since for $i \in \mathbb{N}, t \in \mathbb{R}$
$$ \left|\mathcal{L}^N_i(t) - \tilde{\mathcal{L}}_i^N(t)\right| = \sigma^{-1} d_N^{-1/2} (C_N - \lfloor C_N \rfloor) \leq \sigma^{-1} d_N^{-1/2} \rightarrow 0,$$
we conclude the same for $\mathcal{L}^N$.
\end{proof}

The proof of Theorem \ref{S5T} is the content of the remainder of this section, and in all statements we make the same assumptions as in the statement of the theorem. We mention that the tightness assumption in Theorem \ref{S5T} ensures the existence of functions $\psi(\cdot |i, t): (0,\infty) \rightarrow (0, \infty)$ such that for each $i \in \Sigma$, $t \in \Lambda$, $\epsilon \in (0, \infty)$ we have for all $N \in \mathbb{N}$
\begin{equation}\label{S5E1}
\mathbb{P}\left( \left| L_i^N(\lfloor t d_N \rfloor) - ptd_N \right| > d_N^{1/2} \cdot \psi(\epsilon| i, t) \right) \leq \epsilon.
\end{equation}
Throughout our proofs in the following sections we will encounter various constants that depend on $p$, the sequences $d_N, \hat{A}_N, \hat{B}_N$ in the statement of Theorem \ref{S5T} and also the functions $\psi(\cdot|i,t)$. We will not list this dependence explicitly. 

%
\subsection{No big max}\label{Section5.1} The goal of this section is to prove the following lemma.

\begin{lemma}\label{S51L} For any $a, b \in \Lambda$ with $a < b$, and $\varepsilon > 0$ we can find $W_1 \in \mathbb{N}$ and $M^{\mathsf{top}} > 0$, depending on $a,b$ and $\varepsilon$, such that for $N \geq W_1$ we have 
\begin{equation}\label{S51E1}
\mathbb{P}\left( \max_{t \in [a,b]} \left(L_1^N(td_N) - p td_N \right) \geq d_N^{1/2} \cdot M^{\mathsf{top}}  \right) < \varepsilon.
\end{equation}
\end{lemma}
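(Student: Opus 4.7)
The plan is to reduce the problem to a bound on a single interlacing geometric bridge via the interlacing Gibbs property, and then to a bound on a free geometric bridge via the standard conditional-probability identity, so that Proposition \ref{KMT} applies.

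First, pick auxiliary times $t_-, t_+ \in \Lambda$ with $\alpha < t_- < a < b < t_+ < \beta$, and set $\hat{A}_N' = \lfloor t_- d_N \rfloor$ and $\hat{B}_N' = \lfloor t_+ d_N \rfloor$. By the one-point tightness assumption applied to $L_1^N$ and $L_2^N$ at times $\hat{A}_N'$ and $\hat{B}_N'$, there exists $M_0 = M_0(a,b,\varepsilon) > 0$ such that on an event $E_1$ of probability at least $1-\varepsilon/2$ (uniformly in $N$ large), all four values $L_i^N(\hat{A}_N'), L_i^N(\hat{B}_N')$ for $i = 1, 2$ lie within $M_0 d_N^{1/2}$ of $pt_\pm d_N$. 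Conditioning on $E_1$ and on the values $L_1^N(\hat{A}_N'), L_1^N(\hat{B}_N')$, together with the entire trajectory of $L_2^N$ on $[\hat{A}_N', \hat{B}_N']$, the interlacing Gibbs property (Definition \ref{DefSGP}) applied to the singleton $K = \{1\}$ tells us that the conditional law of $L_1^N$ on $[\hat{A}_N', \hat{B}_N']$ is $\mathbb{P}_{\ice, \operatorname{Geom}}^{\hat{A}_N', \hat{B}_N', x, y, \infty, L_2^N}$ with $x = L_1^N(\hat{A}_N')$ and $y = L_1^N(\hat{B}_N')$.

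Next, I would exploit the elementary identity
\begin{equation*}
\mathbb{P}_{\ice, \operatorname{Geom}}^{\hat{A}_N', \hat{B}_N', x, y, \infty, L_2^N}(A) = \frac{\mathbb{P}_{\operatorname{Geom}}^{\hat{A}_N', \hat{B}_N', x, y}(A \cap \Omega_{\ice})}{\mathbb{P}_{\operatorname{Geom}}^{\hat{A}_N', \hat{B}_N', x, y}(\Omega_{\ice})},
\end{equation*}
taking $A = \{\max_{s \in [ad_N, bd_N]}(Q_1(s) - ps) \geq M^{\mathsf{top}} d_N^{1/2}\}$. The numerator is bounded by $\mathbb{P}_{\operatorname{Geom}}^{\hat{A}_N', \hat{B}_N', x, y}(A)$, which is the probability of a large centered max for a \emph{free} geometric bridge. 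Proposition \ref{KMT} couples this bridge to a Brownian bridge with variance $\sigma^2$ on $[t_-, t_+]$, whose sup is $O(1)$ in the rescaled coordinates, yielding $\mathbb{P}_{\operatorname{Geom}}(A) < \varepsilon'$ for $M^{\mathsf{top}}$ sufficiently large (depending on $a, b$ and the endpoint parameters).

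For the denominator, I would apply Lemma \ref{S34L} (staying in a corridor) to the free bridge: with positive probability $\epsilon_0 > 0$, its rescaling stays in a narrow corridor of width $O(d_N^{1/2})$ about the typical linear trajectory between the endpoints, which is approximately $ps + O(d_N^{1/2})$ on $[\hat{A}_N', \hat{B}_N']$. On $E_1$, monotonicity of $L_2^N$ combined with the endpoint bound $L_2^N(\hat{B}_N') \leq pt_+ d_N + M_0 d_N^{1/2}$ gives a uniform (albeit coarse) upper bound for $L_2^N$. Using Lemma \ref{MCL} I would further reduce to the situation where $L_2^N$ is replaced with a more tractable deterministic lower barrier lying above $L_2^N$, and then tune the parameters of the corridor in Lemma \ref{S34L} so that the corridor event implies $\Omega_{\ice}$, giving $\mathbb{P}_{\operatorname{Geom}}(\Omega_{\ice}) \geq \epsilon_0 / 2$. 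Combining these estimates yields $\mathbb{P}_{\ice}(A) \leq 2\varepsilon'/\epsilon_0 < \varepsilon/2$, and absorbing the probability of the complement of $E_1$ gives the statement.

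The main obstacle is Step (b), i.e.\ producing a lower bound on $\mathbb{P}_{\operatorname{Geom}}(\Omega_{\ice})$ that is uniform in $L_2^N$. The crude monotone upper bound $L_2^N(s) \leq pt_+ d_N + M_0 d_N^{1/2}$ is $O(d_N)$ above $ps$ near $s = \hat{A}_N'$, whereas Lemma \ref{S34L}'s corridor is only $O(d_N^{1/2})$ wide; reconciling these scales is where the argument requires genuine care, and is likely to be accomplished by a careful choice of the corridor's height/tilt and by exploiting the endpoint separation between $L_1^N$ and $L_2^N$ provided by the one-point tightness of both curves on $E_1$.
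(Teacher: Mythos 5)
Your reduction has a genuine gap at exactly the point you flag, and I do not believe it can be closed along the lines you propose. Your strategy needs a lower bound on $\mathbb{P}_{\operatorname{Geom}}^{\hat{A}_N',\hat{B}_N',x,y}(\Omega_{\ice})$ that is uniform over the realizations of $L_2^N$ compatible with $E_1$. But $E_1$ only controls $L_2^N$ at the two endpoints; in between, the only a priori bounds are monotonicity and $L_2^N(s)\leq L_1^N(s-1)$, so $L_2^N$ could (a priori) spike to height $\approx p t_+ d_N$ well before $s=\hat{B}_N'$, in which case a free bridge started at $x\approx p t_- d_N$ stays above it with probability that is not bounded below (indeed essentially zero). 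Ruling out such spikes is precisely a no-big-max statement for $L_2^N$, which in this paper is \emph{deduced from} the no-big-max for $L_1^N$ via interlacing (cf.\ (\ref{S53E7})) — so your argument is circular. The same circularity affects your appeal to ``endpoint separation between $L_1^N$ and $L_2^N$'': one-point tightness gives no separation; separation is Lemma \ref{S52L}, whose proof uses Lemma \ref{S51L}. The ratio/comparison technique you invoke is the one used in Lemmas \ref{S41L} and \ref{S42L}, but there the hypothesis $g(s)-ps\leq M^{\mathsf{bot}}n^{1/2}$ on the bottom boundary is an explicit \emph{input}, supplied in Section \ref{Section5} exactly by Lemma \ref{S51L}; it cannot also be used to prove it.

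The paper's proof avoids the issue entirely by never lower-bounding a non-intersection probability. It fixes $d>b$, a midpoint $M_N$, and decomposes over the last time $m\in\llbracket M_N,D_N\rrbracket$ at which $L_1^N(m)-pm\geq (5R+3)d_N^{1/2}$. Conditioning on this first-hitting-type event and on $L_1^N(A_N)$ not being too low, the interlacing Gibbs property and Lemma \ref{S32L} (``being high sometimes'') show that $L_1^N(M_N)-pM_N\geq Rd_N^{1/2}$ with conditional probability at least $1/3$; since the latter one-point event has probability at most $\varepsilon/12$ by tightness, the union of the $G_m(R)$ has probability at most $\varepsilon/2$. The key structural point is that Lemma \ref{S32L} only requires a \emph{lower} bound on the probability of an \emph{increasing} event, so the monotone coupling of Lemma \ref{MCL} lets one discard the bottom boundary $L_2^N$ altogether (replace $g$ by $-\infty$), and no control of $L_2^N$ is ever needed. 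If you want to salvage a proof in your framework, you should replace the ratio bound by this stopping-domain argument.
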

\begin{proof} The proof we present is similar to \cite[Lemma 4.2]{DEA21}. Let us fix $d \in \Lambda$ with $d > b$, and set $A_N = \lfloor a \cdot d_N \rfloor$, $D_N = \lfloor d \cdot d_N \rfloor$, $M_N = \lfloor (1/2)(a+d) \cdot d_N \rfloor$. We show below that we can find $M^{\mathsf{top}} > 0$, depending on $a,d, \varepsilon$, such that for all large $N$
\begin{equation}\label{S51E2}
\begin{split}
&\mathbb{P}\left( \max_{s \in [A_N,M_N]} \left(L_1^N(s) - p s \right) \geq M^{\mathsf{top}} \cdot d_N^{1/2} \right) < \varepsilon/2 \mbox{, and } \\
&\mathbb{P}\left( \max_{s \in [M_N,D_N]} \left(L_1^N(s) - p s \right) \geq M^{\mathsf{top}} \cdot d_N^{1/2}   \right) < \varepsilon/2.
\end{split}
\end{equation}
Since for all large $N$ we have $d_N \cdot [a,b] \subset [A_N, D_N]$, we see that (\ref{S51E2}) implies (\ref{S51E1}). In the remainder we only establish the second line in (\ref{S51E2}), the first one being handled analogously.\\

Define the events
$$E(R) = \{|L_1^N(A_N) - p A_N| \geq R  d_N^{1/2} \}, \hspace{2mm} F(R) = \{L_1^N(M_N) - pM_N \geq R  d_N^{1/2} \}, \hspace{2mm} G(R) = \sqcup_{ m \in M_N}^{D_N} G_m(R),$$
$$\mbox{ with } G_m(R) = \{ L_1^N(m) - p m \geq (5R+3) d_N^{1/2} \mbox{ and } L_1^N(u) - p u < (5R+3)  d_N^{1/2} \mbox{ for } u \in \llbracket m+1, D_N \rrbracket \}.$$
From (\ref{S5E1}) we can find $R > 0$ large enough so that
\begin{equation}\label{S51E3}
\mathbb{P}(E(R)) < \varepsilon/ 4 \mbox{ and } \mathbb{P}(F(R)) < \varepsilon/12.
\end{equation}
For all large $N$ we have that $\llbracket A_N, D_N \rrbracket \subseteq \llbracket \hat{A}_N, \hat{B}_N \rrbracket$ and by the interlacing Gibbs property, see Lemma \ref{Lem.StrongGP}, we get 
\begin{equation}\label{S51E4}
\mathbb{P}(E(R)^c \cap F(R) \cap G_m(R)) = \mathbb{E} \left[{\bf 1}_{E(R)^c} \cdot {\bf 1}_{G_m(R)} \cdot \mathbb{P}_{\ice, \operatorname{Geom}}^{A_N, m, x, y, \infty, g} \left( Q_1(M_N) - pM_N \geq Rd_N^{1/2} \right) \right],
\end{equation}
where $x = L^N_1(A_N)$, $y = L^N_1(m)$, $g = L^N_2\llbracket A_N ,m \rrbracket$. Setting $\tilde{x} = x - \lfloor pA_N \rfloor$, $\tilde{y} = y - \lfloor pA_N \rfloor$, $\tilde{n} = m - A_N$, $\tilde{s} = M_N - A_N$, $\tilde{g}(s) = g(s+A_N) - \lfloor p A_N \rfloor$ for $s \in \llbracket 0, m - A_N \rrbracket$, we get by translation
\begin{equation*}
\begin{split}
 \mathbb{P}_{\ice, \operatorname{Geom}}^{A_N, m, x, y, \infty, g} \left( Q_1(M_N) - pM_N \geq Rd_N^{1/2} \right) =  \mathbb{P}_{\ice, \operatorname{Geom}}^{0, \tilde{n}, \tilde{x},  \tilde{y}, \infty, \tilde{g}} \left( Q_1(\tilde{s}) - p \tilde{s} + \lfloor p A_N \rfloor - pA_N \geq Rd_N^{1/2} \right).
 \end{split}
\end{equation*}

From Lemma \ref{S32L} applied to $M = (12R+6) \cdot (d-a)^{-1/2}$, $M_1 = \tilde{n}^{-1/2} \left( -R d_N^{1/2} + pA_N - \lfloor p A_N \rfloor \right)$, $M_2 = \tilde{n}^{-1/2} \left( (5R+3) d_N^{1/2} + pA_N - \lfloor p A_N \rfloor \right)$, we conclude that if $x \geq p A_N - Rd_N^{1/2}, y \geq pm + (5R+3)d_N^{1/2}$ (and hence $\tilde{x} \geq M_1 \tilde{n}^{1/2} $, $\tilde{y} \geq p \tilde{n} + M_2 \tilde{n}^{1/2}$) and $N$ is large enough
$$\mathbb{P}_{\ice, \operatorname{Geom}}^{0,\tilde{n}, \tilde{x}, \tilde{y}, \infty, \tilde{g}} \left( Q_1(\tilde{s}) \geq \frac{\tilde{n} - \tilde{s}}{\tilde{n}} \cdot \left(-R d_N^{1/2} \right) + \frac{\tilde{s}}{\tilde{n}} \cdot \left(p \tilde{n} + (5R + 3)d_N^{1/2} \right) - d_N^{1/2} \right) \geq \frac{1}{3}.$$
Combining the last two displayed equations with the fact that $\tilde{s}/\tilde{n} \in [1/3, 1]$ for all large $N$, when $m \in \llbracket M_N, D_N \rrbracket$, we conclude for $x \geq p A_N - Rd_N^{1/2}$ and $y \geq pm + (5R+3)d_N^{1/2}$
\begin{equation*}
\begin{split}
 \mathbb{P}_{\ice, \operatorname{Geom}}^{A_N, m, x, y, \infty, g} \left( Q_1(M_N) - pM_N \geq Rd_N^{1/2} \right) \geq 1/3.
 \end{split}
\end{equation*}
Using the definitions of $E(R)$ and $G_m(R)$ we conclude that for all large $N$ we have a.s.
\begin{equation}\label{S51E5}
{\bf 1}_{E(R)^c} \cdot {\bf 1}_{G_m(R)} \cdot \mathbb{P}_{\ice, \operatorname{Geom}}^{A_N, m, x, y, \infty, g} \left( Q_1(M_N) - pM_N \geq Rd_N^{1/2} \right) \geq  {\bf 1}_{E(R)^c} \cdot {\bf 1}_{G_m(R)} \cdot (1/3).
\end{equation}
Taking expectations on both sides of (\ref{S51E5}), using (\ref{S51E4}), and summing over $m \in \llbracket M_N, D_N\rrbracket$ we get
\begin{equation}\label{S51E6}
\mathbb{P}(E(R)^c \cap F(R) \cap G(R)) \geq (1/3) \cdot \mathbb{P}(E(R)^c \cap G(R)).
\end{equation}
Combining (\ref{S51E3}) with (\ref{S51E6}) we conclude
\begin{equation}\label{S51E7}
\mathbb{P}(G(R)) \leq \mathbb{P}(E(R)) + \mathbb{P}(G(R) \cap E(R)^c) < \varepsilon/4 + 3\mathbb{P}(E(R)^c \cap F(R) \cap G(R)) < \varepsilon/2,
\end{equation}
which implies the second line in (\ref{S51E2}) with $M^{\mathsf{top}} = 5R + 3$.
\end{proof}

%
\subsection{Likely curve separation}\label{Section5.2} The goal of this section is to prove the following lemma.

\begin{lemma}\label{S52L} For any $k \in \llbracket 1, K \rrbracket$, $a, b \in \Lambda$ with $a < b$, and $\varepsilon \in (0,1)$ we can find $W_2 \in \mathbb{N}$, $\delta^{\mathsf{sep}}, \Delta^{\mathsf{sep}} > 0$, depending on $a,b, k$ and $\varepsilon$, such that for $N \geq W_2$, and $ s_0 \in \mathbb{Z} \cap [a \cdot d_N,b \cdot d_N]$ we have 
\begin{equation}\label{S52E1}
\begin{split}
&\mathbb{P}\left( \cap_{m = 1}^k E^{\mathsf{sep}}_m  \right) > 1- \varepsilon, \mbox{ where } E^{\mathsf{sep}}_m = \Big\{ L_m^N(s_0) - ps_0 \geq L_{m+1}^N(s) - ps + \delta^{\mathsf{sep}} \cdot d_N^{1/2} \\
& \mbox{ for all } m \in \llbracket 1, k \rrbracket \mbox{, } s \in \mathbb{Z} \cap [ s_0 - \Delta^{\mathsf{sep}} \cdot d_N,s_0 + \Delta^{\mathsf{sep}} \cdot d_N] \Big\}.
\end{split}
\end{equation}
\end{lemma}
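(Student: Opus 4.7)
The plan is to apply Lemma~\ref{S41L} once for each index $m \in \llbracket 1, k \rrbracket$ to the sub-ensemble consisting of the first $m$ curves of $\mathfrak{L}^N$ on a slightly enlarged time interval, with $L_{m+1}^N$ playing the role of the bottom boundary $g$, and then to conclude by a union bound over $m$.

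First I would fix $a', b' \in \Lambda$ with $a' < a < b < b'$ and set $A_N = \lfloor a' d_N \rfloor$, $B_N = \lceil b' d_N \rceil$, $n_N = B_N - A_N$. For constants $M, M^{\mathsf{top}} > 0$ to be chosen, I would introduce for each $m \in \llbracket 1, k \rrbracket$ the good event
\begin{equation*}
\begin{split}
G_m = &\ \left\{ |L_i^N(A_N) - p A_N| \leq M d_N^{1/2},\ |L_i^N(B_N) - p B_N| \leq M d_N^{1/2}\ \text{for } i \in \llbracket 1, m \rrbracket \right\} \\
& \cap\ \left\{ \max_{s \in \llbracket A_N, B_N \rrbracket} \bigl( L_{m+1}^N(s) - ps \bigr) \leq M^{\mathsf{top}} d_N^{1/2} \right\}.
\end{split}
\end{equation*}
Using Assumption~1 for curves $i \leq k$ at the times $a', b'$, together with the interlacing inequality $L_{m+1}^N \leq L_1^N$ and Lemma~\ref{S51L}, I can choose $M$ and $M^{\mathsf{top}}$ so that $\mathbb{P}(G_m^c) < \varepsilon/(2k)$ uniformly in $m \in \llbracket 1, k \rrbracket$. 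Crucially, $G_m$ is measurable with respect to the $\sigma$-algebra $\mathcal{F}_m$ generated by $\{L_i^N : i \geq m+1\}$ together with $\{L_i^N(A_N), L_i^N(B_N) : i \in \llbracket 1, m \rrbracket\}$, which is what allows the Gibbs conditioning below.

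By the interlacing Gibbs property (Definition~\ref{DefSGP}) applied to the consecutive block $\llbracket 1, m \rrbracket$ on $\llbracket A_N, B_N \rrbracket$, conditional on $\mathcal{F}_m$ the paths $\{L_i^N \llbracket A_N, B_N \rrbracket\}_{i=1}^m$ have law $\mathbb{P}_{\ice,\operatorname{Geom}}^{A_N, B_N, \vec{x}, \vec{y}, \infty, g}$ with $\vec{x}, \vec{y}$ the corresponding boundary values and $g = L_{m+1}^N\llbracket A_N, B_N\rrbracket$. After translating time by $-A_N$ and space by $-\lfloor p A_N \rfloor$, on $G_m$ all the hypotheses of Lemma~\ref{S41L} are satisfied with a side-boundary constant $M^{\mathsf{side}}$ derived from $M$ and the ratio $d_N/n_N$, a bottom-boundary constant $M^{\mathsf{bot}}$ derived from $M^{\mathsf{top}}$ and the same ratio, and $t_0 := (s_0 - A_N)/n_N$ lying in a fixed compact subinterval of $(0,1)$ (since $s_0 \in [a d_N, b d_N]$ and $a' < a < b < b'$), uniformly in $m$ and $s_0$. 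Lemma~\ref{S41L} applied with parameter $\eta = \varepsilon/(2k)$ (and with its $k$ replaced by $m$), and its conclusions rescaled from $n_N^{1/2}$ to $d_N^{1/2}$ by the factor $(b'-a')^{1/2}$, then produces parameters $\delta^{\mathsf{sep}}, \Delta^{\mathsf{sep}} > 0$ (taken as the minimum of the outputs over the finite range $m \in \llbracket 1, k\rrbracket$) so that
\begin{equation*}
\mathbb{P}\bigl((E_m^{\mathsf{sep}})^c \cap G_m\bigr) = \mathbb{E}\bigl[\mathbf{1}_{G_m}\,\mathbb{P}\bigl((E_m^{\mathsf{sep}})^c \mid \mathcal{F}_m\bigr)\bigr] \leq \varepsilon/(2k).
\end{equation*}
A union bound over $m$ then yields $\mathbb{P}\bigl(\bigcup_{m=1}^k (E_m^{\mathsf{sep}})^c\bigr) < \varepsilon$, which is (\ref{S52E1}).

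The main technical obstacle will be the translation and rescaling between the $d_N$-scale (on which $\mathfrak{L}^N$ is formulated) and the $n_N$-scale (on which Lemma~\ref{S41L} is stated), and verifying that the resulting side- and bottom-boundary constants, as well as the window parameter $\delta_2$, can be chosen uniformly in $m \in \llbracket 1, k \rrbracket$ and in $s_0 \in \mathbb{Z} \cap [a d_N, b d_N]$. This is largely bookkeeping once one has correctly set up $G_m$ to be $\mathcal{F}_m$-measurable (which is why the maximum of $L_{m+1}^N$, rather than of $L_1^N$, appears in its definition), so that the Gibbs property may be invoked inside the expectation against $\mathbf{1}_{G_m}$.
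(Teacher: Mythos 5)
Your proposal is correct and follows essentially the same route as the paper's proof: enlarge the time window to $[a'd_N, b'd_N]$, control the side boundary values via one-point tightness and the bottom boundary's maximum via Lemma \ref{S51L} transferred down by interlacing, condition on this externally measurable good event using the interlacing Gibbs property, and invoke Lemma \ref{S41L} with $t_0$ in a fixed compact subinterval of $(0,1)$, finishing with a union bound over $m$. The only differences are cosmetic (an $\varepsilon/(2k)$ versus $\varepsilon/(4k)$ budget split and the exact bookkeeping of the $d_N$-to-$n_N$ rescaling, which the paper handles in the same way).
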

\begin{proof} For clarity we split the proof into two steps. In the first we specify the parameters $W_2, \delta^{\mathsf{sep}}$ and $\Delta^{\mathsf{sep}}$ in the statement of the lemma, and in the second step we show that they satisfy (\ref{S52E1}).\\

{\bf \raggedleft Step 1.} Let $\epsilon > 0$ be small enough so that $b -a > 2\epsilon$, $c = a - 2\epsilon \in \Lambda$, and $d = b + 2 \epsilon \in \Lambda$. We put $C_N = \lfloor c \cdot d_N \rfloor$, $D_N = \lfloor d \cdot d_N \rfloor$, and let $W_{2,1} \in \mathbb{N}$ be sufficiently large so that for $N \geq W_{2,1}$ and $ s_0 \in \mathbb{Z} \cap [a \cdot d_N,b \cdot d_N]$ we have 
\begin{equation}\label{S52E2}
 K_N \geq k+1, \hspace{2mm} \hat{A}_N \leq C_N, \hspace{2mm} \hat{B}_N \geq D_N, \hspace{2mm} \min \left(\frac{s_0 - C_N}{D_N - C_N},  \frac{D_N - s_0}{D_N - C_N} \right) \geq \frac{\epsilon}{d-c} \in (0, 1/3).
\end{equation}
Let us set $\tilde{n} = D_N - C_N$ and note that from (\ref{S5E1}) we can find $W_{2,2} \in \mathbb{N}$ and $M^{\mathsf{side}} > 0$ such that for $N \geq W_{2,2}$ we have
\begin{equation}\label{S52E3}
\begin{split}
&\mathbb{P} \left( E^{\mathsf{side}}\right) \geq 1- \varepsilon/(4k), \mbox{ where } E^{\mathsf{side}} = \left\{ \left|L_i^{N}(C_N) - \lfloor p C_N \rfloor \right| \leq M^{\mathsf{side}} \cdot \tilde{n}^{1/2} \mbox{ for } i \in \llbracket 1, k\rrbracket \right\} \\ 
& \cap \left\{ \left|L_i^{N}(D_N) - \lfloor p C_N \rfloor - p \tilde{n} \right| \leq M^{\mathsf{side}} \cdot \tilde{n}^{1/2} \mbox{ for } i \in \llbracket 1, k\rrbracket \right\}.
\end{split}
\end{equation}
From Lemma \ref{S51L} we can find $W_{2,3} \in \mathbb{N}$ and $M^{\mathsf{top}} > 0$ such that for $N \geq W_{2,3}$ we have
\begin{equation}\label{S52E3.5}
\begin{split}
&\mathbb{P} \left( \max_{u \in [C_N, D_N] } \left( L_1^N(u) - p u + pC_N - \lfloor p C_N \rfloor  \right) \leq M^{\mathsf{top}} \cdot \tilde{n}^{1/2} \right) > 1 - \varepsilon/(4k).
\end{split}
\end{equation}

For $m \in \llbracket 1, k \rrbracket$, we let $N_5(m)$, $\delta^{\mathsf{sep}}(m), \Delta^{\mathsf{sep}}(m)$ be as in Lemma \ref{S41L} applied to $p$ as in the present setup, $\eta = \varepsilon/(4k)$, $M^{\mathsf{bot}}$ and $M^{\mathsf{side}}$ as above, $k = m$ and $\delta_2 = \frac{\epsilon}{d-c}$. In particular, if 
\begin{itemize}
\item $\vec{x}, \vec{y}\in \mathfrak{W}_m$, and $|x_i| \leq M^{\mathsf{side}} \cdot n^{1/2}$, $|y_i - pn| \leq M^{\mathsf{side}} \cdot n^{1/2}$ for $i  \in \llbracket 1, m \rrbracket$;
\item $g: \llbracket 0, n \rrbracket \rightarrow [-\infty, \infty)$ is such that $g(s) \leq ps + M^{\mathsf{bot}} \cdot n^{1/2} $ for all $s \in \llbracket 0, n \rrbracket$;
\item the set $\Omega_{\ice}(0,n,\vec{x},\vec{y}, \infty, g)$ is non-empty;
\item $t_0 \in [\delta_2, 1- \delta_2]$, and $F^m_n(t_0 n ) = \mathbb{Z} \cap [t_0n -\Delta^{\mathsf{sep}}(m) \cdot n, t_0 n +\Delta^{\mathsf{sep}}(m) \cdot n]$,
\end{itemize}
then the following inequality holds for $n \geq N_5(m)$
\begin{equation}\label{S52E4}
\mathbb{P}_{\ice, \operatorname{Geom}}^{0, n, \vec{x}, \vec{y}, \infty, g}\left( Q_m(t_0n) - pt_0n \geq g(s) - ps + \delta^{\mathsf{sep}}(m) \cdot n^{1/2}  \mbox{ for all } s \in F^m_n (t_0 n)\right) > 1- \epsilon/(4k).
\end{equation}

We now let $W_2 \in \mathbb{N}$ to be large enough so that $W_2 \geq \max(W_{2,1}, W_{2,2}, W_{2,3})$ and for $N \geq W_2$ we have $\tilde{n} \geq \max_{m \in \llbracket 1, k \rrbracket} N_5(m)$. We also let $\delta^{\mathsf{sep}}, \Delta^{\mathsf{sep}} > 0 $ be small enough so that for $N \geq W_2$
\begin{equation}\label{S52E5}
\delta^{\mathsf{sep}} \cdot d_N^{1/2} \leq \min_{m \in \llbracket 1, k \rrbracket} \delta^{\mathsf{sep}}(m) \cdot \tilde{n}^{1/2} \mbox{ and } \Delta^{\mathsf{sep}} \cdot d_N \leq \min_{m \in \llbracket 1, k \rrbracket} \Delta^{\mathsf{sep}}(m) \cdot \tilde{n}.    
\end{equation}
The above specifies our choice of $W_2, \delta^{\mathsf{sep}}$ and $\Delta^{\mathsf{sep}}$ in the statement of the lemma.\\

{\bf \raggedleft Step 2.} In this step we fix $s_0 \in \mathbb{Z} \cap  [a \cdot d_N,b \cdot d_N]$, $N \geq W_2$ and proceed to prove (\ref{S52E1}). It suffices to show that for a fixed $m \in \llbracket 1, k \rrbracket$ we have
\begin{equation}\label{S52E6}
\mathbb{P}\left(  E^{\mathsf{sep}}_m  \right) > 1- \varepsilon/k.
\end{equation}
If we define
\begin{equation*}
\begin{split}
\tilde{E}^{\mathsf{sep}}_m = \left\{ L^N_m(s_0) - p s_0 \geq L_{m+1}^N(s) - p s + \delta^{\mathsf{sep}}(m) \cdot\tilde{n}^{1/2} \mbox{ for } s \in [s_0 - \Delta^{\mathsf{sep}}(m)  \tilde{n}, s_0 + \Delta^{\mathsf{sep}}(m) \tilde{n}]  \right\},
\end{split}
\end{equation*}
then in view of (\ref{S52E5}) we have $\tilde{E}^{\mathsf{sep}}_m \subseteq E^{\mathsf{sep}}_m$ and so it suffices to show 
\begin{equation}\label{S52E7}
\mathbb{P}\left(  \tilde{E}^{\mathsf{sep}}_m  \right) > 1- \varepsilon/k.
\end{equation}

Let us define
$$E_m^{\mathsf{top}} = \left\{\max_{u \in [C_N, D_N] } \left(L_m^N(u) - p u + pC_N - \lfloor p C_N \rfloor  \right) \leq M^{\mathsf{top}} \cdot \tilde{n}^{1/2} \right\},$$
and note that by the interlacing Gibbs property we have $L_i^N(s) \geq L_{i+1}^N(s)$ for all $i \in \llbracket 1, k \rrbracket$ and $s \in \llbracket C_N, D_N\rrbracket$, and so $E_1^{\mathsf{top}} \subseteq E_m^{\mathsf{top}}$. In view of (\ref{S52E3.5}) conclude for $N \geq W_2$ 
\begin{equation}\label{S52E8}
\begin{split}
&\mathbb{P} \left( E_m^{\mathsf{top}}  \right) > 1 - \varepsilon/(4k).
\end{split}
\end{equation}

Let us set 
\begin{equation}\label{S52E9}
\begin{split}
&\vec{x} = \left(L^N_1(C_N) - \lfloor p C_N \rfloor, \dots, L^N_m(C_N) - \lfloor p C_N \rfloor \right),  \\
&\vec{y} = \left(L^N_1(D_N) - \lfloor p C_N \rfloor , \dots, L^N_m(D_N) - \lfloor p C_N \rfloor\right), \\
&g(s) = L^{N}_{m+1}(s + C_N) - \lfloor p C_N \rfloor \mbox{ for } s \in \llbracket 0, \tilde{n} \rrbracket
\end{split}
\end{equation}

By the interlacing Gibbs property (see Lemma \ref{Lem.StrongGP}) and translation we have for $N \geq W_2$ that
\begin{equation}\label{S52E10}
\begin{split}
&\mathbb{P} \left(E^{\mathsf{side}} \cap E^{\mathsf{top}}_{m+1} \cap \tilde{E}^{\mathsf{sep}}\right) = \mathbb{E} \Big[ {\bf 1}_{E^{\mathsf{side}}} \cdot {\bf 1}_{E^{\mathsf{top}}_{m+1}}   \\
& \times \mathbb{P}_{\ice, \operatorname{Geom}}^{0, \tilde{n}, \vec{x}, \vec{y}, \infty, g} \left( Q_m(s_1 ) - ps_1 \geq g(s) - ps + \delta^{\mathsf{sep}} (m) \cdot \tilde{n}^{1/2} \mbox{ for all } s \in F_{\tilde{n}}^m(s_1)  \right)   \Big],
\end{split}
\end{equation}
where $s_1 = s_0 - C_N$, and we recall that $\tilde{n} = D_N - C_N$ and $F_n^m$ is as above (\ref{S52E4}). We now observe that when $N \geq W_2$, and hence (\ref{S52E2}) holds, we have  
\begin{equation*}
s_1 \geq \frac{\epsilon}{d-c} \cdot (D_N - C_N) = \delta_2 \cdot \tilde{n} \mbox{ and } \tilde{n} - s_1 \geq \frac{\epsilon}{d-c} \cdot (D_N - C_N) = \delta_2 \cdot \tilde{n},
\end{equation*}
and so 
\begin{equation}\label{S52E11}
s_1/\tilde{n} \in [\delta_2, 1- \delta_2].
\end{equation}
From the definition of $E^{\mathsf{top}}_{m+1}$ and $E^{\mathsf{side}}$ we see that if $\vec{x}, \vec{y}, g$ are as in (\ref{S52E9}), then they a.s. satisfy the conditions above (\ref{S52E4}) with $n = \tilde{n}$, and so from (\ref{S52E4}) with $t_0 = s_1/\tilde{n}$ (which by (\ref{S52E11}) is in $[\delta_2, 1- \delta_2]$), we conclude
\begin{equation*}
\begin{split}
&{\bf 1}_{E^{\mathsf{side}}} \cdot {\bf 1}_{E^{\mathsf{top}}_{m+1}} \cdot \mathbb{P}_{\ice, \operatorname{Geom}}^{0, \tilde{n}, \vec{x}, \vec{y}, \infty, g} \left( Q_m(s_1 ) - ps_1 \geq g(s) - ps + \delta^{\mathsf{sep}} (m) \cdot \tilde{n}^{1/2} \mbox{ for all } s \in F_{\tilde{n}}^m(s_1)  \right) \\
&\geq{\bf 1}_{E^{\mathsf{side}}} \cdot {\bf 1}_{E^{\mathsf{top}}_{m+1}} \cdot (1 - \varepsilon/(4k)).
\end{split}
\end{equation*}
Taking expectations on both sides of the last equation and using (\ref{S52E10}) we conclude
$$\mathbb{P} \left(E^{\mathsf{side}} \cap E^{\mathsf{top}}_{m+1} \cap \tilde{E}^{\mathsf{sep}}\right) \geq (1 - \varepsilon/(4k)) \cdot \mathbb{P} (E^{\mathsf{side}} \cap E^{\mathsf{top}}_{m+1} ). $$
Combining the latter with the inequalities in (\ref{S52E3}) and (\ref{S52E8}) we get
$$\mathbb{P}\left(  \tilde{E}^{\mathsf{sep}}_m  \right)  \geq \mathbb{P} \left(E^{\mathsf{side}} \cap E^{\mathsf{top}}_{m+1} \cap \tilde{E}^{\mathsf{sep}}\right) \geq (1 - \varepsilon/(4k)) \cdot (1 - \varepsilon/(2k)) \geq 1- \varepsilon/k, $$
which proves (\ref{S52E7}) and hence the lemma.
\end{proof}

%
\subsection{Tightness of ensembles}\label{Section5.3} The goal of this section is to prove that $\mathcal{L}^N$ in Theorem \ref{S5T} is a tight sequence in $C(\llbracket 1,K \rrbracket \times \Lambda)$. In view of \cite[Lemma 2.4]{DEA21} it suffices to show that for each $[c,d] \subset \Lambda$ and $k \in \llbracket 1, K \rrbracket$
\begin{equation}\label{S53E1}
\begin{split}
&\lim_{a \rightarrow \infty} \limsup_{N \rightarrow \infty} \mathbb{P}\left(|\mathcal{L}_k^N(d_N^{-1}\lfloor c d_N \rfloor)| \geq a \right) = 0, \mbox{ and for each } \epsilon > 0,\\
& \lim_{\delta \rightarrow 0} \limsup_{N \rightarrow \infty} \mathbb{P} \left( \sup_{x,y \in [d_N^{-1} C_N, d_N^{-1} D_N],  |x - y| \leq \delta} \left| \mathcal{L}^N_k(x) - \mathcal{L}^N_k(y) \right| \geq \epsilon \right) = 0,
\end{split}
\end{equation}
where we have set
\begin{equation}\label{S53E2}
 C_N = \lfloor c \cdot d_N \rfloor, \hspace{2mm} D_N = \lfloor d \cdot d_N \rfloor, \mbox{ and } \tilde{n} = D_N - C_N.
\end{equation}
The first line in (\ref{S53E1}) follows by the one-point tightness assumption in the statement of the theorem. To show the second line in (\ref{S53E1}) it suffices to show that for any $\epsilon, \varepsilon > 0$, there exist $W_3 \in \mathbb{N}$ and $\delta > 0$, such that for $N \geq W_3$, we have
\begin{equation}\label{S53E3}
\begin{split}
& \mathbb{P} \left(w_N(L^N_k, \delta) > \epsilon \right) < \varepsilon, \mbox{ where } \\
&w_N(L^N_k, \delta) = \sup_{x,y \in [C_N, D_N],  |x - y| \leq \delta \tilde{n}} \left| \sigma^{-1} \tilde{n}^{-1/2} (L^N_k(x) - px) -  \sigma^{-1} \tilde{n}^{-1/2} (L^N_k(y) - py) \right| 
\end{split}
\end{equation}
We next proceed to specify the values of $W_3$ and $\delta$.\\

From Lemma \ref{S51L} we can find $W_{3,1}$ and $M^{\mathsf{top}}$ such that for $N \geq W_{3,1}$
\begin{equation}\label{S53E4}
\begin{split}
&\mathbb{P} \left( \max_{u \in [C_N, D_N] } \left( L_1^N(u) - p u + pC_N - \lfloor p C_N \rfloor  \right) \leq M^{\mathsf{top}} \cdot \tilde{n}^{1/2} \right) > 1 - \varepsilon/4.
\end{split}
\end{equation}
In addition, from (\ref{S5E1}) we can find $W_{3,2} \in \mathbb{N}$ and $M^{\mathsf{side}} >0$ such that for $N \geq W_{3,2}$
\begin{equation}\label{S53E5}
\begin{split}
&\mathbb{P} \left( E^{\mathsf{side}}\right) \geq 1- \varepsilon/4, \mbox{ where } E^{\mathsf{side}} = \left\{ \left|L_i^{N}(C_N) - \lfloor p C_N \rfloor \right| \leq M^{\mathsf{side}} \cdot \tilde{n}^{1/2} \mbox{ for } i \in \llbracket 1, k\rrbracket \right\} \\ 
& \cap \left\{ \left|L_i^{N}(D_N) - \lfloor p C_N \rfloor - p \tilde{n} \right| \leq M^{\mathsf{side}} \cdot \tilde{n}^{1/2} \mbox{ for } i \in \llbracket 1, k\rrbracket \right\}.
\end{split}
\end{equation}

Let $[\hat{c}, \hat{d}] \subset \Lambda$ be such that $[c,d] \subset (\hat{c}, \hat{d})$ and note that for all large $N$ we have $C_N, D_N \in [\hat{c} \cdot d_N, \hat{d} \cdot d_N]$. From Lemma \ref{S52L} applied to $a = \hat{c}$, $b = \hat{d}$, $s_0 \in \{C_N, D_N \}$ we can find $W_{3,3} \in \mathbb{N}$, $\delta^{\mathsf{sep}} > 0$, $\Delta^{\mathsf{sep}} \in (0,1/2)$ such that for $N \geq W_{3,3}$ we have  
\begin{equation}\label{S53E6}
\begin{split}
&\mathbb{P}\left(  E^{\mathsf{sep}}_m  \right) > 1- \varepsilon/4, \mbox{ where } E^{\mathsf{sep}} = \Big\{ L_m^N(s_0) - ps_0 \geq L_{m+1}^N(s) - ps + \delta^{\mathsf{sep}} \cdot \tilde{n}^{1/2} \\
& \mbox{ for all } m \in \llbracket 1, k \rrbracket \mbox{, } s \in \mathbb{Z} \cap [ s_0 - \Delta^{\mathsf{sep}} \cdot \tilde{n},s_0 + \Delta^{\mathsf{sep}} \cdot \tilde{n}] \mbox{ and }s_0 \in \{C_N, D_N \}  \Big\}.
\end{split}
\end{equation}

We let $N_6, \delta$ be as in Lemma \ref{S42L} for our present choice of $p, k, \epsilon, M^{\mathsf{side}}$, $M^{\mathsf{bot}} = M^{\mathsf{top}}$ as above, $A^{\mathsf{sep}} = A^{\mathsf{gap}} = \delta^{\mathsf{sep}}$, $\Delta^{\mathsf{gap}} = \Delta^{\mathsf{sep}}$ and $\eta = \varepsilon/4$. This specifies our choice of $\delta$. We also let $W_3$ be large enough so that $W_3 \geq \max(W_{3,1}, W_{3,2}, W_{3,3})$ and for $N \geq W_{3}$ we have $[C_N, D_N] \subseteq [\hat{A}_N, \hat{B}_N]$, $K_N \geq k+1$, and $\tilde{n} \geq N_6$. This specifies our choice of $W_3$. \\

We now proceed to prove (\ref{S53E3}). For $N \geq W_3$ we have $[C_N, D_N] \subseteq [\hat{A}_N, \hat{B}_N]$ and by interlacing we have $L^N_i(s) \geq L^{N}_{i+1}(s)$ for all $s \in [C_N, D_N]$ and $i \in \llbracket 1, k \rrbracket$. The latter and (\ref{S53E4}) implies that
\begin{equation}\label{S53E7}
\begin{split}
&\mathbb{P} \left( E^{\mathsf{top}}  \right) > 1 - \frac{\varepsilon}{4} \mbox{, where }E^{\mathsf{top}} = \left\{\max_{u \in [C_N, D_N] } \left(L_{k+1}^N(u) - p u + pC_N - \lfloor p C_N \rfloor  \right) \leq M^{\mathsf{top}}  \tilde{n}^{1/2} \hspace{-0.5mm} \right\} \hspace{-0.5mm}  .
\end{split}
\end{equation}
Let us set 
\begin{equation}\label{S53E8}
\begin{split}
&\vec{x} = \left(L^N_1(C_N) - \lfloor p C_N \rfloor, \dots, L^N_k(C_N) - \lfloor p C_N \rfloor \right),  \\
&\vec{y} = \left(L^N_1(D_N) - \lfloor p C_N \rfloor , \dots, L^N_k(D_N) - \lfloor p C_N \rfloor\right), \\
&g(s) = L^{N}_{k+1}(s + C_N) - \lfloor p C_N \rfloor \mbox{ for } s \in \llbracket 0, \tilde{n} \rrbracket.
\end{split}
\end{equation}
By the interlacing Gibbs property (see Lemma \ref{Lem.StrongGP}) and translation we have for $N \geq W_3$ that
\begin{equation}\label{S53E9}
\begin{split}
&\mathbb{P} \left(E^{\mathsf{side}} \cap E^{\mathsf{top}} \cap \{w_N(L^N_k, \delta) > \epsilon \} \right) = \mathbb{E} \left[ {\bf 1}_{E^{\mathsf{side}}} \cdot {\bf 1}_{E^{\mathsf{top}}}  \cdot \mathbb{P}_{\ice, \operatorname{Geom}}^{0, \tilde{n}, \vec{x}, \vec{y}, \infty, g} \left( w(\mathcal{Q}_k, \delta) > \epsilon  \right)   \right],
\end{split}
\end{equation}
where $w(f,\delta)$ is the modulus of continuity on $[0,1]$ from (\ref{S33E1}). We observe that on the event $E^{\mathsf{side}} \cap E^{\mathsf{top}}$ the $\vec{x}, \vec{y}, g$ from (\ref{S53E8}) almost surely satisfy the conditions of Lemma \ref{S42L} with parameter specialization as detailed below (\ref{S53E6}). We conclude from the lemma that
\begin{equation*}
\begin{split}
& {\bf 1}_{E^{\mathsf{side}}} \cdot {\bf 1}_{E^{\mathsf{top}}}  \cdot \mathbb{P}_{\ice, \operatorname{Geom}}^{0, \tilde{n}, \vec{x}, \vec{y}, \infty, g} \left( w(\mathcal{Q}_k, \delta) > \epsilon  \right) \leq {\bf 1}_{E^{\mathsf{side}}} \cdot {\bf 1}_{E^{\mathsf{top}}} \cdot \varepsilon/4.
\end{split}
\end{equation*}
Taking expectations on both sides of the last equation and using (\ref{S53E9}) gives
$$\mathbb{P} \left(E^{\mathsf{side}} \cap E^{\mathsf{top}} \cap \{w_N(L^N_k, \delta) > \epsilon \} \right) \leq \mathbb{P} \left(E^{\mathsf{side}} \cap E^{\mathsf{top}} \right) \cdot (\varepsilon/4).$$
Using the last equation, (\ref{S53E5}) and (\ref{S53E7}) gives
$$\mathbb{P}(w_N(L^N_k, \delta) > \epsilon) \leq \mathbb{P} \left(E^{\mathsf{side}} \cap E^{\mathsf{top}} \cap \{w_N(L^N_k, \delta) > \epsilon \} \right) + \varepsilon/2 \leq 3\varepsilon/4, $$
which proves (\ref{S53E3}) and hence the tightness part in Theorem \ref{S5T}.

%
\subsection{Brownian Gibbs property}\label{Section5.4} In this section we complete the proof of Theorem \ref{S5T} by showing that any subsequential limit of $\mathcal{L}^N$ satisfies the partial Brownian Gibbs property from \cite[Definition 2.7]{DM21}. We assume that $\mathcal{L}^{\infty}$ is any subsequential limit and by possibly passing to a subsequence, which we continue to call $\mathcal{L}^N$, we assume that $\mathcal{L}^N \Rightarrow \mathcal{L}^{\infty}$. By Skorohod's Representation Theorem, see \cite[Theorem 6.7]{Bill}, we may assume that the sequence $\{\mathcal{L}^N\}_{N \geq 1}$ and $\mathcal{L}^{\infty}$ are all defined on the same probability space $(\Omega, \mathcal{F}, \mathbb{P})$ and $\lim_N \mathcal{L}^N(\omega) = \mathcal{L}^{\infty}(\omega)$ for each $\omega \in \Omega$. We mention that in applying \cite[Theorem 6.7]{Bill} we implicitly used that $C(\llbracket 1, K \rrbracket \times \Lambda)$ is a Polish space, cf. Remark \ref{RemPolish}. In view of Lemma \ref{S52L} we have that for each fixed $t \in \Lambda$ we have a.s.
\begin{equation}\label{S54E1}
\mathcal{L}^{\infty}_i(t) > \mathcal{L}^{\infty}_{i+1}(t) \mbox{ for all } i \in \llbracket 1, K -1 \rrbracket.
\end{equation}

Let us recall the conditions we need to verify in \cite[Definition 2.7]{DM21}. When $K = 1$ there is nothing to check, and so we assume that $K \geq 2$. We seek to show that $\mathcal{L}^{\infty}$ is $\mathbb{P}$-a.s. non-intersecting, i.e.
\begin{equation}\label{S54E2}
\mathbb{P}\left(\mathcal{L}^{\infty}_i(t) >\mathcal{L}^{\infty}_{i+1}(t) \mbox{ for all }  t\in \Lambda , i \in \llbracket 1, K -1 \rrbracket \right) = 1,
\end{equation}
and that for any $[a,b] \subset \Lambda$, finite $S = \llbracket s_1, s_2 \rrbracket \subseteq \llbracket 1, K-1 \rrbracket$, bounded Borel-measurable $F: C(S \times [a,b]) \rightarrow \mathbb{R}$ and $E \in \mathcal{F}_{\operatorname{ext}}(S \times (a,b))$ we have
\begin{equation}\label{S54E3}
\mathbb{E}\left[ F\left( \mathcal{L}^{\infty} \vert_{S \times [a,b]} \right) \cdot {\bf 1}_E \right] = \mathbb{E}\left[ \mathbb{E}_{\operatorname{avoid}}^{a,b,\vec{x},\vec{y},f,g} \left[ F(\mathcal{Q}) \right]\cdot {\bf 1}_E \right].  
\end{equation}
In (\ref{S54E3}) we have that $\mathcal{L}^{\infty} \vert_{S \times [a,b]}$ is the restriction of $\mathcal{L}^{\infty}$ to $S \times [a,b]$, $\vec{x} = (\mathcal{L}^{\infty}_{s_1} (a), \dots, \mathcal{L}^{\infty}_{s_2} (a))$, $\vec{y} = (\mathcal{L}^{\infty}_{s_1} (b), \dots, \mathcal{L}^{\infty}_{s_2} (b))$, $g = \mathcal{L}_{s_2+1}[a,b]$, $f = \mathcal{L}_{s_1-1}^{\infty}[a,b]$ for $s_1 \geq 2$, and $f = \infty$ if $s_1 = 1$. Also,
$$\mathcal{F}_{\operatorname{ext}}(S \times (a,b)) = \sigma \left\{ \mathcal{L}^{\infty}_i(s): (i,s) \in \llbracket 1, K \rrbracket \times \Lambda \setminus S \times (a,b) \right\},$$
and $\mathcal{Q}$ has distribution $\mathbb{P}_{\operatorname{avoid}}^{a,b,\vec{x},\vec{y},f,g}$ as in Definition \ref{AvoidBB} with curves indexed by $S$ rather than $\llbracket 1, \dots, s_2 - s_1 + 1\rrbracket$. Note that $\mathbb{E}_{\operatorname{avoid}}^{a,b,\vec{x},\vec{y},f,g} \left[ F(\mathcal{Q}) \right]$ is well-defined, since by (\ref{S54E1}) we have $\mathbb{P}$-a.s. 
$$f(a) > \mathcal{L}^{\infty}_{s_1} (a) > \cdots > \mathcal{L}^{\infty}_{s_2} (a) > g(a) \mbox{ and } f(b) > \mathcal{L}^{\infty}_{s_1} (b) > \cdots > \mathcal{L}^{\infty}_{s_2} (b) > g(b).$$
We also mention that $\mathbb{E}_{\operatorname{avoid}}^{a,b,\vec{x},\vec{y},f,g} \left[ F(\mathcal{Q}) \right]$ defines a bounded measurable random variable on $(\Omega, \mathcal{F}, \mathbb{P})$ in view of \cite[Lemma 3.4]{DM21}. The latter implies that both sides of (\ref{S54E3}) are well-defined and finite, see also \cite[Remark 2.6]{DM21}.

Our arguments for establishing (\ref{S54E2}) and (\ref{S54E3}) are similar to those used in the proof of \cite[Theorem 2.26(ii)]{DEA21}, so we will be brief. For clarity, we split the remainder of the proof into two steps.\\

{\bf \raggedleft Step 1.} In this step we deduce (\ref{S54E2}) from (\ref{S54E3}). Let us fix $k \in \llbracket 1, K-1\rrbracket$, $[a,b] \subset \Lambda$, a bounded measurable $F_1: C(\llbracket 1, k -1 \rrbracket \times [a,b]) \rightarrow \mathbb{R}$, and let $\mathcal{H}_2$ be the collection of bounded measurable $F_2: C([a,b]) \rightarrow \mathbb{R}$ such that
\begin{equation}\label{S54E4}
\mathbb{E}\left[ F_1\left( \mathcal{L}^{\infty} \vert_{\llbracket 1, k-1 \rrbracket \times [a,b]} \right) \cdot F_2( \mathcal{L}_k[a,b]) \right] = \mathbb{E}\left[ \mathbb{E}_{\operatorname{avoid}}^{a,b,\vec{x},\vec{y},\infty,g} \left[ F_1(\mathcal{Q}) \right]\cdot F_2( \mathcal{L}_k[a,b]) \right].  
\end{equation}
In view of (\ref{S54E3}), we have that (\ref{S54E4}) holds when $F_2(h) = {\bf 1}\{h \in B\}$ for any Borel $B \subseteq C([a,b])$. By linearity and the bounded convergence theorem, we see that $\mathcal{H}_2$ is closed under linear combinations and bounded monotone limits, which by the monotone class theorem, see \cite[Theorem 5.2.2.]{Durrett} implies $\mathcal{H}_2$ contains all bounded measurable functions.

We next let $\mathcal{H}$ denote the class of bounded measurable $F: C(\llbracket 1, k \rrbracket \times [a,b]) \rightarrow \mathbb{R}$ such that
\begin{equation}\label{S54E5}
\mathbb{E}\left[ F\left( \mathcal{L}^{\infty} \vert_{\llbracket 1, k \rrbracket \times [a,b]} \right) \right] = \mathbb{E}\left[ \mathbb{E}_{\operatorname{avoid}}^{a,b,\vec{x},\vec{y},\infty,g} \left[ F(\tilde{\mathcal{Q}}) \right]\right],  
\end{equation}
where $\tilde{\mathcal{Q}}_i = \mathcal{Q}_i$ for $i \in \llbracket 1, k-1 \rrbracket$, $\tilde{\mathcal{Q}}_k = \mathcal{L}^{\infty}_k[a,b]$, and  $\{\mathcal{Q}_i\}_{i \in \llbracket 1, k-1\rrbracket}$ has law $\mathbb{P}_{\operatorname{avoid}}^{a,b,\vec{x},\vec{y},\infty,g} $. From (\ref{S54E4}), which we now know holds for any bounded measurable $F_1, F_2$, we see that $\mathcal{H}$ contains all functions
$$F(f_1, \dots, f_k) = \prod_{i = 1}^{k} {\bf 1}\{f_i \in B_i\}\mbox{ where } B_i \subseteq C([a,b]) \mbox{ are Borel for }i \in \llbracket 1, k \rrbracket.$$
As before, we conclude by the monotone class theorem that $\mathcal{H}$ contains all bounded measurable functions. Setting 
$$F(f_1, \dots, f_k) = {\bf 1}\{f_1(s) > f_2(s) > \cdots > f_k(s) \mbox{ for all } s \in [a,b]\}$$
in (\ref{S54E5}) and using that by Definition \ref{AvoidBB} we have $\mathbb{E}_{\operatorname{avoid}}^{a,b,\vec{x},\vec{y},\infty,g} \left[ F(\tilde{\mathcal{Q}}) \right] = 1$, we conclude that
\begin{equation}\label{S54E6}
\mathbb{P}\left(\mathcal{L}^{\infty}_i(t) >\mathcal{L}^{\infty}_{i+1}(t) \mbox{ for all }  t\in [a,b] , i \in \llbracket 1, k-1 \rrbracket \right) = 1.
\end{equation}
Taking a countable sequence of intervals $[a,b]$ that exhausts $\Lambda$ and an increasing sequence of $k$'s converging to $K$, and taking intersections of the events in (\ref{S54E6}) gives (\ref{S54E2}).\\

{\bf \raggedleft Step 2.} In this step we prove (\ref{S54E3}). Fix $m \in \mathbb{N}$, $n_1, \dots, n_m \in \llbracket 1 , K \rrbracket$, $t_1, \dots, t_m \in \Lambda$ and bounded continuous $h_1, \dots, h_m : \mathbb{R} \rightarrow \mathbb{R}$. Define $R = \{i \in \llbracket 1, m \rrbracket: n_i \in S, t_i \in [a,b]\}$. We first show that 
\begin{equation}\label{S54E7}
\mathbb{E}\left[ \prod_{i = 1}^m h_i(\mathcal{L}^{\infty}_{n_i}(t_i)) \right] = \mathbb{E}\left[ \prod_{i \not \in R} h_i(\mathcal{L}^{\infty}_{n_i}(t_i))  \cdot \mathbb{E}_{\operatorname{avoid}}^{a,b,\vec{x},\vec{y},f,g} \left[ \prod_{i  \in R} h_i(\mathcal{Q}_{n_i}(t_i))   \right] \right], 
\end{equation}
where we recall from the beginning of the section that in $\mathbb{P}_{\operatorname{avoid}}^{a,b,\vec{x},\vec{y},f,g}$ the curves are indexed by $S = \llbracket s_1, s_2\rrbracket$ as opposed to $\llbracket 1, s_2 - s_1 + 1\rrbracket$. From the a.s. convergence of $\mathcal{L}^N$ to $\mathcal{L}^{\infty}$ we get
\begin{equation}\label{S54E8}
\lim_{N \rightarrow \infty} h_i(\mathcal{L}^{N}_{n_i}(t_i)) = h_i(\mathcal{L}^{\infty}_{n_i}(t_i))
\end{equation}
$\mathbb{P}$-a.s. for each $i \in \llbracket 1, m \rrbracket$, and so by the bounded convergence theorem
\begin{equation}\label{S54E9}
\lim_{N \rightarrow \infty} \mathbb{E}\left[ \prod_{i = 1}^m h_i(\mathcal{L}^{N}_{n_i}(t_i)) \right] = \mathbb{E}\left[ \prod_{i = 1}^m h_i(\mathcal{L}^{\infty}_{n_i}(t_i)) \right].
\end{equation}
In addition, if we set $A_N = \lfloor a \cdot d_N\rfloor$, $B_N = \lceil b \cdot d_N \rfloor$, $\vec{X}^N = (L_{s_1}^N(A_N), \dots, L_{s_2}^N(A_N))$, $\vec{Y}^N = (L_{s_1}^N(B_N), \dots, L_{s_2}^N(B_N))$, $G_N(t) = L_{s_2+1}^N(t)$ for $t \in [A_N,B_N]$ and 
$$F_N(t) = L^N_{s_1 -1}(t) \mbox{ for $t \in [A_N, B_N]$ if $s_1 \geq 2$, or }F_N(t) = \infty \mbox{ if } s_1 = 1, $$
then from the a.s. convergence of $\mathcal{L}^N$ to $\mathcal{L}^{\infty}$ and (\ref{S54E1}), we know that $\mathbb{P}$-a.s. the sequences $A_N, B_N, d_N, \vec{X}^N, \vec{Y}^N, F_N, G_N$ satisfy the conditions of Lemma \ref{lem:RW} and so we conclude that $\mathbb{P}$-a.s.
\begin{equation}\label{S54E10}
\lim_{N \rightarrow \infty} \mathbb{E}^{A_N, B_N,\vec{X}^N,\vec{Y}^N,F_N,G_N}_{\ice, \operatorname{Geom}} \left[ \prod_{i  \in R} h_i(\mathcal{Q}^N_{n_i}(t_i))   \right] = \mathbb{E}_{\operatorname{avoid}}^{a,b,\vec{x},\vec{y},f,g} \left[ \prod_{i  \in R} h_i(\mathcal{Q}_{n_i}(t_i))   \right],
\end{equation}
where on the left side of (\ref{S54E10}) we have for $i \in \llbracket s_1, s_2 \rrbracket$
$$ \mathcal{Q}^N_{i}(t) = \sigma^{-1} d_N^{-1/2} \cdot (Q^N_{i -s_1 + 1}(td_N) - td_N p),$$
with $\mathfrak{Q}^N = \{Q_{i}^N\}_{i = 1}^{s_2 - s_1 +1 }$ having law $\mathbb{P}^{A_N, B_N,\vec{X}^N,\vec{Y}^N,F_N,G_N}_{\ice, \operatorname{Geom}}$. Combining (\ref{S54E8}) with (\ref{S54E10}) and the bounded convergence theorem gives
\begin{equation}\label{S54E11}
\begin{split}
&\lim_{N \rightarrow \infty} \mathbb{E}\left[ \prod_{i \not \in R} h_i(\mathcal{L}^{N}_{n_i}(t_i))  \cdot \mathbb{E}^{A_N, B_N,\vec{X}^N,\vec{Y}^N,F_N,G_N}_{\ice, \operatorname{Geom}} \left[ \prod_{i  \in R} h_i(\mathcal{Q}^N_{n_i}(t_i))   \right] \right]  \\
&= \mathbb{E}\left[ \prod_{i \not \in R} h_i(\mathcal{L}^{\infty}_{n_i}(t_i))  \cdot \mathbb{E}_{\operatorname{avoid}}^{a,b,\vec{x},\vec{y},f,g} \left[ \prod_{i  \in R} h_i(\mathcal{Q}_{n_i}(t_i))   \right] \right].
\end{split}
\end{equation}
Finally, if $N$ is large enough so that $[A_N, B_N] \subseteq [\hat{A}_N, \hat{B}_N]$ we have by the interlacing Gibbs property, see Lemma \ref{Lem.StrongGP}, that the terms on the first line in (\ref{S54E11}) agree with those on the left in (\ref{S54E9}), and so the limits agree, which is precisely (\ref{S54E7}).\\

Starting from (\ref{S54E7}) deducing (\ref{S54E3}) is a standard monotone class argument. If we set
$$h_i(x) =  \begin{cases} 1 &\mbox{ if } x < r_i \\ 1 - n(x - r_i) &\mbox{ if } x \in [r_i, r_i+1/n] \\ 0 &\mbox{ if } x > r_i + 1/n \end{cases},$$
in equation (\ref{S54E7}) and let $n \rightarrow \infty$ (using the bounded convergence theorem), we conclude that (\ref{S54E3}) holds when $E, F$ are of the form 
\begin{equation}\label{S54E12}
E = \cap_{i = 1}^u \{ \mathcal{L}^{\infty}_{n_i}(t_i) \leq r_i\} \mbox{ and } F(f_{s_1}, \dots, f_{s_2}) = \prod_{j = 1}^v {\bf 1}\{ f_{m_j}(s_j) \leq r_j'\},
\end{equation}
where $(n_i, t_i) \not \in S \times (a,b)$ and $(m_j,s_j) \in S \times [a,b]$. By the monotone class theorem for each fixed $E$ as in (\ref{S54E12}) we can extend the equality in (\ref{S54E3}) to any bounded measurable $F: C(S \times [a,b]) \rightarrow \mathbb{R}$. Finally, if we fix any bounded measurable $F: C(S \times [a,b]) \rightarrow \mathbb{R}$ in (\ref{S54E3}), then having the equality for $E$ as in (\ref{S54E12}) allows us to conclude by the $\pi-\lambda$ theorem that the equality holds for any $E \in \mathcal{F}_{\operatorname{ext}}(S \times (a,b))$.

%
\section{Applications}\label{Section6} In this section we apply Theorem \ref{S5T} to sequences of line ensembles that arise in Schur processes with spiked parameters, and show that they converge uniformly over compact sets to the Airy wanderer line ensembles constructed recently in \cite{ED24a}. In Section \ref{Section6.1} we introduce Schur processes and explain how we scale their parameters. In Section \ref{Section6.2} we show that their corresponding line ensembles converge weakly.

%
\subsection{Schur processes}\label{Section6.1} Our exposition in this section follows closely \cite[Section 3]{ED24a}, which in turn is based on \cite[Chapter I]{Mac} and \cite{BR05}.

A {\em partition} is a sequence $\lambda = (\lambda_1, \lambda_2, \dots)$ of non-negative integers such that $\lambda_1 \geq \lambda_2 \geq \cdots$ and all but finitely many terms are zero. We define the {\em weight} of a partition by $|\lambda| = \sum_{i \geq 1} \lambda_i$. We say that two partitions $\lambda, \mu$ {\em interlace}, denoted by $\lambda \succeq \mu$ or $\mu \preceq \lambda$, if $\lambda_1 \geq \mu_1 \geq \lambda_2 \geq \mu_2 \geq \cdots$. Given two partitions $\lambda, \mu$, we define the {\em skew Schur polynomial} in finitely many variables $x_1, \dots, x_n$ by
\begin{equation}\label{S61E1}
s_{\lambda/ \mu}(x_1, \dots, x_n) =  \sum_{\mu = \lambda^{0} \preceq  \lambda^{1} \preceq \cdots \preceq \lambda^{n} = \lambda} \prod_{i = 1}^n x_i^{|\lambda^{i}| - |\lambda^{i-1}|},
\end{equation}
with the convention that $s_{\lambda/ \mu}(x_1, \dots, x_n) = 0$ if the above sum is empty. If $\mu = (0,0,\dots)$ we drop it from the notation and write $s_{\lambda}(x_1, \dots,x_n)$ -- these are called {\em Schur polynomials}.

If $M, N \in \mathbb{N}$ and $\vec{X} = (x_1, \dots, x_M)$, $\vec{Y} = (y_1, \dots, y_N)$ with $x_i, y_j \geq 0$ and $x_i y_j \in [0,1)$, we define the {\em ascending Schur process} to be the probability measure $\mathbb{P}_{\vec{X}, \vec{Y}}$ on sequences of partitions $(\lambda^1, \dots, \lambda^M)$ such that 
\begin{equation}\label{S61E2}
\mathbb{P}_{\vec{X}, \vec{Y}}(\lambda^1 = \mu^1, \dots, \lambda^M = \mu^M) = \prod_{i = 1}^M \prod_{j = 1}^N (1 - x_i y_j) \cdot \prod_{i = 1}^M s_{\mu^i/ \mu^{i-1}}(x_i) \cdot s_{\mu^M}(y_1, \dots, y_M),
\end{equation}
where $\mu^0 = (0,0, \dots)$. See \cite[Section 3]{ED24a} for an explanation of why $\mathbb{P}_{\vec{X}, \vec{Y}}$ is a probability measure.

From \cite[(3.6)]{ED24a}, we have for $A, B \in \llbracket 1, M\rrbracket$ with $A \leq B$ 
\begin{equation}\label{S61E3}
\begin{split}
&\mathbb{P}_{\vec{X}, \vec{Y}}\left(\cap_{i \in \llbracket A,B \rrbracket} \{\lambda^{i} = \mu^i \} \right) = \prod_{i = A+1}^B s_{\mu^i/ \mu^{i-1}}(x_i) \\
&\times \prod_{i = 1}^B \prod_{j = 1}^N (1 - x_i y_j) \cdot s_{\mu^A}(x_1, \dots, x_A) \cdot  s_{\mu^B}(y_1, \dots, y_M). 
\end{split}
\end{equation}
In particular, if $x_i = q$ for $i \in \llbracket A + 1,B \rrbracket$, we have from (\ref{S61E1}) and (\ref{S61E3}) 
\begin{equation*}
\begin{split}
&\mathbb{P}_{\vec{X}, \vec{Y}}\left(\cap_{i \in \llbracket A,B \rrbracket} \{\lambda^{i} = \mu^i \} \right) = {\bf 1}\{ \mu^A \preceq \mu^{A+1} \preceq \cdots \preceq \mu^{B}\} \cdot q^{|\mu_B| - |\mu_A|} \\
&\times \prod_{i = 1}^B \prod_{j = 1}^N (1 - x_i y_j) \cdot s_{\mu^A}(x_1, \dots, x_A) \cdot  s_{\mu^B}(y_1, \dots, y_M).
\end{split}
\end{equation*}
The last equation implies that when $\mathbb{P}_{\vec{X}, \vec{Y}}(\lambda^A = \mu^A, \lambda^B = \mu^B) > 0$, we have
\begin{equation}\label{S6Gibbs}
\begin{split}
\mathbb{P}_{\vec{X}, \vec{Y}}\left(\cap_{i \in \llbracket A,B \rrbracket} \{\lambda^{i} = \mu^i \} \vert \lambda^A = \mu^A, \lambda^{B} = \mu^B \right) = {\bf 1}\{ \mu^A \preceq \mu^{A+1} \preceq \cdots \preceq \mu^{B}\}.
\end{split}
\end{equation}
From (\ref{S6Gibbs}), we deduce that for each $m \geq 1$, the law of the $\llbracket 1, m \rrbracket$-indexed geometric line ensemble $\{\lambda_i^j: j \in \llbracket A, B\rrbracket, i \in \llbracket 1, m \rrbracket \}$ is a convex combination of measures of the form $\mathbb{P}_{\ice, \operatorname{Geom}}^{A,B, \vec{x}, \vec{y}, \infty, g}$ as in Definition \ref{DefSGP}, with different $\vec{x}, \vec{y}, g$. By Lemma \ref{Lem.FinEnsSatGB}, it follows that $\{\lambda_i^j: j \in \llbracket A, B\rrbracket, i \in \mathbb{N} \}$ is an $\mathbb{N}$-indexed geometric line ensemble that satisfies the interlacing Gibbs property of Definition \ref{DefSGP}.\\

We end this section by explaining how we scale the parameters $(x_1, \dots, x_N)$ and $(y_1, \dots, y_M)$ in the ascending Schur process. We mention that the scaling below is a special case of the scaling in \cite[Definition 4.1]{ED24a} corresponding to $J_a^- = J_b^- = c^- = 0$.
\begin{definition}\label{ParScale} We assume that $c^+$, $\{a_i^+\}_{ i \geq 1}$, $\{b_i^+\}_{ i \geq 1}$ are non-negative reals such that
\begin{equation}\label{ParProp}
\sum_{i = 1}^{\infty} (a_i^+ + b_i^+ ) < \infty \mbox{ and } a_{i}^{+} \geq a_{i+1}^{+},  b_{i}^{+} \geq b_{i+1}^{+} \mbox{ for all } i \geq 1.
\end{equation}
We let $J_a^{+} = \inf \{ k \geq 1: a_{k}^{+} = 0\} - 1$ and $J_b^{-} = \inf \{ k \geq 1: b_{k}^{-} = 0\} - 1$. In words, $J_a^{+}$ is the largest index $k$ such that $a_{k}^{+} > 0$, with the convention that $J_a^{+} = 0$ if all $a_k^{+} = 0$ and $J_a^{+} = \infty$ if all $a_k^{+} > 0$, and analogously for $J_b^{+}$. We fix $q \in (0,1)$ and set
\begin{equation}\label{SigmaQ}
\sigma_q = \frac{q^{1/3} (1 + q)^{1/3}}{1- q} \mbox{ and } f_q = \frac{q^{1/3}}{2 (1 + q)^{2/3}}.
\end{equation}
For $N \in \mathbb{N}$ increasing to infinity, we consider three numbers $A_N, B_N, C_N^{+}$ and sequences $\{x^N_i \}_{ i\geq 1}$ and $\{y^N_i\}_{i \geq 1}$ such that
\begin{equation}\label{ABSeq}
\begin{split}
&x_i^N =1 - \frac{1}{N^{1/3} b_i^+ \sigma_q } \mbox{ for $i = 1, \dots, B_N$, and } y_i^N =1 -\frac{1}{N^{1/3} a_i^+ \sigma_q } \mbox{ for $i = 1, \dots, A_N$},
\end{split}
\end{equation}
where $B_N \leq \min\left(\lfloor N^{1/12} \rfloor, J_b^+ \right)$ is the largest integer such that $x^N_{B_N} \geq q$, and $A_N \leq \min\left(\lfloor N^{1/12} \rfloor, J_a^+ \right)$ is the largest integer such that $y^N_{A_N} \geq q$. Here, we use the convention $x_0^N = y_0^N = 1$ so that $A_N =0$ and $B_N = 0$ are possible. We also have
\begin{equation}\label{CSeq}
\begin{split}
x_{B_N + i }^N = y_{A_N + i}^N =1 - \frac{2}{N^{1/4}c^+ \sigma_q} \mbox{ for } i = 1 ,\dots, C_N^+ \mbox{, where } C^+_N =  \begin{cases} 0 &\mbox{ if $c^+ = 0$, } \\  \lfloor N^{1/12} \rfloor  &\mbox{ if } c^+ > 0 \end{cases}, 
\end{split}
\end{equation}
 and 
\begin{equation}\label{RemSeq}
\begin{split}
\mbox{ $x_i^N = q$ for $i > B_N + C_N^+ $ and $y_i^N = q$ for $i > A_N + C_N^+ $.}
\end{split}
\end{equation}
We let $N_0 \in \mathbb{N}$ be sufficiently large (depending on $q, c^+$) so that $x_i^N, y_i^N \geq q$ for all $i \in \mathbb{N}$, provided that $N \geq N_0$. Note that if $N \geq N_0$ and $M \geq 1$ we can define the ascending Schur process in (\ref{S61E2}) with parameters $N, M, \{x_i^N\}_{i = 1}^M$ and $\{y_i^N\}_{ i = 1}^N$ as above, since $x^N_i, y_i^N \in [q, 1)$ for all $i \in \mathbb{N}$.
\end{definition}

%
\subsection{Convergence to the Airy wanderer line ensembles}\label{Section6.2} In this section we state and prove the main result about the asymptotics of the measures $\mathbb{P}_{\vec{X}, \vec{Y}}$ from (\ref{S61E2}) when the parameters are scaled as in Definition \ref{ParScale} -- this is Proposition \ref{S62P}. In the sequel we assume the same notation as in Definition \ref{ParScale}.\\

We set $p = \frac{q}{1-q}$, $\sigma = \sqrt{p (1+ p)}$. We also assume that $N_1 \in \mathbb{N}$ is large enough so that for $N \geq N_1$ we have $N \geq N_0$ and $N \geq A_N + C_N^+ + N^{3/4} + 1$. Finally, we assume $M_N \geq N + N^{3/4} + 1$ and let $\mathbb{P}_N$ be the measure $\mathbb{P}_{\vec{X}, \vec{Y}}$ from (\ref{S61E2}) with $M = M_N$, $x_i = x_i^N$ for $i \in \llbracket 1, M_N \rrbracket$, $y_i = y_i^N$ for $i \in \llbracket 1, N \rrbracket$. For the sake of specificity, if $N < N_0$ we let $\mathbb{P}_N$ be the measure in (\ref{S61E2}) with $M = M_N = N = 1$ and $x_1 = y_1 = 0$. If $\{\lambda^j_i: j \in \llbracket 1, M_N \rrbracket, i \in \mathbb{N}\}$ has distribution $\mathbb{P}_N$, we define the sequence of $\mathbb{N}$-indexed geometric line ensembles $\mathfrak{L}^N = \{L_i^N\}_{i \geq 1}$ on $\mathbb{Z}$ via 
\begin{equation}\label{S62E1}
L_i^N(s) = \begin{cases} \lambda_i^{N + s}  &\mbox{ if } s \in \llbracket - N +1, M_N - N \rrbracket  \\ \lambda_i^{1}  &\mbox{ if } s \leq - N \\ \lambda_i^{M_N} &\mbox{ if } s > M_N - N.
\end{cases}
\end{equation}
In words, $\mathfrak{L}^N$ is just a translation of $\{\lambda^j_i: j \in \llbracket 1, M_N \rrbracket, i \in \mathbb{N}\}$ with a constant extension outside of the integer interval $\llbracket 1, M_N \rrbracket$. We finally define $\mathcal{L}^N = \{\mathcal{L}_i^N\}_{i \geq 1}$ via
\begin{equation}\label{S62E2}
\mathcal{L}_i^N(s) = \sigma^{-1} N^{-1/3} \cdot \left( L_i^N(s N^{2/3}) - p s N^{2/3}  - 2p N\right).
\end{equation}

\begin{proposition}\label{S62P} As $N \rightarrow \infty$, the sequence $\mathcal{L}^N$ converges weakly to some $\mathbb{N}$-indexed line ensemble $\mathcal{L}^{\infty}$ on $\mathbb{R}$, which satisfies the partial Brownian Gibbs property from \cite[Definition 2.7]{DM21}. Moreover, if $\mathcal{L}^{a,b,c}$ is as in \cite[Theorem 1.10]{ED24a} for $\{a_i^+\}_{i \geq 1}, \{b_i^+\}_{i \geq 1}$ and $c^+$ as in the present statement, and $J_a^- = J_b^- = c^- = 0$, then
\begin{equation}\label{S62PE1}
\mathcal{L}^{a,b,c} = \{f_q^{1/2} \cdot \mathcal{L}_i^{\infty}(f_q^{-1}t): i \in \mathbb{N}, t \in \mathbb{R}\},
\end{equation}
where the equality is in distribution as random elements in $C(\mathbb{N} \times \mathbb{R})$.
\end{proposition}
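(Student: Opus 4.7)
The plan is to apply Theorem \ref{S1T1} to $\mathfrak{L}^N$ and then identify the unique subsequential limit via the finite-dimensional convergence from \cite[Theorem 1.8]{ED24a}. With $p = q/(1-q)$, $\sigma = \sqrt{p(1+p)}$, $d_N = N^{2/3}$ and $C_N = 2pN$, the scaled ensemble $\mathcal{L}^N_i$ from \eqref{S62E2} is exactly the one considered in Theorem \ref{S1T1}. I would choose $\hat{A}_N = B_N + C_N^+ - N$ and $\hat{B}_N = M_N - N$; the bounds $B_N, C_N^+ \leq N^{1/12}$ and $M_N \geq N + N^{3/4} + 1$ from Definition \ref{ParScale} then guarantee that $d_N^{-1}\hat{A}_N \to -\infty$ and $d_N^{-1}\hat{B}_N \to +\infty$, as required.

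Next, I would verify the two assumptions of Theorem \ref{S1T1}. For Assumption 2, note that for $s \in \llbracket \hat{A}_N + 1, \hat{B}_N \rrbracket$ the corresponding Schur index $j = N + s$ satisfies $j \geq B_N + C_N^+ + 1$, so by \eqref{RemSeq} we have $x_j^N = q$; equation \eqref{S6Gibbs} then shows that the conditional law of $(\mathfrak{L}^N(\hat{A}_N + 1), \ldots, \mathfrak{L}^N(\hat{B}_N - 1))$ given the endpoints is uniform on interlacing sequences, which is precisely the interlacing Gibbs property. For Assumption 1, one-point tightness of each $\mathcal{L}^N_i(t)$ follows from the one-point marginal convergence established in \cite[Theorem 1.8]{ED24a}.

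With both assumptions verified, Theorem \ref{S1T1} yields that $\mathcal{L}^N$ is tight in $C(\mathbb{N} \times \mathbb{R})$ and every weak subsequential limit satisfies the partial Brownian Gibbs property. To upgrade tightness to weak convergence, I would invoke \cite[Theorem 1.8]{ED24a} at an arbitrary finite collection of space-time points $\{(i_k, t_k)\}_{k = 1}^m$: that result identifies the limit in distribution of $(\mathcal{L}^N_{i_1}(t_1), \ldots, \mathcal{L}^N_{i_m}(t_m))$ with the corresponding joint marginal of $\{f_q^{-1/2} \mathcal{L}^{a,b,c}_{i_k}(f_q t_k)\}_{k=1}^m$. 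Since finite-dimensional projections form a separating class on $C(\mathbb{N} \times \mathbb{R})$, all subsequential limits have the same law, so $\mathcal{L}^N$ converges weakly to some $\mathcal{L}^\infty$, and the scaling comparison gives \eqref{S62PE1}.

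The principal technical point I expect to be the main obstacle lies in the scaling bookkeeping of the last step. The normalization in \eqref{S62E2} is by $\sigma^{-1} N^{-1/3}$, centered at $ptN^{2/3} + 2pN$, while $\mathcal{L}^{a,b,c}$ of \cite[Theorem 1.10]{ED24a} is defined via the $f_q$-weighted edge scaling intrinsic to \cite{ED24a}. Carefully unpacking the two normalizations and comparing the relevant characteristic constants from the Taylor expansion near $q$ should produce precisely the spatial factor $f_q^{1/2}$ and time rescaling $t \mapsto f_q^{-1} t$ of \eqref{S62PE1}; everything else is a direct invocation of Theorem \ref{S1T1} and \cite[Theorem 1.8]{ED24a}.
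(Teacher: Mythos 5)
Your proposal follows essentially the same route as the paper: verify Assumptions 1 and 2 (one-point tightness via the finite-dimensional convergence from \cite[Theorem 1.8]{ED24a}, the interlacing Gibbs property via (\ref{S6Gibbs}) on a window of indices where all $x$-parameters equal $q$), apply Theorem \ref{S1T1}, and pin down the unique subsequential limit through its finite-dimensional distributions, with the factor $f_q^{1/2}$ arising from the identity $\sigma_q/\sigma = (2f_q)^{-1/2}$ that you correctly flag as the remaining bookkeeping. The only cosmetic difference is the choice of window: the paper takes $\hat{A}_N = -\lfloor N^{3/4}\rfloor$, $\hat{B}_N = \lfloor N^{3/4}\rfloor$ rather than your maximal one (which, in the degenerate case $B_N + C_N^+ = 0$, should be shifted by one to stay inside $\llbracket -N+1, M_N - N\rrbracket$), and both choices work.
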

\begin{proof} We fix $t_1,\dots, t_m \in \mathbb{R}$ with $t_1 < \cdots < t_m$ and set $M_j(N) = N + \lfloor t_j N^{2/3} \rfloor$ for $j\in \llbracket 1, m \rrbracket$. If
$$X_i^{j,N} = \sigma_q^{-1} N^{-1/3} \cdot \left( \lambda_i^{M_j(N)} - \frac{2q \tilde{N} }{1-q} - \frac{q t_j N^{2/3}}{1-q} - i\right) \mbox{ for $i \in \mathbb{N}$ and $j \in \llbracket 1, m \rrbracket$},$$
where $\tilde{N} = N - A_N - C_N^+ = N + O(N^{1/12})$, then we have from the proof of \cite[Theorem 1.8]{ED24a} that $(X_i^{j,N}: i \in \mathbb{N}, j \in \llbracket 1, m \rrbracket)$ converges weakly (as random elements in $(\mathbb{R}^{\infty}, \mathcal{R}^{\infty})$) to $(Y_i(f_q t_j) - f_q^2t_j^2: i \in \mathbb{N}, j \in \llbracket 1, m \rrbracket)$, where $\{Y_i\}_{i \geq 1}$ is as in the statement of that theorem. Notice that for all large $N$, we have $M_j(N) \in \llbracket 1, M_N \rrbracket$ and so we have 
\begin{equation}\label{S62E3}
L_i^N(\lfloor t_j N^{2/3} \rfloor) - p t_jN^{2/3} - 2pN = \sigma_q N^{1/3} \cdot X_i^{j,N} + O(N^{1/12}),
\end{equation}
where the constant in the big $O$ notation depends on $q$ and $i$. In particular, we conclude that for each $t \in \mathbb{R}$ and $i \in \mathbb{N}$ the random variables $\sigma^{-1} N^{-1/3}  \left( L_i^N(\lfloor t N^{2/3} \rfloor) - ptN^{2/3} - 2p N\right)$ are tight, which satisfies Assumption 1 in Section \ref{Section1.2} with $d_N = N^{2/3}$, $p = \frac{q}{1-q}$, and $C_N = 2pN$.

In addition, we have for $\hat{A}_N = -\lfloor N^{3/4} \rfloor$ and $\hat{B}_N = \lfloor N^{3/4} \rfloor$ that $N + \hat{A}_N \geq A_N + C_N^+ + 1$, if $N \geq N_1$, and so $x_r^N = q$ for all $r \in \llbracket N + \hat{A}_N, N + \hat{B}_N \rrbracket$. By using the latter with equation (\ref{S6Gibbs}), we see that Assumption 2 in Section \ref{Section1.2} is satisfied when $N \geq N_1$. For concreteness, we set $\hat{A}_N = \hat{B}_N = 0$ for $N < N_1$ and then the assumption is satisfied for all $N$. 

The observations in the last two paragraphs show that $\mathfrak{L}^N$ satisfies the conditions of Theorem \ref{S1T1}. We conclude that $\mathcal{L}^N$ is tight and any subsequential limit satisfies the partial Brownian Gibbs property. If $\mathcal{L}^{\infty}$ is any subsequential limit, then from (\ref{S62E3}), the weak convergence of $(X_i^{j,N}: i \in \mathbb{N}, j \in \llbracket 1, m \rrbracket)$, and \cite[Theorem 3.1]{Bill}, we conclude that $\{\mathcal{L}_i^{\infty}(t): i \in \mathbb{N}, t \in \mathbb{R}\}$ has the same finite-dimensional distribution as $\{(\sigma_q/\sigma) Y_i(f_q t) - (\sigma_q/\sigma) f_q^2 t^2: i \in \mathbb{N}, t \in \mathbb{R}\}$. Since finite-dimensional sets form a separating class, see \cite[Lemma 3.1]{DM21}, we conclude that $\mathcal{L}^{\infty}$ is the unique subsequential limit of $\mathcal{L}^N$, and so the latter converge to it. 

We turn to the last part of the proposition. Using the definitions of $\sigma, \sigma_q$ we get $\sigma_q/\sigma = (2f_q)^{-1/2}$, and so from the previous paragraph we see that $\{f_q^{1/2} \cdot \mathcal{L}_i^{\infty}(f_q^{-1}t): i \in \mathbb{N}, t \in \mathbb{R}\}$ has the same finite-dimensional distribution as $\{2^{-1/2} \cdot Y_i(t) - 2^{-1/2} t^2: i \in \mathbb{N}, t \in \mathbb{R}\}$. From \cite[(1.12)]{ED24a} the same is true for $\{\mathcal{L}_i^{a,b,c}(t):  i \in \mathbb{N}, t \in \mathbb{R}\}$, which implies the equality in (\ref{S62PE1}).
\end{proof}

\begin{remark} The ensembles $\mathcal{L}^{a,b,c}$ in Proposition \ref{S62P} are called Airy wanderer line ensembles. In \cite[Theorem 1.10]{ED24a} we constructed $\mathcal{L}^{a,b,c}$ by taking limits of such ensembles when $J_a^+ = J_b^+ = m \in \mathbb{Z}_{\geq 0}$, $c^+ = 0$, which were previously constructed in \cite[Proposition 3.12]{CorHamA} using results from \cite{AFM10}. By translating the latter, one obtains all ensembles in \cite[Theorem 1.10]{ED24a}. If we combine \cite[Theorem 1.8]{ED24a} and Proposition \ref{S62P}, we obtain an alternative construction of $\mathcal{L}^{a,b,c}$ directly as a weak limit of Schur processes with spiked parameters, which is independent of the results in \cite{AFM10, CorHamA}.
\end{remark}

%
\begin{appendix}

\section{Proofs of results from Section \ref{Section2}} \label{AppendixA} In this section we prove Lemmas \ref{Lem.FinEnsSatGB}, \ref{Lem.StrongGP}, \ref{MCL} and \ref{lem:RW}. We continue with the notation from Section \ref{Section2}.

%
\subsection{Proof of Lemma \ref{MCL}} \label{AppendixA1} Our proof uses similar ideas to \cite[Lemmas 3.1 and 3.2]{DEA21}, which in turn go back to \cite{CorHamA}. For clarity we split the proof into three steps. In the first step we construct a {\em maximal} element $\mathfrak{Q}^{\mathsf{max}}$ in $\Omega_{\ice}(T_0, T_1, \vec{x}, \vec{y}, \infty,g)$, provided the latter is non-empty, and show that each element $\mathfrak{Q} \in \Omega_{\ice}(T_0, T_1, \vec{x}, \vec{y}, \infty,g)$ can be obtained from $\mathfrak{Q}^{\mathsf{max}}$ after finitely many elementary moves. In the second step we construct two Markov chains $\{X^b_n: n \geq 0\}, \{X^t_n: n \geq 0\}$ on $\Omega_{\ice}(T_0, T_1, \vec{x}\,^b, \vec{y}\,^b, \infty,g^b)$ and $\Omega_{\ice}(T_0, T_1, \vec{x}\,^t, \vec{y}\,^t, \infty,g^t)$, respectively, such that for each time $n \in \mathbb{Z}_{\geq 0}$ we have that $X^t_n(i,s) \geq X^b_n(i,s)$ for $i \in \llbracket 1, k \rrbracket$ and $s \in \llbracket T_0, T_1 \rrbracket$. Furthermore, we show that $X_n^t$ converge weakly to $\mathfrak{Q}^t$ and $X_n^b$ converge weakly to $\mathfrak{Q}^b$ as in the statement of the lemma. In the third step we use the Skorohod representation theorem to construct a coupling for $\mathfrak{Q}^t, \mathfrak{Q}^b$ as in the statement of the theorem. We now turn to the details.\\

{\bf \raggedleft Step 1.} Suppose that $T_0, T_1 \in \mathbb{Z}$, $T_0 < T_1$, $\vec{x}, \vec{y} \in \mathfrak{W}_k$, and $g: \llbracket T_0, T_1 \rrbracket \rightarrow [-\infty, \infty)$ are such that $\Omega_{\ice}(T_0, T_1, \vec{x}, \vec{y}, \infty,g) \neq \emptyset$. Our first task in this step is to construct $\mathfrak{Q}^{\mathsf{max}} = \{Q_i^{\mathsf{max}}\}_{i = 1}^k \in \Omega_{\ice}(T_0, T_1, \vec{x}, \vec{y}, \infty,g)$ such that for each $\mathfrak{Q} = \{Q_i\}_{i = 1}^k \in \Omega_{\ice}(T_0, T_1, \vec{x}, \vec{y}, \infty,g) $ we have
\begin{equation}\label{A1E1}
Q_i^{\mathsf{max}}(s) \geq Q_i(s) \mbox{ for all } i \in \llbracket 1, k \rrbracket \mbox{, and } s \in \llbracket T_0, T_1\rrbracket.
\end{equation}
In the sequel we fix $\mathfrak{Q} \in \Omega_{\ice}(T_0, T_1, \vec{x}, \vec{y}, \infty,g) $ (note that the latter set is non-empty).

We define for $s \in \llbracket 0, T_1 - T_0 \rrbracket$ and $i \in \llbracket 1, k \rrbracket$
\begin{equation}\label{A1E2}
Q_i^{\mathsf{max}}(T_0 + s) = \min(x_{i-s}, y_i), \mbox{ where } x_m = \infty \mbox{ for } m \leq 0.
\end{equation}
Note that since $\mathfrak{Q} = \{Q_i\}_{i = 1}^k \in \Omega_{\ice}(T_0, T_1, \vec{x}, \vec{y}, \infty,g)$, we have for $s \in \llbracket 0, T_1 - T_0 \rrbracket$, $i \in \llbracket 1, k \rrbracket$
$$ x_{i-s} \geq Q_{i-s}(T_0) \geq \cdots \geq Q_{i-1}(T_0 + s-1) \geq Q_i(T_0 + s) \mbox{ and } y_i \geq Q_i(T_0 +s),$$
where we have set $Q_m(a) = \infty$ if $m \leq 0$. The latter and (\ref{A1E2}) show that (\ref{A1E1}) holds. Thus, we only need to check that $\mathfrak{Q}^{\mathsf{max}} = \{Q^{\mathsf{max}}_i\}_{i = 1}^k \in \Omega_{\ice}(T_0, T_1, \vec{x}, \vec{y}, \infty,g) $.

Observe that since $x_1 \geq x_2 \geq \cdots \geq x_k$, we have 
\begin{equation}\label{A1E3}
\begin{split}
&Q_i^{\mathsf{max}}(s) \in \mathbb{Z} \mbox{ for all } s \in \llbracket T_0, T_1\rrbracket \mbox{, } i \in \llbracket 1, k \rrbracket \mbox{, and } \\
&Q_i^{\mathsf{max}}(s+1) \geq Q_i^{\mathsf{max}}(s)\mbox{ for all } s \in \llbracket T_0, T_1 -1\rrbracket \mbox{, } i \in \llbracket 1, k \rrbracket.
\end{split}
\end{equation}
In addition, we have $x_i = Q_i(T_0) \leq Q_i(T_1) = y_i$, and from (\ref{A1E1}) and (\ref{A1E2}) that $y_i = Q_i(T_1) \leq Q_i^{\mathsf{max}}(T_1) \leq y_i$, which gives 
\begin{equation}\label{A1E4}
Q^{\mathsf{max}}_i(T_0) = x_i \mbox{ and } Q^{\mathsf{max}}_i(T_1) = y_i \mbox{ for } i \in \llbracket 1, k \rrbracket.
\end{equation}
We also have from (\ref{A1E1}) and $\mathfrak{Q} \in \Omega_{\ice}(T_0, T_1, \vec{x}, \vec{y}, \infty,g) $ that for $s \in \llbracket T_0 + 1, T_1\rrbracket$
\begin{equation}\label{A1E5}
Q^{\mathsf{max}}_k(s-1) \geq Q_k(s-1) \geq g(s).
\end{equation}
We finally note that since $y_1 \geq \cdots \geq y_k$, we have from (\ref{A1E2}) for $i \in \llbracket 1, k-1 \rrbracket$ and $s \in \llbracket 1, T_1 - T_0 \rrbracket$
\begin{equation}\label{A1E6}
Q^{\mathsf{max}}_i(T_0 + s-1) = \min(x_{i- s + 1}, y_i) \geq \min(x_{i -s + 1}, y_{i+1}) = Q^{\mathsf{max}}_{i+1}(T_0 + s).
\end{equation}
Combining (\ref{A1E3}), (\ref{A1E4}), (\ref{A1E5}) and (\ref{A1E6}), we see that $\mathfrak{Q}^{\mathsf{max}} = \{Q^{\mathsf{max}}_i\}_{i = 1}^k \in \Omega_{\ice}(T_0, T_1, \vec{x}, \vec{y}, \infty,g)$.\\

In the remainder of this step we construct a sequence $\mathfrak{Q}^{m} \in \Omega_{\ice}(T_0, T_1, \vec{x}, \vec{y}, \infty,g)$ for $m = 0, \dots, A$ where $A = (T_1 - T_0 + 1) \cdot k$ such that $\mathfrak{Q}^0 = \mathfrak{Q}$, $\mathfrak{Q}^{A} = \mathfrak{Q}^{\mathsf{max}}$, and for each $m \in \llbracket 0, A-1 \rrbracket$
\begin{equation}\label{A1E7}
\mathfrak{Q}^m_i(s) \neq \mathfrak{Q}^{m+1}_i(s) \mbox{ for at most one } (i,s) \in \llbracket 1, k \rrbracket \times \llbracket T_0 , T_1 \rrbracket.
\end{equation}
The latter statement will be used in the next step to prove that certain Markov chains are irreducible. 

Let us define the bijection $h: \llbracket 1, A \rrbracket \rightarrow \llbracket 1, k \rrbracket \times \llbracket T_0, T_1 \rrbracket$ via $h(m) = (a, T_1 - r)$, where $m = a \cdot (T_1 - T_0 + 1) + r$ and $0 \leq r \leq T_1 - T_0 $. In words, as $m$ goes from $1$ to $A$, $h(m)$ traverses $\llbracket 1, k \rrbracket \times \llbracket T_0, T_1 \rrbracket$ from top to bottom and right to left, see Figure \ref{S7_1}. 

\begin{figure}[h]
	\begin{center}
		\begin{tikzpicture}[scale=0.7]
		\begin{scope}
        \def\r{0.1}
		\draw[-, gray] (0,0) grid (5,3);

        \draw (4.5, 2.5 ) node{$1$};
        \draw (3.5, 2.5 ) node{$2$};
        \draw (2.5, 2.5 ) node{$3$};
        \draw (1.5, 2.5 ) node{$4$};
        \draw (0.5, 2.5 ) node{$5$};
        
        \draw (4.5, 1.5 ) node{$6$};
        \draw (3.5, 1.5 ) node{$7$};
        \draw (2.5, 1.5 ) node{$8$};
        \draw (1.5, 1.5 ) node{$9$};
        \draw (0.5, 1.5 ) node{$10$};

        \draw (4.5, 0.5 ) node{$11$};
        \draw (3.5, 0.5 ) node{$12$};
        \draw (2.5, 0.5 ) node{$13$};
        \draw (1.5, 0.5 ) node{$14$};
        \draw (0.5, 0.5 ) node{$15$};

        \draw (-0.5, 0.5 ) node{$3$};
        \draw (-0.5, 1.5 ) node{$2$};
        \draw (-0.5, 2.5 ) node{$1$};
        \draw (0.5, -0.5 ) node{$T_0$};
        \draw (4.5, -0.5 ) node{$T_1$};

		\end{scope}

		\end{tikzpicture}
	\end{center}
 \vspace{-4mm}
	\caption{The figure depicts the bijection $h: \llbracket 1, A \rrbracket \rightarrow \llbracket 1, k \rrbracket \times \llbracket T_0, T_1 \rrbracket$ when $T_1 - T_0 = 4$, $k = 3$ and $A = 15$.}
	\label{S7_1}
\end{figure}

We set $\mathfrak{Q}^0 = \mathfrak{Q}$, and for $m \in \llbracket 1, A\rrbracket$, $(i,s) \in \llbracket 1, k \rrbracket \times \llbracket T_0, T_1 \rrbracket$ we set
\begin{equation}\label{A1E8}
Q_i^{m}(s) = Q_i^m(s) \mbox{ if } h^{-1}(i,s) > m \mbox{ and } Q_{i}^{m}(s) = Q_{i}^{\mathsf{max}}(s) \mbox{ if } h^{-1}(i,s) \leq m,
\end{equation}
and then denote $\mathfrak{Q}^m = \{Q_i^m\}_{i = 1}^k$. The last equation implies (\ref{A1E7}), and also $\mathfrak{Q}^0 = \mathfrak{Q}$, while $\mathfrak{Q}^A = \mathfrak{Q}^{\mathsf{max}}$. Consequently, we only need to check that $\mathfrak{Q}^m \in \Omega_{\ice}(T_0, T_1, \vec{x}, \vec{y}, \infty,g)$. The latter is clear when $m = 0$. Assuming that $\mathfrak{Q}^m \in \Omega_{\ice}(T_0, T_1, \vec{x}, \vec{y}, \infty,g)$, and $m \leq A-1$, we next check that $\mathfrak{Q}^{m+1} \in \Omega_{\ice}(T_0, T_1, \vec{x}, \vec{y}, \infty,g)$.

Since $\mathfrak{Q}^m \in \Omega_{\ice}(T_0, T_1, \vec{x}, \vec{y}, \infty,g)$, and $Q_i^m(s) = Q_i^{m+1}(s)$ when $h(m+1) \neq (i,s)$, we see that we only need to check the monotonicity and interlacing inequalities at $(i,s) = h(m + 1)$. I.e. we need to verify that 
\begin{equation}\label{A1E9}
\begin{split}
&Q_i^{m+1}(s) \leq Q_i^{m+1}(s+1) \mbox{ if } s \in \llbracket T_0, T_1-1\rrbracket, \hspace{2mm} Q_i^{m+1}(s-1) \leq Q_i^{m+1}(s) \mbox{ if } s \in \llbracket T_0+1, T_1\rrbracket, \\
&Q_{i+1}^{m+1}(s+1) \leq Q_i^{m+1}(s) \mbox{ if } s \in \llbracket T_0, T_1-1\rrbracket, \hspace{2mm} Q_i^{m+1}(s) \leq Q_{i-1}^{m+1}(s-1) \mbox{ if } s \in \llbracket T_0+1, T_1\rrbracket, 
\end{split}
\end{equation}
where the convention is that $Q_0^{m+1}(a) = \infty$ and $Q_{k+1}^m(a) = g(a)$. The first inequality on the first line in (\ref{A1E9}) follows from
$$Q_i^{m+1}(s) = Q_{i}^{\mathsf{max}}(s) \leq Q_{i}^{\mathsf{max}}(s+1) = Q_i^{m+1}(s+1),$$
and the second on the first line in (\ref{A1E9}) from
$$Q_i^{m+1}(s) = Q_{i}^{\mathsf{max}}(s) \geq Q_{i}^{\mathsf{max}}(s-1) \geq Q_{i}(s-1) = Q_i^{m+1}(s-1),$$
where the second inequality used (\ref{A1E1}). One similarly has that the first inequality on the second line in (\ref{A1E9}) follows from
$$Q_{i+1}^{m+1}(s+1) = Q_{i+1}(s+1) \leq Q^{\mathsf{max}}_{i+1}(s + 1) \leq Q^{\mathsf{max}}_{i}(s) =   Q_i^{m+1}(s),$$
while the second inequality on the second line in (\ref{A1E9}) from
$$Q_i^{m+1}(s) =  Q^{\mathsf{max}}_{i}(s) \leq Q^{\mathsf{max}}_{i-1}(s) = Q_{i-1}^{m+1}(s-1),$$
where as before $Q_0(a) = Q_0^{\mathsf{max}}(a) = \infty$ and $Q_{k+1}(a) = Q_{k+1}^{\mathsf{max}}(a) = g(a)$. This completes the proof of (\ref{A1E9}) and hence our construction of $\mathfrak{Q}^m$.\\

{\bf \raggedleft Step 2.} In this step we construct two Markov chains $\{X^b_n: n \geq 0\}, \{X^t_n: n \geq 0\}$ on the same probability space, taking values in $\Omega_{\ice}(T_0, T_1, \vec{x}\,^b, \vec{y}\,^b, \infty,g^b)$ and $\Omega_{\ice}(T_0, T_1, \vec{x}\,^t, \vec{y}\,^t, \infty,g^t)$, respectively. Let $\mathfrak{Q}^{b, \mathsf{max}} \in \Omega_{\ice}(T_0, T_1, \vec{x}\,^b, \vec{y}\,^b, \infty,g^b)$ and $\mathfrak{Q}^{t, \mathsf{max}} \in \Omega_{\ice}(T_0, T_1, \vec{x}\,^t, \vec{y}\,^t, \infty,g^t)$ be the maximal elements we constructed in Step 1. We then define $X_0^b = \mathfrak{Q}^{b, \mathsf{max}}$ and $X_0^t = \mathfrak{Q}^{t, \mathsf{max}}$, and note that when $n = 0$, we have from (\ref{A1E2}) that
\begin{equation}\label{A1E10}
X_n^b(i,s) \geq X_n^t(i,s) \mbox{ for } (i,s) \in \llbracket 1, k \rrbracket \times \llbracket T_0, T_1 \rrbracket.
\end{equation}
We now consider a sequence of i.i.d. uniform points $(A_n,B_n)$ in $\llbracket 1, k \rrbracket \times \llbracket T_0, T_1 \rrbracket$, as well as a sequence of i.i.d. uniform random variables $U_n$ on $(0,1)$. We consider the following update for $X_n^t$ and $X_n^b$. 
\begin{itemize}
\item If $B_n \in \{T_0, T_1\}$, then set $X_{n+1}^b = X_n^b$ and $X_{n+1}^t = X_n^t$.
\item If $B_n \in \llbracket T_0 + 1, T_1 - 1 \rrbracket$, then set 
\begin{equation}\label{A1E11}
X_{n+1}^{b/t}(i,s) = \begin{cases} X_{n}^{b/t}(i,s) &\mbox{ if } (i,s) \neq (A_n, B_n), \\
C^{b/t}_n + \lfloor U_n \cdot (D^{b/t}_n - C^{b/t}_n + 1) \rfloor, &\mbox{ if } (i,s) = (A_n,B_n),
\end{cases}
\end{equation}
where $C^{b/t}_n = \max(X_n^{b/t}(A_n + 1, B_n + 1), X_n^{b/t}(A_n, B_n - 1))$ and $D^{b/t}_n = \min(X_n^{b/t}(A_n-1, B_n - 1),(X_n^{b/t}(A_n, B_n + 1)  )$.
\end{itemize}
In words, at each step we pick a coordinate $(A_n, B_n)$ for $X_n^{b/t}$ to update uniformly at random. If that coordinate is on the side, i.e. $B_n \in \{T_0, T_1\}$, then we do not change anything. If that coordinate is in $\llbracket 1, k \rrbracket \times \llbracket T_0 + 1, T_1 +1 \rrbracket$, then we pick a uniform value in $\llbracket C_n^{b/t},D_n^{b/t} \rrbracket$ and change $X_{n+1}^{b/t}(A_n,B_n)$ to this value. The interval $\llbracket C_n^{b/t},D_n^{b/t} \rrbracket$ is precisely the interval of possible values of $X_{n+1}^{b/t}(A_n,B_n)$ that maintains monotonicity of the paths and the interlacing conditions. We finally mention that our convention is that $X_n^{b/t}(0,a) = \infty$, while $X_n^{b/t}(k+1,a) = g^{b/t}(a)$.\\

The above specifies our dynamics, and it is clear that separately $\{X^b_n: n \geq 0\}, \{X^t_n: n \geq 0\}$ are Markov in their own filtrations. We next show that (\ref{A1E10}) holds for all $n \geq 0$. Since by construction it holds when $n = 0$, it suffices to show that the update rule (\ref{A1E11}) maintains it. Assuming that (\ref{A1E10}) holds at $n$, we note that $C^{t}_n \geq C^{b}_n$, and $D^{t}_n \geq D^{b}_n$. Combining the latter with (\ref{A1E11}), we see that to show (\ref{A1E10}) holds at $n+1$ it suffices to show that for any integers $a,b,c,d$ with $b\geq a$, $d \geq c$, $c \geq a$, $d \geq b$ and $x \in (0,1)$ we have  
\begin{equation}\label{A1E12}
c + \lfloor x \cdot (d - c + 1) \rfloor \geq a + \lfloor x \cdot (b - a + 1) \rfloor.
\end{equation}
The latter can be seen from the following simple inequalities
$$a + \lfloor x \cdot (b - a + 1) \rfloor \leq a + \lfloor x \cdot (d - a + 1) \rfloor \mbox{ and } a + \lfloor x \cdot (d - a + 1) \rfloor \leq c + \lfloor x \cdot (d - c + 1) \rfloor,$$
where the second one holds as $\lfloor x (p + q) \rfloor \leq \lfloor xp \rfloor + q$ for any $p,q \in \mathbb{Z}_{\geq 0}$ and $x \in (0,1)$.\\

The last thing we show is that the Markov chains $\{X^b_n: n \geq 0\}, \{X^t_n: n \geq 0\}$ are irreducible, aperiodic and have invariant distributions given by the uniform distributions on the sets $\Omega_{\ice}(T_0, T_1, \vec{x}\,^b, \vec{y}\,^b, \infty,g^b)$ and $\Omega_{\ice}(T_0, T_1, \vec{x}\,^t, \vec{y}\,^t, \infty,g^t)$, respectively. We proceed to verify the three conditions and to ease notation we drop the superscripts $b,t$.

From (\ref{A1E11}) we have for each $\mathfrak{Q} \in \Omega_{\ice}(T_0, T_1, \vec{x}, \vec{y}, \infty,g)$ that 
$$\mathbb{P}(X_{n+1} = \mathfrak{Q} | X_{n} = \mathfrak{Q}) > 0,$$
and so the chain is aperiodic. In addition, if $\mathfrak{Q} \in \Omega_{\ice}(T_0, T_1, \vec{x}, \vec{y}, \infty,g)$, and $\mathfrak{Q}^{\mathsf{max}}$ is as in Step 1, while $\mathfrak{Q}^{m}$ satisfy (\ref{A1E7}) we have from (\ref{A1E11}) that 
$$\mathbb{P}(X_{n+1} = \mathfrak{Q}^{m+1} | X_{n} = \mathfrak{Q}^m) > 0 \mbox{ and } \mathbb{P}(X_{n+1} = \mathfrak{Q}^{m} | X_{n} = \mathfrak{Q}^{m+1}) > 0.$$
In particular, the chain is irreducible. Finally, from (\ref{A1E11}) we have for $\mathfrak{Q}, \mathfrak{Q}' \in \Omega_{\ice}(T_0, T_1, \vec{x}, \vec{y}, \infty,g)$ 
$$\mathbb{P}(X_{n+1} = \mathfrak{Q}' | X_{n} = \mathfrak{Q}) = \mathbb{P}(X_{n+1} = \mathfrak{Q} | X_{n} = \mathfrak{Q}').$$
The latter shows that the transition matrix for $X_n$ is doubly stochastic and so the uniform measure on $ \Omega_{\ice}(T_0, T_1, \vec{x}, \vec{y}, \infty,g)$ is invariant.\\

{\bf \raggedleft Step 3.} From our work in the last two paragraphs in Step 2, and \cite[Theorem 1.8.3]{Norris} we conclude that $\{X^b_n: n \geq 0\}$ and $\{X^t_n: n \geq 0\}$ weakly converge to $\mathbb{P}_{\ice,\operatorname{Geom}}^{T_0, T_1, \vec{x}\,^b, \vec{y}\,^b, \infty, g^b}$ and  $\mathbb{P}_{\ice,\operatorname{Geom}}^{T_0, T_1, \vec{x}\,^t, \vec{y}\,^t, \infty, g^t}$, respectively. In particular, $\{X^b_n: n \geq 0\}$ and $\{X^t_n: n \geq 0\}$ are tight sequences, and then so is $\{(X^b_n, X^t_n): n \geq 0\}$. By Prohorov's theorem, we conclude that $\{(X^b_n, X^t_n): n \geq 0\}$ is relatively compact, and suppose that $\{(X^b_{n_m}, X^t_{n_m}): m \geq 0\}$ is a weakly convergent subsequence. By the Skorohod representation theorem, \cite[Theorem 6.7]{Bill}, we can assume that this sequence is defined on the same probability space $(\Omega, \mathcal{F}, \mathbb{P})$ and the convergence is for each $\omega \in \Omega$. Denoting the limit by $(X^b_{\infty}, X^t_{\infty})$ we see that $X^b_{\infty}$ has law $\mathbb{P}_{\ice,\operatorname{Geom}}^{T_0, T_1, \vec{x}\,^b, \vec{y}\,^b, \infty, g^b}$, while $X^t_{\infty}$ has law $\mathbb{P}_{\ice,\operatorname{Geom}}^{T_0, T_1, \vec{x}\,^t, \vec{y}\,^t, \infty, g^t}$. From (\ref{A1E10}) we know that 
$$X^t_{n_m}(i,s) \geq X^b_{n_m}(i,s) \mbox{ for } (i,s) \in \llbracket 1, k \rrbracket \times \llbracket T_0, T_1 \rrbracket.$$
Taking the limit as $m \rightarrow \infty$, we conclude 
$$X^t_{\infty}(i,s) \geq X^b_{\infty}(i,s) \mbox{ for } (i,s) \in \llbracket 1, k \rrbracket \times \llbracket T_0, T_1 \rrbracket.$$
In particular, we see that $(\mathfrak{Q}^b, \mathfrak{Q}^t) = (X^b_{\infty}, X^t_{\infty})$ satisfy the conditions of the lemma.

%
%
\subsection{Proof of Lemma \ref{lem:RW}} \label{AppendixA2} We start with a simple statement we will require, which is an immediate consequence of \cite[Corollary 2.9]{CorHamA}.

\begin{lemma}\label{NoTouch} Fix $a,b,x,y \in \mathbb{R}$ with $a < b$ and a continuous function $f \in C([a,b])$ such that $f(a) > x$ and $f(b) > y$. Let $B$ be a Brownian bridge from $B(a) = x$ to $B(b) = y$ with variance $1$ as in (\ref{S22E3}). Define the events $C = \{ B(t) > f(t) \mbox{ for some $t \in [a,b]$}\}$ (crossing) and $T = \{ B(t) = f(t) \mbox{ for some } t\in [a,b]\}$ (touching). Then, $\mathbb{P}(T \cap C^c) = 0.$
\end{lemma}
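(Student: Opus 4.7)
The plan is to reduce to the case of a standard Brownian bridge on $[0,1]$, for which the desired statement is precisely (or an immediate consequence of) \cite[Corollary 2.9]{CorHamA}, using the same homeomorphism device employed in the proof of Lemma \ref{CHL}. Specifically, let $F : C([0,1]) \to C([a,b])$ be the map
\[
F(g)(t) = (b-a)^{1/2} \cdot g\!\left(\frac{t-a}{b-a}\right) + \left(\frac{b-t}{b-a}\right) x + \left(\frac{t-a}{b-a}\right) y,
\]
which is a homeomorphism whose inverse sends the Brownian bridge $B$ to a standard Brownian bridge $\tilde B := F^{-1}(B)$ on $[0,1]$ with $\tilde B(0) = \tilde B(1) = 0$.

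Set $\tilde f := F^{-1}(f) \in C([0,1])$, so that
\[
\tilde f(s) = (b-a)^{-1/2}\bigl[f(a + s(b-a)) - (1-s)x - sy\bigr].
\]
By the hypotheses $f(a) > x$ and $f(b) > y$ we have $\tilde f(0) > 0$ and $\tilde f(1) > 0$. The key observation is that because $F$ is obtained by multiplying the input by a positive constant and then adding a deterministic affine function, $F$ preserves pointwise strict inequalities and equalities: for any $t \in [a,b]$ with $s = (t-a)/(b-a)$,
\[
B(t) > f(t) \iff \tilde B(s) > \tilde f(s), \qquad B(t) = f(t) \iff \tilde B(s) = \tilde f(s).
\]
Consequently the events $C$ and $T$ in the statement equal the pullbacks under $F$ of the analogous crossing and touching events for the pair $(\tilde B, \tilde f)$ on $[0,1]$.

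This reduces the claim to showing that for a standard Brownian bridge $\tilde B$ on $[0,1]$ and a continuous function $\tilde f$ with $\tilde f(0), \tilde f(1) > 0$, the probability of touching $\tilde f$ without crossing it is zero. This is precisely the content of \cite[Corollary 2.9]{CorHamA} (or is an immediate consequence of it, together with the fact that on any compact subinterval of $(0,1)$ the Brownian bridge is mutually absolutely continuous with respect to Brownian motion). The only genuine checking needed is that $F$ indeed carries $B$ to a standard Brownian bridge and preserves the relevant inequalities, both of which are direct from the definition; I do not anticipate any substantive obstacle.
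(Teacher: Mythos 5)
Your proposal is correct and matches the paper's own proof essentially verbatim: both conjugate by the affine homeomorphism $F$ to reduce to a standard Brownian bridge on $[0,1]$ and then invoke \cite[Corollary 2.9]{CorHamA}. No further comment is needed.
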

\begin{proof} Similarly to the proof of Lemma \ref{CHL}, we let $F: C([0,1]) \rightarrow C([a,b])$ be given by 
$$F(g)(t) = (b-a)^{1/2} \cdot g \left(\frac{t-a}{b-a} \right) + \left( \frac{b-t}{b-a} \right) \cdot x + \left( \frac{t-a}{b-a} \right) \cdot y,$$
and note that $F$ is a diffeomorphism, and $F^{-1}(B)$ is a standard Brownian motion. Defining $\tilde{C} = \{ F^{-1}(B)(t) > F^{-1}(f)(t) \mbox{ for some $t \in [0,1]$}\}$, and $\tilde{T} = \{ F^{-1}(B)(t) = F^{-1}(f)(t) \mbox{ for some } t\in [0,1]\}$, we have $\mathbb{P}(T \cap C^c)  = \mathbb{P}(\tilde{T} \cap \tilde{C}^c)$. The latter is equal to zero by \cite[Corollary 2.9]{CorHamA}.
\end{proof}

\begin{proof}[Proof of Lemma \ref{lem:RW}] The proof we present is similar to that of \cite[Lemma 4.6]{DEA21}. For clarity we split the proof into three steps. In the first step we establish the result when $k = 1$, $f = \infty$ and $g = -\infty$. In the second step we construct $N_0$ in the first statement of the lemma, and in the third step we prove the second statement of the lemma. \\

{\bf \raggedleft Step 1.} In this step we assume that $k = 1$, $f = \infty$ and $g = -\infty$. We note that since $p > 0$ and $d_n \rightarrow \infty$, we can find $N_0 \in \mathbb{N}$ such that for $n \geq N_0$ we have $X_1^n \leq Y_1^n$, $F_n = \infty$, $G_n = - \infty$. In particular, for $n \geq N_0$ we have that $\Omega_{\ice}(A_n, B_n, X_1^n, Y_1^n, \infty, -\infty)$ is non-empty.

Let us set 
\begin{equation}\label{A2E1}
a_n = d_n^{-1}A_n, \hspace{2mm} b_n = d_n^{-1} B_n, \hspace{2mm} x_1^n = \sigma^{-1} d_n^{-1/2}(X_1^n - pA_n), \hspace{2mm} y_1^n = \sigma^{-1} d_n^{-1/2}(Y_1^n - pB_n).
\end{equation}
Let $\tilde{B}$ be a standard Brownian bridge on $[0,1]$, and define random elements $B^n$, $B$ in $C([a_n, b_n])$ and $C([a,b])$, respectively, via
\begin{equation}\label{A2E2}
\begin{split}
&B^n(t) = \sqrt{b_n - a_n } \cdot \tilde{B} \left( \frac{t - a_n}{b_n - a_n} \right) + \frac{t - a_n}{b_n - a_n} \cdot y_1^n + \frac{b_n - t}{b_n - a_n} \cdot x_1^n \\
&B(t) = \sqrt{b - a } \cdot \tilde{B} \left( \frac{t - a}{b - a} \right) + \frac{t - a}{b - a} \cdot y_1 + \frac{b - t}{b - a} \cdot x_1.
\end{split}
\end{equation}
Note that $B$ has law $\mathbb{P}_{\operatorname{avoid}}^{a,b,x_1,y_1,\infty, - \infty}$, which is the same as $\mathbb{P}_{\operatorname{free}}^{a,b,x_1, y_1}$, and $B^n\vert_{[a,b]} \Rightarrow B$ as random elements in $C([a,b])$ as $n \rightarrow \infty$ (the convergence is actually almost sure).

On the other hand, by Proposition \ref{KMT} (applied to $A = \sigma \cdot (b-a)^{-1/2} \cdot (|x_1 - y_1| + 1)$, $p$ as in the present lemma, $n = B_n - A_n$, $z = Y_1^n - X_1^n$), we have for any $\epsilon > 0$ and all large $n$ that there is a probability space that supports $\tilde{B}$ and $\tilde{Q}_1^n$ with law $\mathbb{P}_{ \operatorname{Geom}}^{0,B_n - A_n, 0, Y_1^n - X_1^n}$ such that
\begin{equation*}
\mathbb{P}\left( \sup_{t \in [0, B_n - A_n]}  \left| \sqrt{B_n - A_n} \cdot \sigma \tilde{B}\left( \frac{t}{B_n - A_n}\right) + \frac{t(Y_1^n - X_1^n) }{B_n - A_n}  - \tilde{Q}_1^n(t)  \right| > (B_n - A_n)^{1/4} \right) < \epsilon.
\end{equation*}
By translating $\tilde{Q}_1^n$ and using (\ref{A2E2}) we conclude that there is a probability space that supports $B^n$ and $Q_1^n$ with law $\mathbb{P}_{\operatorname{Geom}}^{A_n,B_n , X_1^n, Y_1^n} = \mathbb{P}_{\ice, \operatorname{Geom}}^{A_n,B_n , X_1^n, Y_1^n, -\infty, \infty}$ such that
\begin{equation}\label{A2E3}
\mathbb{P}\left( \sup_{t \in [A_n, B_n]}  \left|  \sigma \cdot d_n^{1/2} \cdot B^n (t/d_n)  - Q_1^n(t) + tp  \right| > (B_n - A_n)^{1/4} \right) < \epsilon.
\end{equation}
From (\ref{A2E3}) we conclude that if $\mathcal{Q}_1^n$ is as in (\ref{S22E5}), then we can couple it with $B^n$ so that
\begin{equation}\label{A2E4}
\lim_{n \rightarrow \infty} \mathbb{P} \left(\sup_{t \in [a,b]} \left| B^n(t) - \mathcal{Q}_1^n(t) \right| > \delta \right) = 0
\end{equation}
for any $\delta > 0$. From (\ref{A2E4}), $B^n\vert_{[a,b]} \Rightarrow B$, and the convergence together lemma, see \cite[Theorem 3.1]{Bill}, we conclude $\mathcal{Q}_1^n \Rightarrow B$, which proves the second statement of the lemma.\\

{\bf \raggedleft Step 2.} In this step we find $N_0$ as in the first part of the lemma. For $u_1, u_2 \in C([a,b])$, we write $\rho(u_1, u_2) = \sup_{x \in [a,b]} |u_1(x)- u_2(x)|$.

Note that we can find $\epsilon > 0$ and $h_1, \dots, h_k: C([a,b])$, depending on $a,b,\vec{x}, \vec{y}, f,g$ with $h_i(a) = x_i$, $h_i(b) = y_i$ for $i \in \llbracket 1, k \rrbracket$ such that if $u_i \in C([a,b])$ satisfy $\rho(u_i, h_i) < \epsilon$, then 
\begin{equation}\label{A2E5}
f(x) - \epsilon > u_1(x) + \epsilon > u_1(x) - \epsilon > \cdots > u_k(x) + \epsilon > u_k(x) - \epsilon > g(x) + \epsilon 
\end{equation}
for all $x \in [a,b]$. By Lemma \ref{CHL} for $\{B_i\}_{i = 1}^k$ with law $\mathbb{P}_{\operatorname{free}}^{a,b, \vec{x}, \vec{y}}$ we have
\begin{equation}\label{A2E6}
\mathbb{P}_{\operatorname{free}}^{a,b,\vec{x}, \vec{y}} \left( \rho(B_i, h_i) < \epsilon \mbox{ for } i \in \llbracket 1, k \rrbracket \right) > 0.
\end{equation}
From Step 1 we have for all large $n$ that the law $\mathbb{P}_{\mathsf{Geom}}^{A_n, B_n, X_i^n, Y_i^n}$ is well-defined. Furthermore, from the work in that step we have that if $\tilde{\mathfrak{Q}}^n = \{\tilde{Q}^n_i\}_{i = 1}^k$ have law $\mathbb{P}_{\mathsf{Geom}}^{A_n, B_n, \vec{X}^n, \vec{Y}^n}$, then the line ensembles $\tilde{\mathcal{Q}}^n$ on $[a,b]$ defined as
\begin{equation}\label{A2E7}
\tilde{\mathcal{Q}}_i^n(s) = \sigma^{-1} d_n^{-1/2} \cdot \left( \tilde{Q}^n_i(td_n) - p td_n \right)
\end{equation}
converge weakly to $\mathbb{P}_{\operatorname{free}}^{a,b, \vec{x}, \vec{y}}$. In view of (\ref{A2E6}) we conclude that there exists $N_1 \in \mathbb{N}$ such that for $n \geq N_1$, we have that $\mathbb{P}_{\mathsf{Geom}}^{A_n, B_n, \vec{X}^n, \vec{Y}^n}$ is well-defined and 
\begin{equation}\label{A2E8}
\begin{split}
&\mathbb{P}_{\mathsf{Geom}}^{A_n, B_n, \vec{X}^n, \vec{Y}^n} ( E_n ) > 0, \mbox{ where } \\
&E_n = \{\tilde{\mathfrak{Q}}^n \in \Omega_{\operatorname{Geom}}(A_n, B_n, \vec{X}^n, \vec{Y}^n): \rho(\tilde{\mathcal{Q}}^n_i, h_i) < \epsilon \mbox{ for } i \in \llbracket 1, k \rrbracket \}.
\end{split}
\end{equation}

In the remainder of this step, we show that there exists $N_2 \in \mathbb{N}$ such that for $n \geq \max(N_1, N_2)$
\begin{equation}\label{A2E9}
\begin{split}
&E_n \subseteq \Omega_{\ice}(A_n, B_n, \vec{X}^n, \vec{Y}^n, F_n, G_n),
\end{split}
\end{equation}
where we recall from (\ref{EventInter}) that 
\begin{equation}\label{A2E10}
\begin{split}
& \Omega_{\ice}(A_n, B_n, \vec{X}^n, \vec{Y}^n, F_n, G_n) = \{\tilde{\mathfrak{Q}}^n \in \Omega_{\operatorname{Geom}}(A_n, B_n, \vec{X}^n, \vec{Y}^n):  \tilde{Q}^n_i(r-1) \geq \tilde{Q}_{i+1}(r) \\
&\mbox{ for all $r \in \llbracket A_n + 1, B_n \rrbracket$ and $i \in \llbracket 0 , k \rrbracket$} \}.
\end{split}
\end{equation}
In (\ref{A2E10}) and below we adopt the convention $\tilde{Q}^n_0(r) = F_n(r)$, $\tilde{Q}_{k+1}^n(r) = G_n(r)$, $\tilde{\mathcal{Q}}^n_{0}(t) = f_n(t)$, and $\tilde{\mathcal{Q}}^n_{k+1}(t) = g_n(t)$. If (\ref{A2E9}) is true, then from (\ref{A2E8}) $E_n \neq \emptyset$ and so $\Omega_{\ice}(A_n, B_n, \vec{X}^n, \vec{Y}^n, F_n, G_n) \neq \emptyset$ as well, establishing the first part of the lemma with $N_0 = \max(N_1, N_2)$.\\

Using (\ref{A2E5}), the continuity of $h_i$, $f$, $g$ and the uniform convergence of $f_n, g_n$ to $f$ and $g$ on $[a,b]$, we can find $N_{2,1} \in \mathbb{N}$ and $\delta > 0$ (depending on these functions and $\epsilon$) such that if $n \geq N_{2,1}$, $x,y \in [a,b]$, $|x-y| \leq \delta$, and $u_i \in C([a,b])$ satisfy $\rho(u_i, h_i) < \epsilon$, we have
\begin{equation}\label{A2E11}
u_i(x) -\epsilon \geq u_{i + 1}(y) \mbox{ for } i \in \llbracket 0, k \rrbracket \mbox{ with $u_0(x) = f_n(x)$ and $u_{k+1}(x) = g_n(x)$ for $x \in [a,b]$}.
\end{equation}
Suppose that $N_{2,2} \in \mathbb{N}$ is such that for $n \geq N_{2,2}$ we have 
\begin{equation}\label{A2E12}
\delta > d_{n}^{-1}, \hspace{2mm} \sigma\epsilon d_n^{1/2} > 2p.
\end{equation}
We then have for $n \geq \max(N_{2,1}, N_{2,2})$, $r, r - 1 \in [a d_n, b d_n]$ and $\tilde{\mathfrak{Q}}^n \in E_n$ that the following holds
\begin{equation}\label{A2E13}
\begin{split}
&\tilde{Q}^n_{i+1}(r) =\sigma d_n^{1/2} \tilde{\mathcal{Q}}^n_{i+1}(r/d_n) + r p \leq \sigma d_n^{1/2} \tilde{\mathcal{Q}}_{i}^n((r-1)/d_n) + rp - \epsilon \sigma d_n^{1/2} \\
&= \tilde{Q}^n_{i}(r-1) + p - \epsilon \sigma d_n^{1/2} \leq \tilde{Q}^n_{i}(r-1) \mbox{ for $i \in \llbracket 0, k \rrbracket$.}
\end{split}
\end{equation}
We mention that the inequality in the first line used (\ref{A2E11}) and the fact that $d_n^{-1} < \delta$ from (\ref{A2E12}), and the inequality on the second line used that $\sigma\epsilon d_n^{1/2} > p$ -- again from (\ref{A2E12}). 

Equation (\ref{A2E13}) verifies the interlacing conditions in (\ref{A2E10}) except when $r = A_n+1$ and $r = B_n$, which we need to check separately. From (\ref{EdgeLim}) and (\ref{SideLim}) we can find $N_{2} \geq \max(N_{2,1}, N_{2,2})$ large enough so that for $n \geq N_{2}$ and $\tilde{\mathfrak{Q}}^n \in \Omega_{\operatorname{Geom}}(A_n, B_n, \vec{X}^n, \vec{Y}^n)$ 
\begin{equation}\label{A2E14}
\begin{split}
&\tilde{Q}^n_i(A_n) = X_i^n \in \left[ (x_i - \epsilon/2)  \sigma d_n^{1/2} + p A_n, (x_i + \epsilon/2) \sigma d_n^{1/2} + p A_n \right], \mbox{ for } i \in \llbracket 1, k \rrbracket  \\
&\tilde{Q}^n_i(B_n) = Y_i^n \in \left[ (y_i - \epsilon/2)  \sigma d_n^{1/2} + p B_n, (y_i + \epsilon/2) \sigma d_n^{1/2} + p B_n \right], \mbox{ for } i \in \llbracket 1, k \rrbracket, \\
&F_n(A_n) \geq \sigma d_n^{1/2} (f_n(a) - \epsilon) + p (A_n + 1), \hspace{2mm} G_n(B_n) \leq \sigma d_n^{1/2} (g_n(b) + \epsilon) + p(B_n-1).
\end{split}
\end{equation}
From (\ref{A2E11}) and (\ref{A2E12}) when $n \geq N_2$ and $\tilde{\mathfrak{Q}}^n \in E_n$ we have
\begin{equation}\label{A2E15}
\begin{split}
& \tilde{Q}^n_{i+1}(A_n + 1) \leq \sigma d_n^{1/2}\left( h_{i}(a) - \epsilon \right) + p (A_n + 1) = \sigma d_n^{1/2}\left( x_i - \epsilon \right) + p (A_n + 1)  \mbox{ for } i \in \llbracket 1 , k \rrbracket, \\
& \tilde{Q}^n_{1}(A_n + 1) \leq \sigma d_n^{1/2}\left( f_n(a) - \epsilon \right) + p (A_n + 1).
\end{split}
\end{equation}
Combining (\ref{A2E14}), (\ref{A2E15}), and the fact that $\sigma \epsilon d_n^{1/2} > 2p$ from (\ref{A2E12}), we conclude for $n \geq N_{2}$
\begin{equation*}
\begin{split}
&\tilde{Q}^n_{i+1}(A_n + 1) \leq \sigma d_n^{1/2}\left( x_i - \epsilon \right) + p (A_n + 1) \leq \tilde{Q}_i^n(A_n)  - \sigma d_n^{1/2} (\epsilon/2) + p \leq  \tilde{Q}_i^n(A_n) \mbox{ for } i \in \llbracket 1, k  \rrbracket, \\
&\tilde{Q}^n_{1}(A_n + 1) \leq \sigma d_n^{1/2}\left( f_n(a) - \epsilon \right) + p (A_n + 1) \leq F_n(A_n).
\end{split}
\end{equation*}
One analogously shows for $n \geq N_2$ and $\tilde{\mathfrak{Q}}^n \in E_n$ that
\begin{equation*}
\begin{split}
&\tilde{Q}^n_{i}(B_n - 1) \geq \sigma d_n^{1/2}\left( y_{i+1} + \epsilon \right) + p (B_n-1) \geq \tilde{Q}_{i+1}^n(B_n)  - \sigma d_n^{1/2} (\epsilon/2) - p \geq  \tilde{Q}_i^n(B_n) \\
&\mbox{ for } i \in \llbracket 0, k -1 \rrbracket, \mbox{ and } \tilde{Q}_k^n(B_n-1) \geq \sigma d_n^{1/2}\left( g_n(b) + \epsilon \right) + p(B_n-1) \geq G_n(B_n).
\end{split}
\end{equation*}
The last two displayed equations verify that any $\tilde{\mathfrak{Q}}^n \in E_n$ satisfies the inequalities in (\ref{A2E10}) when $r = A_n + 1$ or $B_n$. These two equations and (\ref{A2E13}) imply that (\ref{A2E9}) holds for $n \geq N_2$.\\

{\bf \raggedleft Step 3.} In this step we establish the second part of the lemma. Define $I: C(\llbracket 1, k \rrbracket \times [a,b]) \rightarrow \mathbb{R}$ by 
\begin{equation}\label{A2E16}
I(\{h_i\}_{i = 1}^k) = {\bf 1}_{E_{\operatorname{avoid}}} \mbox{, where } E_{\operatorname{avoid}} = \{f(s) > h_1(s) > \cdots > h_k(s) > g(s) \mbox{ for } s \in [a,b]\},
\end{equation}
and $I_n: \Omega_{\operatorname{Geom}}(A_n, B_n, \vec{X}^n, \vec{Y}^n) \rightarrow \mathbb{R}$ by
\begin{equation}\label{A2E17}
I_n(\tilde{\mathfrak{Q}}^n) = {\bf 1}\{ \tilde{\mathfrak{Q}}^n \in  \Omega_{\ice}(A_n, B_n, \vec{X}^n, \vec{Y}^n, F_n, G_n) \},
\end{equation}
where we recall that $\Omega_{\ice}(A_n, B_n, \vec{X}^n, \vec{Y}^n, F_n, G_n)$ is as in (\ref{A2E10}). We claim that for any bounded continuous function $H: C(\llbracket 1, k \rrbracket \times [a,b]) \rightarrow \mathbb{R}$ we have
\begin{equation}\label{A2E18}
\lim_n \mathbb{E}_{\mathsf{Geom}}^{A_n, B_n, \vec{X}^n, \vec{Y}^n} \left[ H(\tilde{\mathcal{Q}}^n) \cdot I_n(\tilde{\mathfrak{Q}}^n) \right] = \mathbb{E}_{\operatorname{free}}^{a, b, \vec{x}, \vec{y}} \left[ H(\{B_i\}_{i = 1}^k) \cdot I(\{B_i\}_{i = 1}^k) \right].
\end{equation}
If (\ref{A2E18}) is true, then for any bounded continuous $H: C(\llbracket 1, k \rrbracket \times [a,b]) \rightarrow \mathbb{R}$ we have
\begin{equation*}
\begin{split}
&\lim_n \mathbb{E}_{\ice, \operatorname{Geom}}^{A_n,B_n, \vec{X}^n, \vec{Y}^n, F_n, G_n} \left[ H(\mathcal{Q}^n) \right] = \lim_n \frac{\mathbb{E}_{\mathsf{Geom}}^{A_n, B_n, \vec{X}^n, \vec{Y}^n} \left[ H(\tilde{\mathcal{Q}}^n) \cdot I_n(\tilde{\mathfrak{Q}}^n) \right]}{\mathbb{E}_{\mathsf{Geom}}^{A_n, B_n, \vec{X}^n, \vec{Y}^n} \left[  I_n(\tilde{\mathfrak{Q}}^n) \right]} \\
&= \frac{\mathbb{E}_{\operatorname{free}}^{a, b, \vec{x}, \vec{y}} \left[ H(\{B_i\}_{i = 1}^k) \cdot I(\{B_i\}_{i = 1}^k) \right] }{\mathbb{E}_{\operatorname{free}}^{a, b, \vec{x}, \vec{y}} \left[  I(\{B_i\}_{i = 1}^k) \right]} = \mathbb{E}_{\operatorname{avoid}}^{a, b, \vec{x}, \vec{y},f,g} \left[ H(\{B_i\}_{i = 1}^k) \right],
\end{split}
\end{equation*}
which establishes the second part of the lemma. We remark that $\mathbb{E}_{\operatorname{free}}^{a, b, \vec{x}, \vec{y}} \left[  I(\{B_i\}_{i = 1}^k) \right] > 0$ in view of (\ref{A2E5}) and (\ref{A2E6}). In the remainder we establish (\ref{A2E18}).\\

Define $E_{\operatorname{cross}} \subset C(\llbracket 1, k \rrbracket \times [a,b])$ by
\begin{equation}\label{A2E19}
E_{\operatorname{cross}} = \{\{h_i\}_{i = 1}^k: h_{i+1}(s) > h_{i}(s) \mbox{ for some } s \in [a,b] \mbox{ and } i \in \llbracket 0, k \rrbracket \},
\end{equation}
where $h_0(s) = f(s)$ and $h_{k+1}(s) = g(s)$, and note that from Lemma \ref{NoTouch} we have
\begin{equation}\label{A2E20}
\mathbb{P}_{\operatorname{free}}^{a,b, \vec{x}, \vec{y}}\left( \{B_i\}_{i = 1}^k \in E_{\operatorname{cross}} \cup E_{\operatorname{avoid}} \right) = 1.
\end{equation}
From Step 1 we know that if $\tilde{\mathfrak{Q}}^n$ have laws $\mathbb{P}_{\mathsf{Geom}}^{A_n, B_n, \vec{X}^n, \vec{Y}^n}$ and $\tilde{\mathcal{Q}}^n = \{\tilde{\mathcal{Q}}_i^n \}_{i = 1}^k$ are as in (\ref{A2E7}), then $\tilde{\mathcal{Q}}^n$ converge weakly to $\mathbb{P}_{\operatorname{free}}^{a,b, \vec{x}, \vec{y}}$. By the Skorohod representation theorem, \cite[Theorem 6.7]{Bill}, we can find a probability space $(\Omega, \mathcal{F}, \mathbb{P})$ that supports $\tilde{\mathfrak{Q}}^n$ and $\{B_i\}_{i = 1}^k$ with law $\mathbb{P}_{\operatorname{free}}^{a,b, \vec{x}, \vec{y}}$ such that for each $\omega \in \Omega$ and $i \in \llbracket 1, k \rrbracket$
\begin{equation}\label{A2E21}
\lim_{n} \sup_{t \in [a,b]} \left| \tilde{\mathcal{Q}}^n_i(s) - B_i(s) \right| = 0.
\end{equation}
From the continuity of $H$ and (\ref{A2E21}) we have for each $\omega \in \Omega$
\begin{equation}\label{A2E22}
\lim_{n} H(\tilde{\mathcal{Q}}^n) = H(\{B_i\}_{i = 1}^k).
\end{equation}
We claim that 
\begin{equation}\label{A2E23}
\begin{split}
&\lim_{n} I_n(\tilde{\mathfrak{Q}}^n) = 0 \mbox{ if } \omega \in \Omega \mbox{ is such that } \{B_i\}_{i = 1}^k \in E_{\operatorname{cross}}, \\
&\lim_{n} I_n(\tilde{\mathfrak{Q}}^n) = 1 \mbox{ if } \omega \in \Omega \mbox{ is such that } \{B_i\}_{i = 1}^k \in E_{\operatorname{avoid}}.
\end{split}
\end{equation}
If (\ref{A2E23}) is true, then from (\ref{A2E20}) we would conclude
$$\lim_{n} I_n(\tilde{\mathfrak{Q}}^n) = I(\{B_i\}_{i = 1}^k) \hspace{3mm} \mbox{ $\mathbb{P}$- a.s.},$$
which together with (\ref{A2E22}) and the bounded convergence theorem would imply (\ref{A2E18}). We have thus reduced the proof of the lemma to establishing (\ref{A2E23}).\\

Suppose first that $\omega \in \Omega$ is such that $\{B_i\}_{i = 1}^k \in E_{\operatorname{cross}}$. Denoting $B_0(s) = f(s)$ and $B_{k+1}(s) = g(s)$, we can find $i_0 \in \llbracket 0, k \rrbracket$, $t_0 \in [a,b]$ and $\epsilon > 0$ (depending on $\omega$) such that
\begin{equation*}
B_{i_0+1}(t_0) > B_{i_0}(t_0) + 2\epsilon.
\end{equation*}
Using the continuity of $B_i$ for $i \in \llbracket 0, k +1 \rrbracket$, we can find $\delta > 0$ such that for $x_0,y_0 \in [t_0 - \delta, t_0 + \delta] \cap [a,b]$
\begin{equation*}
B_{i_0+1}(x_0) > B_{i_0}(y_0) + \epsilon.
\end{equation*}
Finally, using (\ref{A2E21}) and the fact that $f_n, g_n$ converge to $f,g$ uniformly, we have for all large $n$, depending on $\omega$, and $x_0,y_0 \in [t_0 - \delta, t_0 + \delta] \cap [a,b]$ that
\begin{equation}\label{A2E24}
\tilde{\mathcal{Q}}^n_{i_0+1}(x_0) > \tilde{\mathcal{Q}}^n_{i_0}(y_0),
\end{equation}
where as earlier in Step 2 we have set $\tilde{\mathcal{Q}}^n_{0}(t) = f_n(t)$, and $\tilde{\mathcal{Q}}^n_{k+1}(t) = g_n(t)$. From equation (\ref{A2E24}) we conclude that if $n$ is large enough so that $3d_n^{-1} < \delta$ we can find $r \in \mathbb{Z}$ such that $r d_n^{-1}, (r-1)d_n^{-1} \in [t_0 - \delta, t_0 + \delta] \cap [a,b]$ and
\begin{equation*}
\begin{split}
&\tilde{Q}^n_{i_0 +1}(r) = \sigma d_n^{1/2} \tilde{\mathcal{Q}}^n_{i_0+1}(r d_n^{-1}) + pr >  \sigma d_n^{1/2} \tilde{\mathcal{Q}}^n_{i_0}((r-1) d_n^{-1})  + pr  = \tilde{Q}^n_{i_0 }(r-1) + p,
\end{split}
\end{equation*}
where we recall from Step 2 that $\tilde{Q}^n_0(r) = F_n(r)$, $\tilde{Q}_{k+1}^n(r) = G_n(r)$, $\tilde{\mathcal{Q}}^n_{0}(t) = f_n(t)$. The last inequality shows that for all large $n$ the inequality at at $i = i_0$ and $r$ in (\ref{A2E10}) is violated. Consequently, $\tilde{\mathfrak{Q}}^n \not \in \Omega_{\ice}(A_n, B_n, \vec{X}^n, \vec{Y}^n, F_n, G_n)$ for all large $n$, establishing the first line in (\ref{A2E23}).\\

Lastly, suppose that $\omega \in \Omega$ is such that $\{B_i\}_{i = 1}^k \in E_{\operatorname{avoid}}$. We can find $\epsilon > 0$, depending on $\omega$ such that if $u_i\in C([a,b])$ and $\rho(B_i, h_i) < \epsilon$, then 
\begin{equation}\label{A2E25}
f(x) - \epsilon > u_1(x) + \epsilon > u_1(x) - \epsilon > \cdots > u_k(x) + \epsilon > u_k(x) - \epsilon > g(x) + \epsilon. 
\end{equation}
From (\ref{A2E21}) we know that $\tilde{\mathfrak{Q}}^n \in E_n$ for all large $n$, where $E_n$ is as in (\ref{A2E8}) with $h_i$ replaced with $B_i$ for $i \in \llbracket 1,k \rrbracket$. From (\ref{A2E9}) we conclude that $\tilde{\mathfrak{Q}}^n \in E_n \subseteq \Omega_{\ice}(A_n, B_n, \vec{X}^n, \vec{Y}^n, F_n, G_n) $ for all large $n$, which proves the second line in (\ref{A2E23}) and hence the lemma.
\end{proof}

%
\subsection{Proof of Lemmas \ref{Lem.FinEnsSatGB} and \ref{Lem.StrongGP}} \label{AppendixA3} 

\begin{proof}[Proof of Lemma \ref{Lem.FinEnsSatGB}] The fact that $\mathfrak{L}$ is interlacing follows directly from its definition. Suppose that $K = \llbracket k_1, k_2 \rrbracket \subseteq \llbracket 1, k \rrbracket$ and $a,b \in \llbracket T_0, T_1 \rrbracket$ with $a < b$. In addition, suppose that $\tilde{f}, \tilde{g}$ are two increasing paths drawn in $\{ (r,z) \in \mathbb{Z}^2 : a \leq r \leq b\}$ and $\vec{u}, \vec{v} \in \mathfrak{W}_m$ with $m = k_2-k_1+1$ altogether satisfy that $\mathbb{P}(A) > 0$ where $A$ denotes the event 
$$A =\{ \vec{u} = ({L}_{k_1}(a), \dots, {L}_{k_2}(a)), \vec{v} = ({L}_{k_1}(b), \dots, {L}_{k_2}(b)), L_{k_1-1} \llbracket a,b \rrbracket = \tilde{f}, L_{k_2+1} \llbracket a,b \rrbracket = \tilde{g} \}.$$
As usual, if $k_1 = 1$, we adopt the convention $\tilde{f} = \infty = L_0$. Then, we seek to show that for any $\vec{B} = (B_1,\dots, B_m)$, such that $B_i \in \Omega(a, b, u_i , v_i)$ for $i \in \llbracket 1,m\rrbracket$, we have
\begin{equation}\label{Eq.XD1}
\mathbb{P}\left( L_{i + k_1-1}\llbracket a,b \rrbracket = B_{i} \mbox{ for $i \in \llbracket 1, m \rrbracket$} \, \vert  A \, \right) = \mathbb{P}_{\ice, \operatorname{Geom}}^{a,b, \vec{u}, \vec{v}, \tilde{f}, \tilde{g}} \left( \cap_{i = 1}^m\{ Q_i = B_i \} \right).
\end{equation}

Since $\mathfrak{L}$ is interlacing, we conclude for any $\vec{B} \not \in \Omega_{\ice}(a,b, \vec{u},\vec{v},\tilde{f}, \tilde{g})$, that 
\begin{equation}\label{SchurEqRed1}
\mathbb{P}\left( L_{i + k_1-1}\llbracket a,b \rrbracket = B_{i} \mbox{ for $i \in \llbracket 1, k \rrbracket$} \, \vert  A \, \right) = 0.
\end{equation}
Suppose instead that $\vec{B} \in \Omega_{\ice}(a,b, \vec{x},\vec{y},f,g)$. Set $D = \llbracket 1, k+1 \rrbracket \times \llbracket T_0, T_1 \rrbracket \setminus \llbracket k_1, k_2 \rrbracket \times \llbracket a+1, b-1 \rrbracket$, and let $\mathcal{M}$ be the set of vectors $\mu = (\mu_{i,j} \in \mathbb{Z}: (i,j) \in D)$, such that $\mathbb{P}(A \cap \{\mathfrak{L}\vert_{D} = \mu \}) > 0$, where $\mathfrak{L}\vert_{D}$ is the restriction of $\mathfrak{L}$ to $D$. We then have
\begin{equation}\label{SchurEqRed2}
\mathbb{P}\left( L_{i + k_1-1}\llbracket a,b \rrbracket = B_{i} \mbox{ for $i \in \llbracket 1, k \rrbracket$} \, \vert  A \, \right) = \frac{\sum_{\mu \in \mathcal{M}} P(\mu, \vec{B})}{\sum_{\mu \in \mathcal{M}} \sum_{\vec{B}' \in  \Omega_{\ice}(a,b, \vec{u},\vec{v},\tilde{f},\tilde{g})}P(\mu, \vec{B}') }, 
\end{equation}
where 
$$ P(\mu, \vec{B}) =  \mathbb{P}\left( \{L_{i + k_1-1}\llbracket a,b \rrbracket = B_{i} \mbox{ for $i \in \llbracket 1, k \rrbracket$} \} \cap \{\mathfrak{L}_D = \mu \} \right).$$
Each summand in the numerator and denominator in (\ref{SchurEqRed2}) is equal to $|\Omega_{\ice}(T_0, T_1, \vec{x},\vec{y}, \infty, g)|^{-1}$, and so 
\begin{equation}\label{SchurEqRed3}
\mathbb{P}\left( L_{i + k_1-1}\llbracket a,b \rrbracket = B_{i} \mbox{ for $i \in \llbracket 1, k \rrbracket$} \, \vert  A \, \right) = \frac{1}{|\Omega_{\ice}(a,b, \vec{u},\vec{v},\tilde{f},\tilde{g})|}.
\end{equation}
Equations (\ref{SchurEqRed1}) and (\ref{SchurEqRed3}) imply (\ref{Eq.XD1}).
\end{proof}

\begin{proof}[Proof of Lemma \ref{Lem.StrongGP}] As $\mathfrak{L}$ is interlacing, we see that (\ref{SchurEqV2}) holds trivially if $\vec{B} \not \in \Omega_{\ice}(a,b, \vec{x},\vec{y},f, g)$, as both sides are equal to zero. If $\vec{B} \in \Omega_{\ice}(a,b, \vec{x},\vec{y},f, g)$, then (\ref{SchurEqV2}) is equivalent to showing that
\begin{equation}\label{SchurEqV2Red1}
{\bf 1}_{A} \cdot \mathbb{P}\left( L_{i + k_1-1}\llbracket a,b \rrbracket = B_{i} \mbox{ for $i \in \llbracket 1, k \rrbracket$} \, \vert  \mathcal{F}_{\mathrm{ext}} \right) = {\bf 1}_{A} \cdot \frac{1}{|\Omega_{\ice}(a,b, \vec{x},\vec{y},f,g)|}.
\end{equation}
Fix a finite $M \geq k_2+1$, and set $D = \llbracket 1, M \rrbracket \times \llbracket T_0, T_1 \rrbracket \setminus \llbracket k_1, k_2 \rrbracket \times \llbracket a+1, b-1 \rrbracket$. By the defining property of conditional expectation, and the $\pi-\lambda$ theorem, it suffices to show that for any $\mu = (\mu_{i,j}: (i,j) \in D)$, such that $\mathbb{P}(A \cap \{\mathfrak{L}\vert_{D} = \mu \}) > 0$, we have 
\begin{equation}\label{SchurEqV2Red2}
 \mathbb{P}\left(A \cap \{ L_{i + k_1-1}\llbracket a,b \rrbracket = B_{i} \mbox{ for $i \in \llbracket 1, k \rrbracket$} \} \cap \{\mathfrak{L}\vert_{D} = \mu \}  \right) =  \frac{\mathbb{P}(A \cap \{\mathfrak{L}\vert_{D} = \mu \} ) }{|\Omega_{\ice}(a,b, \vec{x},\vec{y},f,g)|}.
\end{equation}
Here, $\mathfrak{L}\vert_{D}$ is the restriction of $\mathfrak{L}$ to $D$.

Let $\mathcal{M}$ be the set of triplets $(\vec{u}, \vec{v}, \tilde{g})$, with $\vec{u}, \vec{v} \in \mathfrak{W}_{M-1}$, and $\tilde{g}: \llbracket T_0, T_1 \rrbracket \rightarrow \mathbb{Z}$ an increasing path, that satisfy
\begin{equation*}
\mathbb{P}(\{ \vec{L}(T_0) = \vec{u}, \vec{L}(T_1) = \vec{v}, L_{M}\llbracket T_0, T_1 \rrbracket = \tilde{g} \} \cap A ) > 0,
\end{equation*}
where $\vec{L}(s) = (L_1(s), \dots, L_{M-1}(s))$. As $\mathfrak{L}$ satisfies the interlacing Gibbs property, we have
\begin{equation*}
\begin{split}
&\mathbb{P}\left(A \cap \{ L_{i + k_1-1}\llbracket a,b \rrbracket = B_{i} \mbox{ for $i \in \llbracket 1, k \rrbracket$} \} \cap \{\mathfrak{L}\vert_{D} = \mu \}  \right) \\
&= \sum_{(\vec{u}, \vec{v}, \tilde{g}) \in \mathcal{M}} \frac{\mathbb{P}( \vec{L}(T_0) = \vec{u}, \vec{L}(T_1) = \vec{v}, L_{M}\llbracket T_0, T_1 \rrbracket = \tilde{g}  )}{|\Omega_{\ice}(T_0,T_1, \vec{u},\vec{v},\infty,\tilde{g})|},
\end{split}
\end{equation*}
and also 
\begin{equation*}
\begin{split}
&\mathbb{P}(A \cap \{\mathfrak{L}\vert_{D} = \mu \} ) =  \sum_{\vec{B}' \in \Omega_{\ice}(a,b, \vec{x},\vec{y},f,g)} \mathbb{P}\left(A \cap \{ L_{i + k_1-1}\llbracket a,b \rrbracket = B'_{i} \mbox{ for $i \in \llbracket 1, k \rrbracket$} \} \cap \{\mathfrak{L}\vert_{D} = \mu \}  \right) \\
&=  \sum_{\vec{B}' \in \Omega_{\ice}(a,b, \vec{x},\vec{y},f,g)} \sum_{(\vec{u}, \vec{v}, \tilde{g}) \in \mathcal{M}} \frac{\mathbb{P}( \vec{L}(T_0) = \vec{u}, \vec{L}(T_1) = \vec{v}, L_{M}\llbracket T_0, T_1 \rrbracket = \tilde{g} )}{|\Omega_{\ice}(T_0,T_1, \vec{u},\vec{v},\infty,\tilde{g})|},
\end{split}
\end{equation*}
The last two displayed equations imply (\ref{SchurEqV2Red2}).
\end{proof}

\end{appendix}

\bibliographystyle{alpha}
\bibliography{PD}

\end{document}